\documentclass[11pt]{amsart}

\usepackage[T1]{fontenc}
\usepackage[utf8]{inputenc}
\usepackage{lmodern}
\usepackage{amsmath,amssymb,mathtools,amsthm}
\usepackage{booktabs,array}
\usepackage{xcolor}
\usepackage{microtype}
\usepackage{hyperref}
\usepackage{url}

\hypersetup{
  colorlinks=true,
  linkcolor=black,
  citecolor=black,
  urlcolor=black,
  pdftitle={Determinantal Kernel Schemes of Matrix Nets and Applications to Positive Maps},
  pdfauthor={Trung Hoa Dinh, Minh Toan Ho, Cong Trinh Le, Trung Dung Vuong}
}
\newcommand{\chg}[1]{#1}
\newcommand{\newrev}[1]{#1}

\newcommand{\Gr}{\operatorname{Gr}}
\newcommand{\rank}{\operatorname{rank}}
\newcommand{\Tr}{\operatorname{Tr}}
\newcommand{\Ad}{\operatorname{Ad}}
\newcommand{\SN}{\operatorname{SN}}
\newcommand{\Span}{\operatorname{span}}
\newcommand{\sat}{\mathrm{sat}}
\newcommand{\dd}{\mathrm d}

\theoremstyle{plain}
\newtheorem{theorem}{Theorem}[section]
\newtheorem{lemma}[theorem]{Lemma}
\newtheorem{proposition}[theorem]{Proposition}
\newtheorem{corollary}[theorem]{Corollary}
\theoremstyle{definition}

\theoremstyle{remark}
\newtheorem{remark}[theorem]{Remark}

\title[Determinantal Kernel Schemes of Matrix Nets]{Determinantal Kernel Schemes of Matrix Nets and Applications to Positive Maps}

\author{Trung Hoa Dinh}
\address{Department of Mathematics, Troy University, Troy, AL 36082, USA}
\email{thdinh@troy.edu}

\author{Minh Toan Ho}
\address{Institute of Mathematics, Vietnam Academy of Science and Technology (VAST), 18 Hoang Quoc Viet, Hanoi, Vietnam}
\email{hmtoan@math.ac.vn}

\author{Cong Trinh Le}
\address{Department of Mathematics and Statistics, Quy Nhon University, 170 An Duong Vuong, Quy Nhon, Vietnam}
\email{lecongtrinh@qnu.edu.vn}
\thanks{Corresponding author: lecongtrinh@qnu.edu.vn}

\author{Trung Dung Vuong}
\address{High School for the Gifted, VNUHCM, 153 Nguyen Chi Thanh, An Dong Ward, Ho Chi Minh City, Vietnam; Vietnam National University Ho Chi Minh City, Linh Xuan Ward, Ho Chi Minh City, Vietnam}
\email{vtdung@ptnk.edu.vn}

\date{September 2026}

\begin{document}

\begin{abstract}
{
For a linear map on a matrix space, we study the projective schemes obtained by intersecting its projectivized kernel with determinantal rank loci. For matrix nets, that is, \chg{three dimensional} matrix spaces, we classify every \chg{positive dimensional} intersection with the \chg{rank one} Segre variety in arbitrary rectangular size. The possibilities are a ruling plane, a smooth conic, two Segre lines from opposite rulings, a reduced Segre line, or a Segre line with one reduced or embedded residual point. The smooth conic case is contained in a $2\times2$ compression. For $a,b\ge3$, the corresponding locus in $\Gr(3,M_{a,b})$ has exactly three irreducible components, whose geometry and intersections are determined explicitly. For nets in $M_3(\mathbb C)$, every finite \chg{rank one} scheme has length at most three; \newrev{the adjugate identity gives an intrinsic determinantal obstruction to the length four case allowed for general systems of plane quadrics.} As an application, a \chg{four parameter} family arising from the merging construction admits exact positivity and decomposability criteria. After normalization, positivity is the unit square and decomposability is the quarter disk. The remaining region is atomic and carries explicit PPT entangled states of birank $(5,5)$ and Schmidt number two. \newrev{The point of this application is that the exact phase boundary is realized on a smooth conic determinantal kernel stratum selected independently by the projective classification.}
}
\end{abstract}
\maketitle

\noindent\chg{\textbf{2020 Mathematics Subject Classification.} Primary 14M12, 15A69; Secondary 14M15, 47B65, 81P40.}

\smallskip
\noindent\chg{\textbf{Keywords.} Matrix nets, determinantal varieties, Segre varieties, positive maps, PPT entanglement.}

\section{Introduction and main results}

Positive maps on matrix algebras are central in operator theory and in the dual description of quantum entanglement. Since Choi's indecomposable map on $M_3(\mathbb C)$, many families have been obtained by changing diagonal parameters or cyclic structure, by merging known maps, or by optimizing entanglement witnesses; see \cite{Bera2025,Choi1975,ChoKyeLee1992,ChruscinskiMarciniakRutkowski2018,Kye1992,MarciniakRutkowski2017,MillerOlkiewicz2015}. A complementary polynomial viewpoint encodes a positive map through its Hermitian biquadratic Choi polynomial; recent applications to indecomposable maps, PPT entanglement, and edge states are developed in \cite{HoLeLeOsaka2026}. The main question here is geometric and precedes positivity: what information is carried by the matrices of small rank inside a kernel?

Write $M_{a,b}=M_{a,b}(\mathbb C)$ and $M_n=M_{n,n}$. For a nonzero linear subspace $K\subset M_{a,b}$, write $\mathbb P(K)$ for its projectivization. When $\dim K=3$, the plane $\mathbb P(K)\simeq\mathbb P^2$ is traditionally called a matrix net. The geometry uses the linear matrix space and its determinantal equations, not matrix multiplication. Thus matrix space refers to an arbitrary linear subspace, matrix net to a space of dimension three, and matrix algebra only to the square algebra $M_n$ in the application to positive maps.

\chg{The rank one matrices form the Segre variety $\mathbb P^{a-1}\times\mathbb P^{b-1}\subset\mathbb P(M_{a,b})$. Hence a matrix net produces a plane section of a fixed projective variety, and its rank one scheme records both the set of rank one matrices in the net and possible nonreduced intersection structure. This reformulation makes it possible to use projective geometry before imposing any positivity assumption on a linear map.}

This viewpoint differs from \chg{the theory of matrix spaces of bounded rank}, where every element of a linear space satisfies a rank bound. Here a general element of $K$ may have maximal rank; only the incidence of $\mathbb P(K)$ with determinantal loci is retained. It is also closely related to tensor geometry. If $T\in\mathbb C^3\otimes\mathbb C^a\otimes\mathbb C^b$ has first flattening
\[
T:(\mathbb C^3)^*\longrightarrow\mathbb C^a\otimes\mathbb C^b
\]
of rank three, then $K=T((\mathbb C^3)^*)$ is a matrix space of dimension three and $\mathbb P(K)$ is the corresponding matrix net. In the $3\times3\times3$ case, the base locus studied by Gesmundo and Keneshlou is precisely the \chg{rank one} scheme of this net \cite{GesmundoKeneshlou2026}. Their main object is the associated collineation variety and \chg{the stratification by tensor orbits}; here the \chg{rank one} scheme itself, including embedded structure and its Grassmannian parameter geometry, is the primary object. Related preserver and bounded rank viewpoints appear in \cite{GesmundoHanLovitz2025,HuangLandsberg2026}.

Two features distinguish this setting from these neighboring viewpoints. First, the adjugate identity \newrev{provides an intrinsic determinantal explanation for the absence of} the \chg{length four base locus stratum} that is available for a general plane quadratic system; \newrev{the argument does not rely on a tensor orbit classification.} Second, recent \chg{exact threshold} results for sparse qutrit maps \chg{\cite{PoderiniEtAl2026}} \newrev{and the anisotropic bistochastic family of Sacchi \cite{Sacchi2026}} show that sparsity alone can produce sharp semialgebraic phase boundaries, including square positivity regions, circular decomposability thresholds, and explicit PPT entangled states. \newrev{Accordingly, our positive map contribution is not the mere occurrence of such a phase diagram: its distinguishing feature is that the boundary is realized on the smooth conic component of an independently classified determinantal kernel locus.}

\chg{We first classify the positive dimensional plane sections and then determine their global parameter spaces.}

\begin{theorem}\label{thm:main-positive-dimensional}
Let $a,b\ge2$, let $K\subset M_{a,b}$ have dimension three, and put
\[
Z_K=\mathbb P(K)\cap\Sigma_{a,b},\qquad
\Sigma_{a,b}=\mathbb P^{a-1}\times\mathbb P^{b-1}\subset\mathbb P(M_{a,b}).
\]
If $\dim Z_K\ge1$, then $Z_K$ is exactly one of the following scheme types: a ruling plane, a smooth conic, two reduced Segre lines from opposite rulings, a reduced Segre line, a reduced Segre line with one reduced point off the line, or a Segre line with {one embedded residual point contributing length one}. A doubled Segre line cannot occur outside a ruling linear space. In the \chg{smooth conic} case, $K$ lies in a $2\times2$ compression and is equivalent to the traceless hyperplane of $M_2(\mathbb C)$.
\end{theorem}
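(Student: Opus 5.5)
Let $C\subset Z_K$ be a one-dimensional irreducible component, taken with its reduced structure. $C$ lies in the plane $\mathbb P(K)$ and also in $\Sigma_{a,b}$. The plan is to show first that $C$ is either a Segre line or a smooth conic, and then to work out the scheme structure in each case.

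\textbf{Step 1: reduce to degree at most two.} A plane curve in the Segre variety of degree $d\ge3$ would force $\mathbb P(K)$ to lie in the linear span of $C$. We will argue directly instead. Choose three distinct points of $C$ spanning $\mathbb P(K)$ (possible unless $C$ is a line); this gives rank one matrices $u_i v_i^T$ spanning $K$. Then
\[
K\subset \Span(u_1,u_2,u_3)\otimes \Span(v_1,v_2,v_3),
\]
so we may compress to $M_{3}$, or smaller. Now restrict $\det$ and the $2\times2$ minors to the net. The rank one locus in $\mathbb P(K)$ is cut out by the $2\times2$ minors, which restrict to quadrics. A curve contained in all of them is therefore contained in the base locus of the quadric system. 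Either the quadrics share a common linear factor (giving a line) or a common conic, since a positive-dimensional common component of plane quadrics has degree at most $2$. So $C$ is a line or a conic. If the conic is reducible, its components are two lines, each of which is a linear space inside a Segre variety. Such a line is a Segre line: it lies in a ruling $\mathbb P^{a-1}\times\{pt\}$ or $\{pt\}\times\mathbb P^{b-1}$. Two coplanar Segre lines meeting in a point either come from the same ruling, which forces the plane into a ruling plane, or from opposite rulings. If the net is a ruling plane, $Z_K=\mathbb P(K)$.

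\textbf{Step 2: the conic case.} A smooth conic in $\Sigma$ projects to curves in $\mathbb P^{a-1}$ and $\mathbb P^{b-1}$ whose degrees sum to $2$. Neither projection can be constant, since then the conic would lie in a ruling linear space and be a line. So both projections are lines, and the conic is the graph of an isomorphism $\mathbb P^1\to\mathbb P^1$. Consequently $K\subset \mathbb C^2\otimes\mathbb C^2$ and $\dim K=3$, so $K$ is a hyperplane $\{X:\Tr(AX)=0\}$ of $M_2$. In $M_2$ the Segre variety is the quadric $\det=0$, and the section is smooth iff $A$ is invertible. Using $X\mapsto XA$ we get equivalence to the traceless hyperplane, and $Z_K$ is then exactly the smooth conic.

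\textbf{Step 3: line cases and residual scheme.} Suppose $Z_K$ contains a Segre line $L$, say $L=\{u\otimes v: v\in W\}$ with $\dim W=2$ and $u$ fixed. Choose $K=\Span(u w_1^T,u w_2^T, M)$. The ideal of $Z_K$ is generated by the $2\times2$ minors of $s\,u\,w(t)^T+zM$. Factoring out $z$ shows that the residual to $L$ is cut out by linear-in-$z$ conditions: the minors are $z\cdot(\text{linear in }t)+z^2(\text{minors of }M)$. After dividing by $z$, the residual scheme is defined by the linear forms coming from the mixed terms together with the minors of $M$.

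We then classify by the rank of $M$ modulo the span of $uW^T$ and by how $M$ interacts with $u$ and $W$:
\begin{itemize}
\item $M\equiv$ rank one, compatible with $u$ or with $W$: this gives a second line, from the opposite ruling, or a ruling plane.
\item Otherwise: the residual is empty (reduced line), a single reduced point, or a length one embedded point at $L$.
\end{itemize}
The last alternative occurs when the mixed linear conditions all vanish at a point of $L$. To exclude a doubled line, we will show that if the ideal is contained in $(z^2)$ up to saturation, then all mixed terms vanish. That forces $M$ to be supported in $u$-row or $W$-column form, hence a ruling plane.

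\textbf{Main obstacle.} The main difficulty is Step 3: carrying out the scheme-theoretic bookkeeping (the saturation and the embedded-point length) uniformly in arbitrary $a,b$. The plan there is to compress to at most a $3\times3$ block containing $u$, $W$ and the relevant columns of $M$. After that the computation becomes a finite normal-form case analysis.
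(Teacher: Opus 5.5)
The main gap is in how you propose to finish Step 3. Your Steps 1 and 2 follow the paper's route, as does the factorization of every restricted minor along a Segre line as $z$ times a form linear in $(s,t,z)$. But Step 3 is exactly where the six scheme types are decided.

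\textbf{Why the Step 3 plan fails.} The reduction to a $3\times3$ block is not available. For $a,b\ge k$, the net $K=\Span\{E_{11},E_{12},E_{22}+\cdots+E_{kk}\}$ contains a Segre line but has left and right supports of dimension $k$. A compression $PKQ$ is a different net, and its rank one scheme need not be that of $K$, since matrices of rank at least two can become rank one.

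Your case split by the rank of $M$ modulo $uW^{\mathsf T}$ does not determine the scheme either. Take $u=e_1$, $W=\Span\{e_1,e_2\}$, and compare $M=E_{22}+E_{33}$ with $M=E_{13}+E_{21}$. In both cases every element of $M+\Span\{E_{11},E_{12}\}$ has rank at least two. Yet the first gives the ideal $z(s,t,z)$, which saturates to the reduced line. The second gives $(tz,z^2)=(z)\cap(t,z^2)$, a line with an embedded point. Your remark that the embedded case is signalled by a common zero of the mixed forms on $L$ is the right criterion. As written, however, the list ``empty, reduced point, embedded point of length one'' is asserted rather than derived.

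\textbf{The missing observation is already in your factorization.} Each minor divided by $z$ is a linear form $\ell_j$ in $(s,t,z)$. So $J=(\ell_j)$ is a linear ideal and $V(J)$ is a linear subspace of $\mathbb P^2$. Splitting by $\dim J_1$ and by whether $z\in J_1$ settles everything uniformly in $a,b$, with no normal forms:
\begin{itemize}
\item $J=0$ gives the ruling plane.
\item $\dim J_1=1$ gives $(z\ell)$: two reduced lines if $\ell\not\propto z$, and $(z^2)$ otherwise.
\item $\dim J_1=3$ gives $(zJ)^{\sat}=(z)$, the reduced line.
\item $\dim J_1=2$ with $z\notin J$ gives $zJ=(z)\cap J$, since $J$ is prime.
\item $\dim J_1=2$ with $z\in J$ gives $zJ=(z^2,zv)=(z)\cap(z^2,v)$. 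Its embedded contribution has length one by the sequence $0\to(R/(z,v))(-1)\to R/(z^2,zv)\to R/(z)\to0$ given by multiplication by $z$.
\end{itemize}

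\textbf{Two smaller corrections.}
\begin{itemize}
\item In the doubled line exclusion, only the alternative $M\in u\otimes\mathbb C^b$ survives. Subtract elements of $\Span\{E_{11},E_{12}\}$ so that $m_{11}=m_{12}=0$. The $z$-coefficients of the minors on rows $1,i$ (with $i\ge2$) are $s\,m_{i2}-t\,m_{i1}$ on columns $1,2$ and $s\,m_{ij}$ on columns $1,j$ (with $j\ge3$). Their vanishing kills rows $2,\dots,a$ of $M$. Support in the $W$-columns would only place $\mathbb P(K)$ in $\mathbb P(\mathbb C^a\otimes W)$, which is not contained in $\Sigma_{a,b}$.
\item In Step 2, a constant projection does not make the conic a line. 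It puts the span $\mathbb P(K)$ of the conic into a ruling linear space, contradicting $\mathbb P(K)\not\subset\Sigma_{a,b}$.
\end{itemize}
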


For $a,b\ge3$, let $\mathcal L_A$ and $\mathcal L_B$ be the two loci of spaces containing a Segre line of the corresponding ruling, and let $\mathcal C$ be the locus of hyperplanes in a $2\times2$ compression.

\begin{theorem}\label{thm:components}
The locus
\[
\mathcal P_{a,b}=\{K\in\Gr(3,M_{a,b}):\dim(\mathbb P(K)\cap\Sigma_{a,b})\ge1\}
\]
has exactly the three irreducible components $\mathcal L_A$, $\mathcal L_B$, and $\mathcal C$, with dimensions
\[
ab+a+2b-8,\qquad ab+2a+b-8,\qquad 2a+2b-5.
\]
Their pairwise intersections coincide with an irreducible locus of dimension $2a+2b-6$. The compression component is smooth, and this common intersection is its relative Segre divisor.
\end{theorem}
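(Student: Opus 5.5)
The plan is to reduce everything to Theorem~\ref{thm:main-positive-dimensional} and then compute the three loci as images of explicit incidence bundles. By that theorem, if $\dim Z_K\ge1$, then either $Z_K$ contains a Segre line or $Z_K$ is a smooth conic. A ruling plane contains Segre lines, and in the conic case $K$ is a hyperplane of a $2\times2$ compression. Hence
\[
\mathcal P_{a,b}=\mathcal L_A\cup\mathcal L_B\cup\mathcal C .
\]
Conversely, each of these loci visibly lies in $\mathcal P_{a,b}$.

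\textbf{The loci $\mathcal L_A$ and $\mathcal L_B$.} Take a Segre line of the first ruling to be $\mathbb P(x\otimes W)$, with $[x]\in\mathbb P^{a-1}$ and $W\in\Gr(2,\mathbb C^b)$. These lines form an irreducible family of dimension $(a-1)+2(b-2)=a+2b-5$. A space $K\supset x\otimes W$ is determined by a line in $M_{a,b}/(x\otimes W)$, which gives a $\mathbb P^{ab-3}$ of choices. The incidence variety is therefore a projective bundle of dimension $ab+a+2b-8$. Its image $\mathcal L_A$ is closed (the map is proper) and irreducible.

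To get the dimension of $\mathcal L_A$, I will show the projection is generically injective. For a general third vector, Theorem~\ref{thm:main-positive-dimensional} leaves $Z_K$ equal to the line itself, possibly with a residual point. So the line is recovered from $K$. The case of $\mathcal L_B$ follows by symmetry, giving dimension $ab+2a+b-8$.

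\textbf{The locus $\mathcal C$.} Parametrize $\mathcal C$ by triples $(U,V,H)$ with $U\in\Gr(2,\mathbb C^a)$, $V\in\Gr(2,\mathbb C^b)$ and $H\in\mathbb P((U\otimes V)^*)\simeq\mathbb P^3$. This is a $\mathbb P^3$-bundle over a smooth irreducible base, of total dimension $2(a-2)+2(b-2)+3=2a+2b-5$.

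The map $(U,V,H)\mapsto H$ is injective. If the column spaces of the elements of $H$ spanned only a line $\langle u\rangle$, then $H\subset u\otimes V$, which has dimension $2<3$, a contradiction. So the column spaces span $U$, and likewise the row spaces span $V$; thus $U$ and $V$ are recovered from $H$.

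To conclude that $\mathcal C$ is smooth, and not just the bijective image of a smooth variety, I must check that the differential is injective. The tangent directions along the base are homomorphisms $U\to\mathbb C^a/U$ and $V\to\mathbb C^b/V$. These move $H$ out of $U\otimes V$ in directions that cannot be absorbed by $\operatorname{Hom}(H,U\otimes V/H)$, again because $H$ spans $U$ and $V$ in both factors. I expect this immersion check to be the main technical obstacle. I would carry it out in the standard chart $U=\langle e_1,e_2\rangle$, $V=\langle f_1,f_2\rangle$.

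\textbf{Intersections and irreducibility.} Let $\mathcal D\subset\mathcal C$ be the relative Segre divisor: the triples with $H=(u\otimes v)^\perp$ for some rank one $u\otimes v$, i.e.\ $H$ degenerate with respect to the determinant form. This is a $\mathbb P^1\times\mathbb P^1$-bundle over $\Gr\times\Gr$, so it is irreducible of dimension $2a+2b-6$.

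Each such $H$ has the form $u'\otimes V+U\otimes v'$, so it contains Segre lines of both rulings. This gives $\mathcal D\subset\mathcal L_A\cap\mathcal L_B\cap\mathcal C$.

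For the reverse inclusions, start with $K\in\mathcal C\setminus\mathcal D$. Such a $K$ is equivalent to the traceless hyperplane, so $Z_K$ is a smooth conic and contains no line; hence $\mathcal C\cap\mathcal L_A=\mathcal C\cap\mathcal L_B=\mathcal D$.

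Now suppose $K\in\mathcal L_A\cap\mathcal L_B$, so $K\supset x\otimes W$ and $K\supset U\otimes y$. If these two planes met only in $0$, they would span a space of dimension $4>3$. So they share a rank one vector, which forces $x\otimes y$ up to scalar, with $x\in U$ and $y\in W$. Then $K=x\otimes W+U\otimes y\subset U\otimes W$ is a degenerate hyperplane, so $K\in\mathcal D$.

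Finally, none of the three loci contains another. For $a,b\ge3$ the dimension of $\mathcal C$ is $2a+2b-5$, which is strictly smaller than that of $\mathcal L_A$ since $ab-a-3>0$. A general member of $\mathcal C$ contains no line, so $\mathcal C\not\subset\mathcal L_A\cup\mathcal L_B$. A general member of $\mathcal L_A$ contains no line of the opposite ruling, so $\mathcal L_A\not\subset\mathcal L_B\cup\mathcal C$, and symmetrically for $\mathcal L_B$. Therefore these are exactly the three irreducible components, with the stated dimensions and common intersection $\mathcal D$.
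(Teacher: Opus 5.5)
Your proposal is correct and follows essentially the paper's argument. It uses the same reduction $\mathcal P_{a,b}=\mathcal L_A\cup\mathcal L_B\cup\mathcal C$ from the classification, the same incidence bundles and dimension counts, the same recovery of $(U,V)$ from left and right supports, and the same identification of all pairwise intersections with the locus $\mathcal D$ of rank one functionals. The differences are minor. The paper certifies generic finiteness of the $\mathcal L_A$ incidence with the explicit example $\Span\{E_{11},E_{12},E_{22}+E_{33}\}$ instead of a genericity argument. It also avoids the immersion check you flag as the main obstacle: the contractions $K\otimes B^*\to A$ and $K\otimes A^*\to B$ have constant rank two on $\mathcal C$, so the support maps are regular morphisms and give an inverse $\mathcal C\to X$ directly. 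Your tangent space check would also go through, for exactly the spanning reason you give.
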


\chg{For $M_3$, the finite part admits a sharper classification than a general quadratic system in a projective plane.}

\begin{theorem}\label{thm:finite-main}
Let $K\subset M_3$ have dimension three. If $Z_K=\mathbb P(K)\cap\Sigma_{3,3}$ is \chg{zero dimensional}, then
\[
\operatorname{length}Z_K\le3.
\]
Up to projective coordinates, the possibilities are the empty scheme, one or two reduced points, one curvilinear double point, three noncollinear reduced points, a curvilinear double point together with a reduced point spanning the plane, \chg{a curvilinear scheme of length three spanning $\mathbb P^2$}, or the planar fat point $\mathfrak m_p^2$. Every type occurs.
\end{theorem}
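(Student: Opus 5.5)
The plan is to work with the $2\times2$ minors as a linear system of plane conics. Choose a basis $A,B,C$ of $K$ and write $X=xA+yB+zC$ for the corresponding $3\times3$ matrix of linear forms on $\mathbb P(K)\simeq\mathbb P^2$. Then $Z_K$ is the subscheme cut out by the nine $2\times2$ minors of $X$. Up to sign these are the entries of $\operatorname{adj}(X)$. Let $V\subset H^0(\mathcal O_{\mathbb P^2}(2))$ be the span of these minors. Then $Z_K$ is the base scheme of $V$, and $V$ is contained in $H^0(\mathcal I_{Z_K}(2))$.

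\textbf{Step 1: reduce to one configuration.} Suppose $Z_K$ is finite of length $\ell\ge4$.
\begin{itemize}
\item If some subscheme of $Z_K$ of length $\ge3$ lies on a line $L$, then by B\'ezout every conic through $Z_K$ contains $L$. Hence $L\subset Z_K$, contradicting finiteness.
\item By the same argument, $Z_K$ contains no subscheme of length $3$ on a line. So a length-four subscheme imposes independent conditions on conics. This gives $\dim H^0(\mathcal I_{Z_K}(2))\le 2$.
\item If $\dim V\le1$, the base locus of $V$ is a conic curve, which is not finite.
\end{itemize}
Therefore $\ell=4$ and $V=\langle q_1,q_2\rangle$ is a pencil with base scheme $Z_K$. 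Consequently $\operatorname{adj}(X)=q_1P+q_2Q$ for constant matrices $P,Q\in M_3$.

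\textbf{Step 2: contradiction from the adjugate identities.} Apply $\det(\operatorname{adj}X)=(\det X)^2$ and $\operatorname{adj}(\operatorname{adj}X)=(\det X)X$ to this form of $\operatorname{adj}(X)$. The first identity gives $(\det X)^2=f(q_1,q_2)$, where $f(s,t)=\det(sP+tQ)$ is a binary cubic.
\begin{itemize}
\item Suppose $\det X\not\equiv0$. Factor $f=\prod\lambda_i(s,t)$ into linear factors. Each $\lambda_i(q_1,q_2)$ is a member of the pencil. Two distinct members share no component, since the base locus is finite. So the sextic $(\det X)^2$ must be a product $c^2c'$ of members with $c'$ a double line, or the cube of a member. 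I will rule these out by a short case analysis.
  \begin{itemize}
  \item For the cube case, a conic cubed has degree $6$ but must be a square, so $c$ is itself a square. That forces a double line of the pencil, with $\det X$ equal to its cube.
  \item In both cases, feed the result into the second identity: $(\det X)X$ is a quadratic expression in $q_1,q_2$. Then $\det X$, a product of lines and conics of the pencil, must divide every entry of $q_1^2R+q_1q_2S+q_2^2T$.
  \item Comparing factors, and using that the base points of the pencil do not lie on the other members, should force $X$ itself to vanish on the base scheme. That contradicts $\dim K=3$, since then a linear matrix would vanish at a point.
  \end{itemize}
\item Suppose $\det X\equiv0$, so $K$ is a space of bounded rank $\le2$. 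Then $\operatorname{adj}X$ has rank $\le1$ and $X\operatorname{adj}X=0$. Hence $(q_1P+q_2Q)$ has rank $\le1$ on the pencil, which forces $P,Q$ to share a row space or a column space. Using $X\cdot\operatorname{adj}X=0$, the image condition then forces all of $X$ to be compressed. I will invoke the classification of $3$-dimensional bounded rank spaces in $M_3$ (compression spaces and the skew-symmetric space). In each of these, $Z_K$ is either positive dimensional or explicitly of length $\le3$; for the skew-symmetric space $Z_K$ is empty.
\end{itemize}
This yields $\ell\le3$.

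\textbf{Step 3: the list of types.} Zero-dimensional schemes in $\mathbb P^2$ of length $\le3$ are classified up to projective equivalence: reduced points, curvilinear schemes, and $\mathfrak m_p^2$. The non-collinearity observation from Step 1 removes the collinear length-three schemes, which leaves exactly the listed spanning types. For existence, I will exhibit an explicit net for each type. The natural starting point is a diagonal net $\operatorname{diag}(x,y,z)$, which gives three reduced points. Degenerating diagonal entries into Jordan-type blocks should give the curvilinear and fat-point types. A generic net gives the empty scheme, since the expected dimension is negative.

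\textbf{Main obstacle.} The hardest part is the nonreduced subcases of Step 2, where the pencil contains a double line or has non-reduced base points. There the factor comparison is delicate. I would first try to handle them uniformly by restricting the identity $X\operatorname{adj}X=(\det X)I$ to the complete-intersection scheme $Z_K=\{q_1=q_2=0\}$. On $Z_K$ it reads $X|_{Z_K}\cdot 0=(\det X)|_{Z_K}$, so $\det X\in(q_1,q_2)$. A length count of $\det X$ modulo the pencil should then finish the argument.
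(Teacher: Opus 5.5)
Your Steps 1 and 3 run parallel to the paper. The paper gets length $\le4$ from B\'ezout for two general members of the span rather than from independence of conditions, and it excludes collinear length-three schemes exactly as you do. Step 2, however, is a genuinely different mechanism.

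The paper uses only $X\operatorname{adj}X=(\det X)I_3$, kept \emph{entrywise}, and argues as follows.
\begin{itemize}
\item Coprime quadrics $q_1,q_2$ have no syzygy with linear coefficients. So the off-diagonal entries, and the differences of diagonal entries, force $XP=\ell_1I_3$ and $XQ=\ell_2I_3$.
\item If $\ell_1\ne0$, then $P$ is invertible and $X=\ell_1P^{-1}$, contradicting $\dim K=3$. Hence $\det X\equiv0$.
\item Then $(\operatorname{adj}X)X=0$ gives $PX=QX=0$. A nonzero column and a nonzero row of $P$ or $Q$ give a common kernel vector and a common cokernel covector, leaving $X$ in a $2\times2$ corner with at most one independent minor.
\end{itemize}
That argument is uniform: no split on $\det X$, no factorization of $\det(sP+tQ)$, no bounded-rank input. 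Your route through $\det\operatorname{adj}X=(\det X)^2$ and $\operatorname{adj}\operatorname{adj}X=(\det X)X$ costs a case analysis. It does close, and it exposes more structure.

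Concretely, the case you flagged as the main obstacle is not one. The only input needed is that non-proportional members of the pencil are coprime, and this holds whether or not $Z_K$ is reduced. Your phrase ``base points do not lie on the other members'' is false as written, since base points lie on every member.
\begin{itemize}
\item If $\det X=\kappa cm$ with $c,m^2$ spanning the pencil, then $\kappa cmX=c^2R+cm^2S+m^4T$. Reducing modulo $c$ gives $T=0$, then modulo $m$ gives $R=0$. So $X=\kappa^{-1}mS$ has one-dimensional coefficient span.
\item The cube case $\det X=\kappa m^3$ is identical in the basis $(m^2,q)$.
\item You omitted the case of three pairwise non-proportional factors. It would need three double lines in one pencil, which is impossible since $ax^2+by^2$ is a square only if $ab=0$.
\end{itemize}

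For $\det X\equiv0$, the Atkinson--Eisenbud--Harris classification is unnecessary. Once $Xu=0$ (or $\lambda X=0$), $K$ lies in a copy of $M_{3,2}$. A finite plane section of $\mathbb P^2\times\mathbb P^1\subset\mathbb P^5$ has length equal to its degree $3$ (e.g.\ by Hilbert--Burch), contradicting length $4$. This needs to be stated and justified, not just asserted.

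Your fallback, $\det X\in(q_1,q_2)$ plus a length count, would not work. That membership is automatic and yields no contradiction; the useful form of the idea is the entrywise version above.

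Finally, ``every type occurs'' still requires explicit nets for the six types other than the empty scheme and three reduced points. Each needs its minor ideal computed and saturated, as in the paper's table.
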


The significance of the bound is stronger than degree counting: a generic pair of plane quadrics may cut a scheme of length four, whereas the minors of a genuine $3\times3$ linear matrix cannot. The proof in Section~\ref{subsec:finite} identifies the adjugate identity as the mechanism excluding this final case.

\chg{The positive map application is based on the following four parameter family:}
\begin{equation}\label{eq:Psi}
\Psi_{p,q;r,t}(X)=
\begin{pmatrix}
 p(x_{11}+x_{22})&0&r x_{13}\\
 0&p(x_{11}+x_{22})&t x_{32}\\
 r x_{31}&t x_{23}&q x_{33}
\end{pmatrix},
\qquad p,q>0,\quad r,t\ge0.
\end{equation}
It is a scalar specialization of the general merging construction in \cite{MarciniakRutkowski2017} followed by a positive diagonal output congruence, as made explicit in Section~\ref{subsec:positive-family}; the parametrization itself is not claimed as new.

\begin{theorem}\label{thm:main-positive-map}
The map $\Psi_{p,q;r,t}$ is positive if and only if
\[
r^2\le pq,\qquad t^2\le pq.
\]
Inside this positivity region,
\[
\Psi_{p,q;r,t}\text{ is decomposable}
\quad\Longleftrightarrow\quad
r^2+t^2\le pq.
\]
If $r^2+t^2>pq$, the map is atomic. Moreover, \(2\)-positivity is equivalent to complete positivity and to $t=0$, while \(2\)-copositivity is equivalent to complete copositivity and to $r=0$. For {$r,t>0$} the kernel is the \chg{smooth conic} normal form from Theorem~\ref{thm:main-positive-dimensional}.
\end{theorem}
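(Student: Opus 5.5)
The plan is to work entirely with the Choi matrix
\[
C=\sum_{i,j}|i\rangle\langle j|\otimes\Psi_{p,q;r,t}(E_{ij}),
\]
whose sparsity decides everything.

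\emph{Kernel.} From \eqref{eq:Psi}, for $r,t>0$ the image is spanned by $E_{11}+E_{22}$, $E_{13}$, $E_{31}$, $E_{23}$, $E_{32}$, $E_{33}$. Hence $\ker\Psi=\Span\{E_{11}-E_{22},E_{12},E_{21}\}$, which is the traceless hyperplane of the upper-left $2\times2$ compression. This is the smooth conic case of Theorem~\ref{thm:main-positive-dimensional}.

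\emph{Positivity.} I would test rank one inputs $X=xx^*$. The output has a zero $(1,2)$ entry, so positive semidefiniteness reduces to the Schur complement at the $(3,3)$ corner. After cancelling $|x_3|^2$ this reads
\[
pq(|x_1|^2+|x_2|^2)\ge r^2|x_1|^2+t^2|x_2|^2 .
\]
This holds for all $x$ if and only if $r^2\le pq$ and $t^2\le pq$.

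\emph{Decomposability, sufficiency.} If $r^2+t^2\le pq$, choose $q_1=r^2/p$ and $q_2=q-q_1\ge t^2/p$. Then write $\Psi=\Phi_1+\Phi_2$ as follows:
\begin{itemize}
\item $\Phi_1$ carries $px_{11}$ at $(1,1)$, the entries $rx_{13}$ and $rx_{31}$, and $q_1x_{33}$. Its Choi block on $|11\rangle,|33\rangle$ is $\begin{pmatrix}p&r\\ r&q_1\end{pmatrix}\ge0$, so $\Phi_1$ is CP.
\item $\Phi_2$ carries $px_{22}$ at $(2,2)$, the transposed entries $tx_{32}$ and $tx_{23}$, and $q_2x_{33}$. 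It is co-CP because $pq_2\ge t^2$.
\item The cross diagonal terms $px_{22}\mapsto(1,1)$ and $px_{11}\mapsto(2,2)$ are of the form $\Ad_{E_{12}}$ and $\Ad_{E_{21}}$. Each is simultaneously CP and co-CP, so they can go in either part.
\end{itemize}

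\emph{Decomposability, necessity.} Suppose $C=A+B^{\Gamma}$ with $A,B\ge0$, where $\Gamma$ is the partial transpose. The diagonal of $C$ vanishes at $|13\rangle,|31\rangle,|23\rangle,|32\rangle$. Diagonal entries are unchanged by $\Gamma$ and are nonnegative, so $A$ and $B$ vanish on these rows and columns. The off-diagonal entry $r|11\rangle\langle33|$ could come from $B^\Gamma$ only through $B_{13,31}$, which is zero. Likewise $t|32\rangle\langle23|$ cannot lie in $A$. Hence $A_{11,33}=r$ and $B_{22,33}=t$. Combining $A_{11,11}\le p$, $B_{22,22}\le p$ and $A_{33,33}+B_{33,33}=q$ with the $2\times2$ minors gives
\[
r^2+t^2\le p\,A_{33,33}+p\,B_{33,33}=pq .
\]

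\emph{$2$-positivity and $2$-copositivity.} The same zero-row argument shows that $2$-positivity forces $t=0$. I would test $\Psi\otimes\mathrm{id}_2$ on the vector $e_3\otimes f_1+e_2\otimes f_2$, which exposes the $2\times2$ minor between $|32\rangle$ (zero diagonal) and $|23\rangle$. Conversely, for $t=0$ the Choi matrix is visibly positive semidefinite. The copositive statements follow by symmetry under composition with the transpose, which exchanges the roles of $r$ and $t$.

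\emph{Atomicity: the main obstacle.} The hardest step is showing atomicity when $r^2+t^2>pq$. Here decompositions into a $2$-positive plus a $2$-copositive part are no longer controlled by zero rows of the Choi matrix. I would instead build an explicit PPT state $\rho$ supported on $|11\rangle,|22\rangle,|33\rangle,|12\rangle,|21\rangle$ and its partial-transpose partners. The aim is that $\rho$ and $\rho^\Gamma$ both have rank $5$ and Schmidt number $2$, while $\Tr(C\rho)<0$ precisely when $r^2+t^2>pq$.
\begin{itemize}
\item Since Schmidt number two states pair nonnegatively with $2$-positive maps, and $\rho^\Gamma$ with $2$-copositive ones, such a $\rho$ detects that $\Psi$ is not a sum of the two.
\item I would first try the one-parameter ansatz dual to the splitting used above. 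The Schmidt number bound would then come from an explicit decomposition of $\rho$ into rank-$2$ Schmidt vectors. The birank $(5,5)$ count would follow from the kernel computation.
\end{itemize}
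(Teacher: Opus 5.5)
Your kernel computation, the Schur-complement proof of positivity, the decomposition for $r^2+t^2\le pq$, and the $2$-positivity and $2$-copositivity equivalences are correct. The identity \eqref{eq:sos-positivity} is the same Schur-complement computation with the squares completed. For the decomposition, the paper places the slack $q-(r^2+t^2)/p$ in the completely positive part rather than the copositive one. Your zero-row proof that decomposability forces $r^2+t^2\le pq$ is a valid alternative to the paper's witness argument.

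The gap is atomicity. You only announce a search for a PPT state $\rho$ with $\SN(\rho),\SN(\rho^\Gamma)\le2$ and $\Tr(C_\Psi\rho)<0$, so the claim that $r^2+t^2>pq$ forces atomicity is left unproved. The support you propose is also off. Write $X_{ab,cd}=\langle f_{ab},Xf_{cd}\rangle$. The directions $f_{12},f_{21}$ only add $p(\rho_{12,12}+\rho_{21,21})\ge0$ to the pairing. Moreover, $C_\Psi$ compressed to $\Span\{f_{11},f_{22},f_{33},f_{12},f_{21}\}$ is positive semidefinite on the whole positivity region. So $\rho$ itself must carry the coherence $|f_{23}\rangle\langle f_{32}|$, which is where $t$ sits. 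It also needs diagonal weight on $f_{13},f_{31},f_{22}$ to keep $\rho^\Gamma\succeq0$. The paper's witness, with $\alpha=r/p$ and $\beta=t/p$, is
\begin{align*}
\rho_{p,r,t}&=|\alpha f_{11}-f_{33}\rangle\langle\alpha f_{11}-f_{33}|+\beta|f_{23}-f_{32}\rangle\langle f_{23}-f_{32}|\\
&\quad+\alpha\bigl(|f_{13}\rangle\langle f_{13}|+|f_{31}\rangle\langle f_{31}|\bigr)+\beta^2|f_{22}\rangle\langle f_{22}|.
\end{align*}
Both $\rho_{p,r,t}$ and $\rho_{p,r,t}^\Gamma$ are sums of projections onto vectors of Schmidt rank at most two, and $\Tr(C_\Psi\rho_{p,r,t})=(pq-r^2-t^2)/p$.

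You do not actually need a witness, because your own zero-row argument already proves atomicity. Your remark that it fails for $2$-positive plus $2$-copositive decompositions is mistaken; in effect you already use the relevant fact in your $2$-positivity step. Suppose $\Psi=\Phi_1+\Phi_2$ with $\Phi_1$ $2$-positive and $\Phi_2$ $2$-copositive. Put $A=C_{\Phi_1}$ and $B=C_{\Phi_2}^\Gamma$, so that $C_\Psi=A+B^\Gamma$ with $A,B$ Hermitian and $2$-block-positive, and $(B^\Gamma)_{ab,cd}=B_{ad,cb}$.

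Your computation used only two properties of $A$ and $B$. First, the diagonal entries $X_{ab,ab}$ are nonnegative. Second, every $2\times2$ principal submatrix indexed by two product basis vectors $f_{ab},f_{cd}$ is positive semidefinite. Both hold for $2$-block-positive matrices, because every vector $xf_{ab}+yf_{cd}$ has Schmidt rank at most two. Hence the zero diagonal entries of $C_\Psi$ at $f_{13}$ and $f_{23}$ give $B_{13,31}=A_{23,32}=0$. It follows that $A_{11,33}=r$ and $B_{22,33}=t$, and
\[
r^2+t^2\le A_{11,11}A_{33,33}+B_{22,22}B_{33,33}\le p\,(A_{33,33}+B_{33,33})=pq,
\]
exactly as in your decomposability argument. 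This handles indecomposability and atomicity in one stroke and is shorter than the paper's route. What the explicit witness buys in addition is the birank $(5,5)$ PPT entangled family of Corollary~\ref{cor:adapted-PPT}, which the theorem itself does not require.
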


The \chg{four parameter} family admits a \chg{two parameter} normalization. With
\[
R=\frac r{\sqrt{pq}},\qquad T=\frac t{\sqrt{pq}},
\]
an invertible positive output congruence sends $\Psi_{p,q;r,t}$ to $\Psi_{1,1;R,T}$. Hence positivity is the unit square in the first quadrant, decomposability is the quarter disk $R^2+T^2\le1$, and the remaining positive region is atomic. Moreover, the PPT operator \chg{adapted to the parameters} and used to prove atomicity is itself a PPT entangled state of birank $(5,5)$, with Schmidt number two both before and after partial transpose. Thus the \chg{smooth conic determinantal stratum} supports an exactly solvable positive map phase geometry together with an explicit \chg{family of PPT entangled states}.

\chg{At the normalized boundary point $(R,T)=(1,1)$, the construction is congruent to the Miller and Olkiewicz map described in \cite{MillerOlkiewicz2015,MarciniakRutkowski2017}. The criteria above therefore recover its positivity and atomicity. They do not assert the stronger extremality properties established for that map by other methods.}

The remainder of the paper proves these statements. Section~\ref{sec:consequences} records consequences for established Choi constructions and discusses related questions in positive map theory; those comments are not used in the proofs above.

\medskip
\noindent\textbf{Notation.}
All vector spaces, varieties, and schemes are over $\mathbb C$. For a \chg{finite dimensional} vector space $E$, $\mathbb P(E)$ denotes the projective space of \chg{one dimensional} subspaces. We set $\mathbb P(0)=\varnothing$. For $1\le r<\min\{a,b\}$, set
\[
D_r(a,b)=\{[X]\in\mathbb P(M_{a,b}):\rank X\le r\}.
\]
If $L:M_{a,b}\to W$ is linear, its $r$th determinantal kernel scheme is
\[
K_r(L)=\mathbb P(\ker L)\cap D_r(a,b),
\]
with the scheme structure induced by the determinantal ideal. The sequence $(K_r(L))_r$ is the determinantal kernel profile. Unless stated otherwise, intersections with determinantal varieties carry their induced scheme structures. Grassmannian parameter loci, and intersections between these parameter loci, are understood with their reduced induced structures. This convention does not change the induced scheme structures of the determinantal kernel sections.

{
For a matrix net $K$, choose homogeneous coordinates $[s:t:u]$ on $\mathbb P(K)$ and put $R=\mathbb C[s,t,u]$. We distinguish the ideal of restricted minors from the saturated ideal of the projective section:
\[
\begin{aligned}
I_{\min}(K)&=(\text{restricted }2\times2\text{ minors})\subset R,\\
I(Z_K)&=I_{\min}(K)^\sat
       =I_{\min}(K):(s,t,u)^\infty.
\end{aligned}
\]
Thus $Z_K=\operatorname{Proj}(R/I_{\min}(K))=\operatorname{Proj}(R/I(Z_K))$, but the two homogeneous ideals need not be equal. Throughout, $I(Z_K)$ denotes the saturated ideal.
}

\section{Determinantal kernel geometry of matrix nets}\label{sec:geometry}

\subsection{Determinantal equivalence}

Fix $r$ and consider two linear maps $L_1,L_2:M_{a,b}\to W$. \chg{We call them determinantally equivalent at rank $r$} if
\[
L_2=S\circ L_1\circ T
\]
for some $S\in\mathrm{GL}(W)$ and $T\in\mathrm{GL}(M_{a,b})$ whose projectivization preserves $D_r(a,b)$. If the same $T$ preserves every rank locus, we simply call the maps determinantally equivalent.

\begin{proposition}\label{prop:det-equivalence}
If $L_1$ and $L_2$ are \chg{determinantally equivalent at rank $r$}, then $K_r(L_1)$ and $K_r(L_2)$ are projectively isomorphic. Consequently their Hilbert polynomials, dimensions, degrees, \chg{data on irreducible components}, and singularity types agree.
\end{proposition}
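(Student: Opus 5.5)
The plan is to show directly that the isomorphism $\mathbb P(T)$ restricts to a scheme isomorphism $K_r(L_2)\to K_r(L_1)$; the listed invariants then follow formally.

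\emph{Kernels.} From $L_2=S\circ L_1\circ T$ with $S$ invertible, we have $\ker L_2=\ker(L_1\circ T)=T^{-1}(\ker L_1)$. Hence $T$ restricts to a linear isomorphism $\ker L_2\to\ker L_1$. Its projectivization $\tau=\mathbb P(T)$ is an automorphism of $\mathbb P(M_{a,b})$ carrying the linear subspace $\mathbb P(\ker L_2)$ isomorphically onto $\mathbb P(\ker L_1)$.

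\emph{Determinantal scheme.} By hypothesis, $\tau$ maps $D_r(a,b)$ onto itself. The main point is that this holds for the induced scheme structure defined by the ideal $I_{r+1}$ of $(r+1)\times(r+1)$ minors, not only set-theoretically. I would argue as follows.
\begin{enumerate}
\item $I_{r+1}$ is prime, since determinantal varieties are reduced and irreducible. So the scheme $D_r(a,b)$ is the reduced variety.
\item $\tau$ is an automorphism of $\mathbb P(M_{a,b})$ preserving the underlying set $D_r(a,b)$.
\item Therefore the pullback $T^*$ sends the vanishing ideal of $D_r(a,b)$ to itself, so $T^*(I_{r+1})=I_{r+1}$.
\end{enumerate}
For maps preserving every rank locus, one could also cite the classical description of such $T$ as $X\mapsto PXQ$ (or $PX^{\top}Q$ when $a=b$). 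Minors then transform by Cauchy--Binet, but this description is not needed.

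\emph{Intersection.} Scheme-theoretic intersection commutes with automorphisms of the ambient space. Hence $\tau$ restricts to an isomorphism
\[
K_r(L_2)=\mathbb P(\ker L_2)\cap D_r(a,b)\longrightarrow \mathbb P(\ker L_1)\cap D_r(a,b)=K_r(L_1).
\]
Concretely, the ideal $I_{r+1}+I(\mathbb P(\ker L_1))$ pulls back under $T^*$ to $I_{r+1}+I(\mathbb P(\ker L_2))$. The isomorphism is the restriction of a linear change of coordinates on $\mathbb P(M_{a,b})$, so it is a projective equivalence. Identifying $\ker L_i$ with $\mathbb C^k$ via $T$, it is even a projective equivalence inside the kernel planes themselves.

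\emph{Invariants.} A linear change of coordinates induces a graded isomorphism of homogeneous coordinate rings, and it preserves saturation. So the Hilbert functions and Hilbert polynomials agree, and with them dimension and degree. An isomorphism of schemes also bijects irreducible components, preserving their dimensions and multiplicities, and carries singular points to singular points with isomorphic local rings. Those are the remaining listed invariants.

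The only step requiring care is the second one: upgrading the set-theoretic preservation of $D_r(a,b)$ to preservation of its scheme structure. Primality of the minor ideal handles this.
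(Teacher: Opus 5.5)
Your proposal is correct and follows essentially the same route as the paper: you show $\ker L_2=T^{-1}(\ker L_1)$ and use that $\mathbb P(T^{-1})$ preserves $D_r(a,b)$, so it carries one kernel scheme onto the other. Your extra step, using primality of the ideal of $(r+1)\times(r+1)$ minors to upgrade set-theoretic preservation of $D_r(a,b)$ to preservation of its scheme structure, makes explicit a point that the paper's one-line proof leaves implicit.
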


\begin{proof}
If $L_2=S\circ L_1\circ T$ and $S$ is invertible, then $\ker L_2=T^{-1}(\ker L_1)$. Since $\mathbb P(T^{-1})$ preserves $D_r(a,b)$,
\[
K_r(L_2)=\mathbb P(T^{-1})(K_r(L_1)).
\]
\end{proof}

{For $r=1$, the classical theorem of Marcus and Moyls says that an invertible linear preserver of rank one matrices on $M_{a,b}$ has the form $X\mapsto PXQ$, with $P\in\mathrm{GL}_a(\mathbb C)$ and $Q\in\mathrm{GL}_b(\mathbb C)$, or, only when $a=b$, the form $X\mapsto PX^{\mathsf T}Q$ \cite{MarcusMoyls1959}. For rectangular matrices, transposition instead interchanges the tensor factors and maps $M_{a,b}$ to $M_{b,a}$.} Modern tensor preserver results place this in the broader geometry of Segre and secant varieties \cite{GesmundoHanLovitz2025}. Determinantal equivalence is broader than the standard equivalences preserving positivity, so a separation obtained from the kernel profile remains valid under unitary changes of basis, invertible congruences, permutations, and transposition symmetries.

\subsection{\chg{Rectangular classification in positive dimension}}\label{subsec:positive-dimensional}

We now prove Theorem~\ref{thm:main-positive-dimensional}. \chg{The first step shows that a smooth conic section forces a $2\times2$ compression.}

\begin{theorem}\label{thm:smooth-conic}
Let $K\subset M_{a,b}$ have dimension three. Suppose that
\[
C\subset\mathbb P(K)\cap\Sigma_{a,b}
\]
is a smooth conic spanning $\mathbb P(K)$ and $\mathbb P(K)\not\subset\Sigma_{a,b}$. Then there exist \chg{two dimensional} subspaces $U\subset\mathbb C^a$ and $V\subset\mathbb C^b$ such that
\[
K\subset U\otimes V\simeq M_2(\mathbb C),
\]
and, \chg{as schemes},
\[
\mathbb P(K)\cap\Sigma_{a,b}
=\mathbb P(K)\cap\bigl(\mathbb P(U)\times\mathbb P(V)\bigr)=C.
\]
Under the natural $\mathrm{GL}(U)\times\mathrm{GL}(V)$ action, $K$ is equivalent to the trace zero hyperplane of $M_2(\mathbb C)$.
\end{theorem}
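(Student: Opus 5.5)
Parametrize the conic and read off the two factors. Since $C$ is a smooth conic in $\mathbb P(K)\cap\Sigma_{a,b}$, choose an isomorphism $\mathbb P^1\to C$, $[\lambda:\mu]\mapsto[\phi(\lambda,\mu)]$, where $\phi$ is a homogeneous quadratic map $\mathbb C^2\to K$. Because $C$ spans $\mathbb P(K)$, the three coefficient matrices of $\phi$ form a basis of $K$. Every point of $C$ is rank one, so we may write $\phi(\lambda,\mu)=x(\lambda,\mu)\,y(\lambda,\mu)^{\mathsf T}$. The factors $[x]$ and $[y]$ are the compositions of $C$ with the two Segre projections, giving morphisms $f:\mathbb P^1\to\mathbb P^{a-1}$ and $g:\mathbb P^1\to\mathbb P^{b-1}$. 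Pulling back $\mathcal O(1,1)$ along $C\hookrightarrow\Sigma_{a,b}$ gives $\mathcal O_{\mathbb P^1}(2)$, so $\deg f+\deg g=2$.

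The degree split is the heart of the argument.
\begin{itemize}
\item If $\deg f=0$, then $C$ lies in $\{x_0\}\times\mathbb P^{b-1}$, which is a ruling linear space. Hence $\mathbb P(K)=\Span C$ lies in it, so $\mathbb P(K)\subset\Sigma_{a,b}$, contradicting the hypothesis. The case $\deg g=0$ is excluded symmetrically.
\item Therefore $\deg f=\deg g=1$. Both maps are then linear embeddings onto lines $\mathbb P(U)$ and $\mathbb P(V)$ with $\dim U=\dim V=2$.
\end{itemize}
Writing $x=\lambda u_1+\mu u_2$ and $y=\lambda v_1+\mu v_2$ shows $\phi(\lambda,\mu)\in U\otimes V$, so $K=\Span\phi\subset U\otimes V$. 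In these bases, $K$ is spanned by $u_1v_1^{\mathsf T}$, $u_1v_2^{\mathsf T}+u_2v_1^{\mathsf T}$, and $u_2v_2^{\mathsf T}$, which is the space of symmetric $2\times2$ matrices. Composing with $J=\begin{pmatrix}0&1\\-1&0\end{pmatrix}$ on one side (an element of $\mathrm{GL}(V)$) carries the symmetric matrices onto the trace-zero matrices. This gives the normal form.

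The main obstacle is the scheme-theoretic equality. Set-theoretically, a $2\times 2$ matrix in $K$ has rank $\le1$ exactly when its determinant vanishes. The inclusion $U\otimes V\subset M_{a,b}$ identifies $\Sigma_{a,b}\cap\mathbb P(U\otimes V)$ with $\mathbb P(U)\times\mathbb P(V)$ scheme-theoretically, because the $2\times2$ minors of a matrix supported in a $2\times2$ block, after choosing bases extending $U$ and $V$, reduce to the single block determinant (the other minors vanish identically). So $I_{\min}(K)$ is generated by the restricted determinant. In the trace-zero normal form this determinant is $-(s^2+tu)$, a nondegenerate quadratic form, so the scheme is the reduced smooth conic $C$.

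One subtlety remains: the compression $U\otimes V$ is only a change of basis by $P\in\mathrm{GL}_a$ and $Q\in\mathrm{GL}_b$. Such a change preserves the determinantal ideals and hence the induced scheme structures, as in Proposition~\ref{prop:det-equivalence}, so the computation in adapted bases suffices.
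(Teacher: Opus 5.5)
Your proof is correct and follows the paper's route: the degrees of the two Segre projections on $C$ sum to two, degree zero is excluded because $C$, and hence $\mathbb P(K)$, would lie in a ruling linear space, so both projections are isomorphisms onto lines $\mathbb P(U)$, $\mathbb P(V)$ and $K\subset U\otimes V$. The differences are only in the final steps. You obtain the normal form by reading off that $K$ is the symmetric matrices in adapted bases and twisting by $J$, whereas the paper normalizes a rank two functional $\Tr(L^{\mathsf T}X)$ by transitivity. You also check the scheme equality by reducing all $2\times2$ minors to the block determinant, which makes explicit the identification $\Sigma_{a,b}\cap\mathbb P(U\otimes V)=\mathbb P(U)\times\mathbb P(V)$ that the paper's degree two comparison uses tacitly.
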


\begin{proof}
Identify $\Sigma_{a,b}$ with $\mathbb P(A)\times\mathbb P(B)$, where $A\simeq\mathbb C^a$ and $B\simeq\mathbb C^b$, embedded by $\mathcal O(1,1)$. Since $C\simeq\mathbb P^1$ is a conic, the Segre hyperplane bundle has degree two on $C$. If
\[
\alpha=\deg\pi_A^*\mathcal O_{\mathbb P(A)}(1)|_C,
\qquad
\beta=\deg\pi_B^*\mathcal O_{\mathbb P(B)}(1)|_C,
\]
then $\alpha+\beta=2$. Neither degree can be zero: otherwise $C$ would lie in a ruling linear space, and because it spans $\mathbb P(K)$ the whole plane $\mathbb P(K)$ would lie in the Segre variety. Thus $\alpha=\beta=1$.

Each projection maps $C$ isomorphically onto a projective line. Let those lines be $\mathbb P(U)\subset\mathbb P(A)$ and $\mathbb P(V)\subset\mathbb P(B)$. Then
\[
C\subset Q:=\mathbb P(U)\times\mathbb P(V)\subset\mathbb P(U\otimes V)\simeq\mathbb P^3.
\]
Because $C$ spans $\mathbb P(K)$, the plane $\mathbb P(K)$ lies in $\mathbb P(U\otimes V)$. The quadric surface $Q$ meets this plane in a degree two plane section containing $C$, hence the two schemes coincide.

A plane in $\mathbb P(M_2)$ is a hyperplane defined by a nonzero functional $\ell(X)=\Tr(L^{\mathsf T}X)$. Its section of the determinant quadric is smooth exactly when $L$ has rank two. The \chg{left and right action} is transitive on rank two $L$, so $L$ may be normalized to the identity. Thus
\[
K\sim\left\{
\begin{pmatrix}z&x\\y&-z\end{pmatrix}:x,y,z\in\mathbb C
\right\},
\]
whose determinant is $-z^2-xy$.
\end{proof}

\chg{A second step excludes a nonreduced quadratic curve factor unless the entire plane is a ruling plane.}

\begin{lemma}\label{lem:no-double-line}
Let $K\subset M_{a,b}$ have dimension three and put $S=\mathbb P(K)$. If $S\not\subset\Sigma_{a,b}$, then $S\cap\Sigma_{a,b}$ cannot contain a doubled projective line as its \chg{one dimensional} part.
\end{lemma}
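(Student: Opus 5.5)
We argue by contradiction. Suppose the one dimensional part of $Z=S\cap\Sigma_{a,b}$ is a doubled line. Concretely, the ideal $I_{\min}(K)$ of restricted $2\times2$ minors has, up to saturation and lower dimensional components, the form $(\lambda^2)$ for a linear form $\lambda$ on $S$. We show that this forces every minor to vanish identically on $S$, so that $S\subset\Sigma_{a,b}$.

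First, we normalize the reduced line. The reduced line $\ell=\{\lambda=0\}$ is a line in $\Sigma_{a,b}$. Every line in a Segre variety lies in a ruling, so $\ell=\{x\}\times\mathbb P(V)$ or $\mathbb P(U)\times\{y\}$ with $\dim U$ or $\dim V$ equal to two. By transposition symmetry we take $\ell=\{e_1\}\times\mathbb P(V)$. After a change of bases via Proposition~\ref{prop:det-equivalence}, the line $\ell$ is spanned by $E_{11}$ and $E_{12}$. Choose $N\in K\setminus\mathrm{span}(E_{11},E_{12})$. The net is then $X(s,t,u)=sE_{11}+tE_{12}+uN$, with $\lambda=u$.

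Second, we expand the minors to first order in $u$. Each $2\times2$ minor of $X$ is a quadratic form $m=u\,\ell_m(s,t)+u^2c_m$. Here $\ell_m$ is linear in $s,t$, and its coefficients are built from entries of $N$ outside the first row. Explicitly, the terms linear in $u$ come from the minors using row $1$ and a row $i\ge2$, and they read
\[
s\,n_{ij}-t\,\delta\text{-terms},\quad\text{namely}\quad u\bigl(s\,n_{ij}\ (j\ne1)\ \text{and}\ -s\,n_{i1}+\dots\bigr)\text{ type expressions}.
\]
Concretely, $\det\begin{pmatrix}s&t\\ un_{i1}&un_{i2}\end{pmatrix}=u(sn_{i2}-tn_{i1})$, and $\det\begin{pmatrix}s&un_{1j}\\un_{i1}&un_{ij}\end{pmatrix}=u\,s\,n_{ij}+O(u^2)$ for $j\ge3$, and similarly with $t$ for the column pairs $(2,j)$.

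The doubled line condition says that, generically along $\ell$, the ideal is contained in $(u^2)$. This means every linear term $\ell_m(s,t)$ vanishes identically. Hence $n_{ij}=0$ for all $i\ge2$ and all $j$. So the rows $2,\dots,a$ of $N$ vanish, and $N$ is supported on the first row. Then $K$ consists of matrices with a single nonzero row, so $S\subset\mathbb P(\{e_1\}\otimes B)$, a ruling linear space, and all minors vanish. This contradicts $S\not\subset\Sigma_{a,b}$.

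The main obstacle is making the local condition precise. The claim is that the one dimensional part being the doubled line implies that all $\ell_m$ vanish. I would localize at the generic point of $\ell$, where $\mathcal O_{S,\ell}$ is a DVR with uniformizer $u$. The structure $2\ell$ means the ideal generated there has $u$-adic order exactly $2$. Since each generator has the form $u(\ell_m+uc_m)$ and $\ell_m$ is a unit unless it is zero, a single nonzero $\ell_m$ would make the order $1$, giving a reduced line. This handles the case where the scheme is not the whole plane.

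A second point to check is that the component is really $2\ell$ and not a higher multiple. This follows because the $u^2$ coefficients cannot all vanish: otherwise $S\subset\Sigma_{a,b}$. This completes the plan.
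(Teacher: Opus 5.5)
Your proof is correct and follows the paper's argument. You normalize the line to $\mathbb P\Span\{E_{11},E_{12}\}$, use the $u$-linear coefficients $s n_{i2}-t n_{i1}$ and $s n_{ij}$ of the minors on rows $1,i$ to force every row of $N$ below the first to vanish, and conclude that $S\subset\mathbb P(e_1\otimes\mathbb C^b)$; your DVR remark at the generic point of the line supplies an explicit justification for the step the paper only asserts, namely that multiplicity at least two forces every restricted minor to be divisible by $u^2$.
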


\begin{proof}
{We prove the assertion for a Segre line from the first ruling. The second ruling is handled by interchanging the two tensor factors, equivalently by transposing the entire argument and exchanging $a$ and $b$. In the first case, after independent changes of row and column bases, take}
\[
L=\mathbb P\Span\{E_{11},E_{12}\}\subset S,
\qquad
S=\mathbb P\Span\{E_{11},E_{12},A\}.
\]
Write
\[
X(s,t,u)=sE_{11}+tE_{12}+uA.
\]
If $L=\{u=0\}$ occurs with multiplicity at least two, every $2\times2$ minor of $X$ is divisible by $u^2$. For each $i\ge2$, the coefficient of $u$ in the minor on rows $1,i$ and columns $1,2$ is
\[
s a_{i2}-t a_{i1},
\]
so $a_{i1}=a_{i2}=0$. Then for every $j\ge3$, the coefficient of $u$ in the minor on rows $1,i$ and columns $1,j$ is $s a_{ij}$, hence $a_{ij}=0$. Thus every row of $A$ except the first vanishes, and
\[
S\subset\mathbb P(e_1\otimes\mathbb C^b)\subset\Sigma_{a,b},
\]
a contradiction.
\end{proof}

\chg{We can now classify the complete set of positive dimensional plane sections.}

\begin{theorem}\label{thm:positive-plane-sections}
Let $K\subset M_{a,b}$ have dimension three. If $S=\mathbb P(K)$ meets $\Sigma_{a,b}$ in positive dimension, then the underlying set is one of the four alternatives in Theorem~\ref{thm:main-positive-dimensional}: a ruling plane, a smooth conic, two \chg{Segre lines from opposite rulings}, or one Segre line with at most one residual point. The complete scheme structures are precisely the six types in Theorem~\ref{thm:main-positive-dimensional}.
\end{theorem}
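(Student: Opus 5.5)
The starting point is that $\Sigma_{a,b}$ is cut out by the $2\times2$ minors. If $S=\mathbb P(K)\not\subset\Sigma_{a,b}$, some restricted minor is a nonzero form of degree at most two on $S\simeq\mathbb P^2$. The one-dimensional part of $Z_K$ is contained in every nonzero minor, so it lies inside a single conic. Hence the one-dimensional part of $Z_K$ is a line, two distinct lines, a doubled line, or a smooth conic. The ruling plane case $S\subset\Sigma_{a,b}$ is set aside at the outset.

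The curve cases reduce to earlier results as follows.
\begin{itemize}
\item Any line contained in $\Sigma_{a,b}$ lies in a fiber of one of the projections, so it is a Segre line of one ruling.
\item Two lines from the same ruling meet and lie in a common ruling linear space $\mathbb P(x\otimes\mathbb C^b)$ (or its transpose). Their span is then $S$, so $S\subset\Sigma_{a,b}$, which is excluded.
\item Two lines from opposite rulings meet at $x\otimes y$ and have the form $\mathbb P(x\otimes V')$ and $\mathbb P(U'\otimes y)$. After row and column changes, $K=\Span\{E_{11},E_{12},E_{21}\}$. The only nonzero minor of $X=sE_{11}+tE_{12}+uE_{21}$ is $-tu$, so the scheme is exactly the reduced union of the two lines.
\item A doubled line is excluded by Lemma~\ref{lem:no-double-line}.
\item A smooth conic gives scheme $C$ exactly, together with the $2\times2$ compression, by Theorem~\ref{thm:smooth-conic}.
\end{itemize}

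It remains to treat the case where the one-dimensional part is a single Segre line. By transposition we may take $L=\mathbb P\Span\{E_{11},E_{12}\}$ and $S=\mathbb P\Span\{E_{11},E_{12},A\}$, and we can subtract multiples of $E_{11},E_{12}$ from $A$. Write $X=sE_{11}+tE_{12}+uA$ and let $A'$ be the block of $A$ on rows $2,\dots,a$. The minors split into two families.
\begin{itemize}
\item A minor on rows $1,i$ has the form $u\cdot\ell$, where $\ell$ is a linear form. For instance, on columns $1,2$ one gets $\ell=s a_{i2}-t a_{i1}+u(\cdots)$.
\item A minor on rows $i,i'\ge2$ equals $u^2m$, where $m$ is a $2\times2$ minor of $A'$.
\end{itemize}
Hence $I_{\min}(K)=u\cdot J$ with $J=(\ell\text{'s},\,u\,m\text{'s})$, and the residual scheme is $V(J)$. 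Let $\Lambda$ be the span of the linear forms $\ell$. I would split according to $\dim\Lambda$.
\begin{itemize}
\item If $\dim\Lambda=3$, then $V(J)=\varnothing$ and $Z_K$ is the reduced line; one must check that $uJ$ saturates to $(u)$.
\item If $\dim\Lambda=2$, then $V(J)$ is a single reduced point $p$, or is empty if some $u\,m$ is nonzero at $p$. When $p\notin L$ the scheme is the line plus a reduced point. When $p\in L$ one gets a line with an embedded point of length one, via the primary decomposition $(u)\cap(\ell_1,\ell_2)$.
\item If $\dim\Lambda\le1$, all the conditions $s a_{i2}-t a_{i1}\equiv0$ modulo $\Lambda$, together with the analogous column-$j$ forms $s a_{ij}$, force $A'$ to have rank at most one with a rigid shape.
\end{itemize}
In the last case I would show directly that $Z_K$ then contains a second line, which places us back in the two-line or ruling-plane cases. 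This contradicts the assumption that the one-dimensional part is a single line.

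The main obstacle I expect is this last linear-algebra step. One must determine exactly when $\Lambda$ drops in dimension and show that it always produces an extra line, never a residual scheme of length $\ge2$. The fact that this residual length is at most one is what makes the list of six scheme types complete. I would attack it by normalizing $A'$ using the stabilizer of $L$, which consists of row operations on rows $\ge2$ and column operations on columns $\ge3$, together with $\mathrm{GL}_2$ on columns $1,2$. This reduces $A'$ to a short list of rank normal forms, and the forms $\ell$ can then be read off directly from each.
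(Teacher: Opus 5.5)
Your outline follows the paper's proof closely. Every curve component of $Z_K$ divides a nonzero restricted quadric. The smooth conic is handled by Theorem~\ref{thm:smooth-conic}, the doubled line by Lemma~\ref{lem:no-double-line}, and same-ruling pairs by the ruling-plane argument. The single-line case is handled by writing $I_{\min}(K)=uJ$ with $J$ generated by linear forms. Your normal form $\Span\{E_{11},E_{12},E_{21}\}$ for two opposite lines is a fine substitute for the paper's remark that all nonzero minors are then multiples of the reducible quadric. Two points need repair.

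\textbf{The step you flag as the main obstacle is immediate.} Let $J$ be generated by all quotients of the minors by $u$. This includes the linear forms $u\,m$ coming from rows $i,i'\ge2$; do not split off $\Lambda$. If $\dim J_1\le1$, then $I_{\min}(K)=(u\ell)$ is principal. So $Z_K=V(u\ell)$ is one of the following.
\begin{enumerate}
\item Two lines, which contradicts uniqueness and is already treated.
\item A doubled line, when $\ell$ is proportional to $u$, excluded by Lemma~\ref{lem:no-double-line}.
\item The whole plane, when $\ell=0$.
\end{enumerate}
This is exactly how the paper disposes of it: a common quadratic factor, already treated. No normal forms for $A'$ are needed.

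Your computation would also work. Suppose $\dim\Lambda\le1$. For $j\ge3$, the forms $s a_{ij}-u a_{1j}a_{i1}$ and $t a_{ij}-u a_{1j}a_{i2}$ have independent $(s,t)$-parts unless $a_{ij}=0$. The forms $s a_{i2}-t a_{i1}$ then force $A'$ to have rank at most one, supported on columns $1,2$, so every $m$ vanishes and $J=\Lambda$. It follows that $a_{1j}=0$ for $j\ge3$. Then $\mathbb P(K)$ is either a pair of opposite lines or lies in $\mathbb P(e_1\otimes\mathbb C^b)$. So your claim is true, just heavier than necessary.

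\textbf{Your primary decomposition in the embedded case is wrong.} If $p=V(\ell_1,\ell_2)$ lies on $L$, then $u\in\Span\{\ell_1,\ell_2\}$. Hence $(u)\cap(\ell_1,\ell_2)=(u)$, which is just the reduced line. The decomposition $(u)\cap J=uJ$ is valid only when $p\notin L$, where $u,\ell_1,\ell_2$ are independent coordinates. When $p\in L$, write $J=(u,v)$. Then
\[
uJ=(u^2,uv)=(u)\cap(u^2,v),
\]
which is saturated. The ``length one'' must be read as the contribution to the Hilbert polynomial. It comes from the exact sequence
\[
0\to (R/(u,v))(-1)\to R/(u^2,uv)\to R/(u)\to 0,
\]
not from the primary component $V(u^2,v)$, which has length two. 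With these two corrections, your argument gives the paper's six scheme types.
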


\begin{proof}
The ideal of the Segre variety is generated by the $2\times2$ minors. Restrict them to $S\simeq\mathbb P^2$. If every restricted minor vanishes, then $S\subset\Sigma_{a,b}$. A linear space consisting entirely of decomposable tensors has a common factor, so $S$ lies in a ruling linear space.

Assume $S\not\subset\Sigma_{a,b}$. Let ${I=I_{\min}(K)}\subset\mathbb C[s,t,u]$ be generated by the nonzero restricted minors. Every generator is quadratic. If the intersection contains an irreducible curve $g=0$, then $g$ divides every element of $I$, so $\deg g\le2$.

If the greatest common curve factor has degree two, then all nonzero minors are scalar multiples of that quadratic. Over $\mathbb C$ it is either a smooth irreducible conic, a product of two distinct lines, or a square. The square is excluded by Lemma~\ref{lem:no-double-line}; the irreducible case is handled by Theorem~\ref{thm:smooth-conic}. In the reducible case, the two lines lie on the Segre variety. Because two lines in a projective plane intersect, they cannot belong to the same Segre ruling unless their span is a ruling plane contained in the Segre variety. Hence they are from opposite rulings, and their span lies in a $2\times2$ compression as the tangent plane to its Segre quadric.

It remains to suppose the greatest common curve factor is a single linear form $u$. Then the restricted minors have the form $u\ell_j$ with $\ell_j$ linear. If the span of the $\ell_j$ had dimension at most one, there would be a common quadratic factor, already treated. Thus their span has dimension two or three. Their common projective zero set is respectively a single point or the empty set. The scheme refinement is proved in Theorem~\ref{thm:residual-linear-ideal} below.
\end{proof}

\subsection{Global rectangular Grassmannian geometry}

Let $A\simeq\mathbb C^a$, $B\simeq\mathbb C^b$, with $a,b\ge3$. A \chg{Segre line from the first ruling} is $\mathbb P(\mathbb Cu\otimes Z)$ with $[u]\in\mathbb P(A)$ and $Z\in\Gr(2,B)$. Let $\mathcal L_A$ be the locus of matrix nets containing such a line; define $\mathcal L_B$ symmetrically. Let $\mathcal C$ be the locus of \chg{three dimensional} $K$ that are hyperplanes in $U\otimes V$ for some $U\in\Gr(2,A)$ and $V\in\Gr(2,B)$.

\chg{The preceding classification globalizes to three irreducible Grassmannian components.}

\begin{theorem}\label{thm:global-components}
The conclusions and dimension formulas of Theorem~\ref{thm:components} hold.
\end{theorem}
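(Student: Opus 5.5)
By Theorem~\ref{thm:positive-plane-sections}, a net $K$ lies in $\mathcal P_{a,b}$ exactly when one of three things holds: $\mathbb P(K)$ is a ruling plane, it contains a Segre line, or it is a hyperplane in a $2\times2$ compression. The smooth conic case falls into the last alternative by Theorem~\ref{thm:smooth-conic}. A ruling plane contains Segre lines of its own ruling. Hence $\mathcal P_{a,b}=\mathcal L_A\cup\mathcal L_B\cup\mathcal C$ set-theoretically. The plan is to present each locus as the image of a Grassmannian bundle over a homogeneous base, compute dimensions, show non-containment, and then analyse the intersections.

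For $\mathcal L_A$, I will use the incidence variety
\[
I_A=\{([u],Z,K): u\otimes Z\subset K\}\to \mathbb P(A)\times\Gr(2,B).
\]
Its fibre is $\mathbb P(M_{a,b}/(u\otimes Z))\simeq\mathbb P^{ab-3}$, so $I_A$ is irreducible of dimension $(a-1)+2(b-2)+ab-3=ab+a+2b-8$. The projection $I_A\to\mathcal L_A$ is generically finite, and in fact injective: a general $K$ contains exactly one Segre line, since a second one would force a compression or a ruling plane. So $\dim\mathcal L_A=ab+a+2b-8$, and $\mathcal L_B$ is handled symmetrically.

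For $\mathcal C$, I will use the bundle $\mathbb P((U\otimes V)^*)\simeq\mathbb P^3$ over $\Gr(2,A)\times\Gr(2,B)$. It is smooth and irreducible of dimension $2(a-2)+2(b-2)+3=2a+2b-5$. To show it maps isomorphically onto $\mathcal C$, I need $K$ to determine $U$ and $V$. The row space of $K$ is $U$ and its column space is $V$, because a hyperplane of $U\otimes V$ cannot lie in $U'\otimes V$ with $U'$ a line: that space has dimension $2<3$. Since $(U,V)$ is recovered from $K$ as row and column spaces, a differential check shows the map is an immersion, and so $\mathcal C$ is smooth.

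For non-containment, I will check that $\mathcal C\not\subset\mathcal L_A\cup\mathcal L_B$, because a traceless-type hyperplane contains no Segre line, only the conic. Also $\mathcal L_A\not\subset\mathcal L_B\cup\mathcal C$, because the general member of $\mathcal L_A$ has full row and column spaces when $a,b\ge3$. This inequality is strict on dimensions except possibly for small $a$, and there I will argue directly via a general member. These three facts give exactly three components.

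The intersections are the main obstacle. The target is
\[
D=\{K\in\mathcal C:\ K\cap(U\otimes V)\text{ is a tangent hyperplane of }Q\},
\]
which in the bundle coordinates is the relative dual quadric $\{\det L=0\}$. This is an irreducible divisor of dimension $2a+2b-6$, the relative Segre divisor. I will show $D=\mathcal L_A\cap\mathcal L_B=\mathcal L_A\cap\mathcal C=\mathcal L_B\cap\mathcal C$.

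The inclusion of $D$ in each intersection holds because a tangent plane contains one line from each ruling. For the reverse inclusions, first take $K\in\mathcal C$ containing a Segre line. By Theorem~\ref{thm:smooth-conic} the section is not a smooth conic, so $L$ is degenerate and $K\in D$. The harder inclusion is $\mathcal L_A\cap\mathcal L_B\subset D$. If $K$ contains lines $u\otimes Z$ and $W\otimes v$, these lines meet in the plane, so $u\otimes v\in K$ with $v\in Z$ and $u\in W$. Then $K=\Span(u\otimes Z+W\otimes v)$, which lies in $W\otimes Z\simeq M_2$ and is the tangent hyperplane at $u\otimes v$. The degenerate case is when the span has dimension two, that is, when the two lines coincide. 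That is impossible, since the lines come from opposite rulings and are not equal.

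Finally, irreducibility of $D$ follows because it is a quadric-cone bundle whose fibres are irreducible rank-three quadrics in $\mathbb P^3$. This also confirms $\dim D=2a+2b-6$.
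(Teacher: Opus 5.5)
Your proposal follows the paper's proof. It uses the same incidence variety over $\mathbb P(A)\times\Gr(2,B)$ for $\mathcal L_A$ and the same $\mathbb P^3$-bundle of hyperplanes of $U\otimes V$ over $\Gr(2,A)\times\Gr(2,B)$ for $\mathcal C$, with $(U,V)$ recovered as the two supports. It also uses the same observation that a line from each ruling spans the tangent plane of a $2\times2$ Segre quadric.

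Two small differences, plus one overclaim:
\begin{itemize}
\item The paper gets generic finiteness from the explicit net $\Span\{E_{11},E_{12},E_{22}+E_{33}\}$. Your claim that $I_A\to\mathcal L_A$ is ``in fact injective'' is false globally: a ruling plane $\mathbb P(u\otimes W)$ with $\dim W=3$ contains a $\mathbb P^2$ of first-ruling lines. Only generic injectivity holds, and that is all you need.
\item For smoothness of $\mathcal C$, the paper builds an inverse morphism from the constant-rank contraction maps $K\otimes B^*\to A$ and $K\otimes A^*\to B$. This avoids the differential check you assert but do not carry out.
\end{itemize}

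There is one genuine error, in your last paragraph. The fibre of $D$ over $(U,V)$ is $\{[\ell]\in\mathbb P(U^*\otimes V^*):\det\ell=0\}$. The determinant on $2\times2$ matrices, $x_1x_4-x_2x_3$, is a nondegenerate quadratic form of rank four. So each fibre is the smooth quadric $\mathbb P(U^*)\times\mathbb P(V^*)$ of rank-one functionals, not a rank-three cone.

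Hence $D$ is the fibre product $\mathbb P(\mathcal U^*)\times\mathbb P(\mathcal Z^*)$ over $\Gr(2,A)\times\Gr(2,B)$, with $\mathcal U,\mathcal Z$ the tautological bundles. It is therefore smooth and irreducible of dimension $2a+2b-6$ with no further argument. This is exactly what ``relative Segre divisor'' means; a cone bundle would contradict that name and would be singular along its vertex section. This description also replaces your ``irreducible fibres'' argument, which as stated would still need properness and equidimensionality. Your irreducibility and dimension conclusions survive the correction.
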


\begin{proof}
The classification in Theorem~\ref{thm:positive-plane-sections} gives
\[
\mathcal P_{a,b}=\mathcal L_A\cup\mathcal L_B\cup\mathcal C.
\]
Lines from the first ruling are parametrized by
\[
F_A=\mathbb P(A)\times\Gr(2,B),
\qquad
\dim F_A=(a-1)+2(b-2)=a+2b-5.
\]
For a fixed line with underlying \chg{two dimensional} vector space $L$, the \chg{three dimensional} subspaces $K\supset L$ are parametrized by
\[
\Gr(1,(A\otimes B)/L)\simeq\mathbb P^{ab-3}.
\]
Thus the incidence variety is projective and irreducible of dimension $ab+a+2b-8$, and its image $\mathcal L_A$ is closed and irreducible. Its projection is generically finite: the embedded $3\times3$ example
\[
K=\Span\{E_{11},E_{12},E_{22}+E_{33}\}
\]
viewed inside $M_{a,b}$ has exactly the displayed Segre line from the first ruling. Hence
\[
\dim\mathcal L_A=ab+a+2b-8.
\]
The other formula is symmetric.

For the compression component, let
\[
B_0=\Gr(2,A)\times\Gr(2,B)
\]
and let $\mathcal U,\mathcal Z$ be the tautological rank two bundles. The projective bundle
\[
X=\mathbb P((\mathcal U\otimes\mathcal Z)^*)\longrightarrow B_0
\]
parametrizes triples $(U,V,[\ell])$ and their hyperplanes $K=\ker\ell\subset U\otimes V$. The induced morphism $X\to\Gr(3,A\otimes B)$ has image $\mathcal C$.

For every such $K$, its left and right supports have dimension exactly two: if, for instance, the left support had dimension one, then $K$ would lie in a \chg{two dimensional} space $\mathbb Cu\otimes V$, impossible because $\dim K=3$. Thus the supports recover $U$ and $V$ uniquely. Indeed, the left support is the image of the contraction map $K\otimes B^*\to A$, and the right support is the image of $K\otimes A^*\to B$. Their ranks are constantly two on $\mathcal C$, so these images define regular \chg{Grassmannian valued} support maps. They give an inverse to $X\to\mathcal C$. Hence
\[
\mathcal C\simeq X,
\qquad
\dim\mathcal C=2(a-2)+2(b-2)+3=2a+2b-5,
\]
and $\mathcal C$ is smooth.

{Let $\mathcal C_{\mathrm{red}}$ denote the locus of reducible conic sections in $\mathcal C$.}
Inside each $\mathbb P^3$ fiber, singular plane sections of $\mathbb P^1\times\mathbb P^1$ are exactly the \chg{rank one} functionals, forming $\mathbb P^1\times\mathbb P^1$. Therefore
\[
{\mathcal C_{\mathrm{red}}}
=\mathbb P(\mathcal U^*)\times_{B_0}\mathbb P(\mathcal Z^*)
\]
is smooth, irreducible, and has dimension $2a+2b-6$. A compression plane contains a Segre line exactly when its quadric section is reducible, so
\[
\mathcal L_A\cap\mathcal C
=\mathcal L_B\cap\mathcal C
={\mathcal C_{\mathrm{red}}}.
\]
If a plane contains one line from each ruling, the two lines intersect and span the tangent plane of the corresponding $2\times2$ Segre quadric. Hence $\mathcal L_A\cap\mathcal L_B={\mathcal C_{\mathrm{red}}}$. Finally, none of the three irreducible loci contains another, as witnessed by a \chg{smooth conic compression plane}, the \chg{single line examples} above, and their transposes.
\end{proof}

\chg{Specializing the dimension formulas immediately gives the case relevant to $M_4$.}

\begin{corollary}\label{cor:M4-components}
For $M_4$, the \chg{locus of rank one schemes of positive dimension} in $\Gr(3,M_4)$ has three irreducible components of dimensions
\[
20,\qquad20,\qquad11,
\]
with common pairwise intersection of dimension $10$.
\end{corollary}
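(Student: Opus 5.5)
This is a direct specialization of Theorem~\ref{thm:global-components} (equivalently Theorem~\ref{thm:components}) to $a=b=4$. The hypothesis $a,b\ge3$ required there holds, so the locus $\mathcal P_{4,4}\subset\Gr(3,M_4)$ of matrix nets with positive dimensional rank one scheme has exactly the three irreducible components $\mathcal L_A$, $\mathcal L_B$, $\mathcal C$. No further geometric input is needed.

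The plan is to substitute $a=b=4$ into the three dimension formulas. For the line components this gives
\[
ab+a+2b-8=16+4+8-8=20,\qquad ab+2a+b-8=16+8+4-8=20.
\]
For the compression component it gives $2a+2b-5=11$. For the common pairwise intersection, the relative Segre divisor $\mathcal C_{\mathrm{red}}$, it gives $2a+2b-6=10$. As a sanity check, $\dim\Gr(3,M_4)=3\cdot13=39$, so the line components have codimension $19$ and the compression component codimension $28$.

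The only point needing care is that in the square case $a=b$ there is an extra transposition symmetry, by the Marcus--Moyls description. This symmetry exchanges $\mathcal L_A$ and $\mathcal L_B$, but it does not identify them as subsets of $\Gr(3,M_4)$. The witnesses in the proof of Theorem~\ref{thm:global-components} still separate them: the net $\Span\{E_{11},E_{12},E_{22}+E_{33}\}$ contains a first-ruling line but no second-ruling line, and its transpose does the reverse. So the count of three components is unaffected. There is no genuine obstacle; the corollary is arithmetic once the general theorem is available.
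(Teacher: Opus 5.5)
Your proposal is correct and matches the paper's argument: the corollary follows by substituting $a=b=4$ into the dimension formulas of Theorem~\ref{thm:components}, which gives $20$, $20$, $11$, and $10$ for the common intersection $\mathcal C_{\mathrm{red}}$. Your check that the transposition symmetry does not identify $\mathcal L_A$ with $\mathcal L_B$ is consistent with the separating witnesses already used in the proof of Theorem~\ref{thm:global-components}.
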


\subsection{Scheme structure along a unique Segre line}

Fix a Segre line from the first ruling and normalize it as
\[
L=\mathbb P\Span\{E_{11},E_{12}\}.
\]
Write $K=\Span\{E_{11},E_{12},A\}$ and, after modifying $A$ by the first two basis elements, assume $a_{11}=a_{12}=0$. In homogeneous coordinates on $\mathbb P(K)$,
\[
X(s,t,u)=sE_{11}+tE_{12}+uA.
\]
Every restricted $2\times2$ minor is divisible by $u$, so
\begin{equation}\label{eq:uJK}
{I_{\min}(K)}
=(u\ell_1,\ldots,u\ell_N)=uJ_K,
\qquad
J_K=(\ell_1,\ldots,\ell_N),
\end{equation}
where $N=\binom a2\binom b2$. {The construction for the second ruling is obtained by transposing the matrices.}

\chg{The residual ideal in \eqref{eq:uJK} is linear and leaves only three possibilities.}

\begin{theorem}\label{thm:residual-linear-ideal}
Assume that $L$ is the unique \chg{one dimensional} component of $\mathbb P(K)\cap\Sigma_{a,b}$. Then $\dim(J_K)_1\ge2$, and exactly one of the following occurs:
\begin{enumerate}
\item $\dim(J_K)_1=3$, in which case {$I(Z_K)=(u)$ and the section is the reduced line $L$};
\item $\dim(J_K)_1=2$ and $u\notin(J_K)_1$, in which case the section is the disjoint union of $L$ and one reduced point off $L$;
\item $\dim(J_K)_1=2$ and $u\in(J_K)_1$, in which case, after a linear change of coordinates preserving $L$,
\[
uJ_K=(u^2,uv)=(u)\cap(u^2,v),
\]
so the section is $L$ with {one embedded residual point whose contribution to the structure sheaf has length one}.
\end{enumerate}
No other scheme structure occurs in the \chg{case of a unique line}.
\end{theorem}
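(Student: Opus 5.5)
The plan is to work directly with the factorization \eqref{eq:uJK}, $I_{\min}(K)=uJ_K$, where $J_K$ is generated by the linear forms $\ell_j$. Saturation and primary decomposition in $R=\mathbb C[s,t,u]$ are then easy to compute by hand once $(J_K)_1$ is normalized. The argument has three parts: the lower bound on $\dim(J_K)_1$, the case split, and the computation of the saturated ideal in each case.

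\textbf{Step 1: $\dim(J_K)_1\ge2$.} Suppose $\dim(J_K)_1\le1$. If $J_K=0$, every restricted minor vanishes, so $\mathbb P(K)\subset\Sigma_{a,b}$. Then $\mathbb P(K)$ is a ruling plane, which has no one dimensional component at all, contradicting the hypothesis. If $(J_K)_1=\mathbb C\ell$, every nonzero minor is a multiple of $u\ell$. Then $Z_K$ contains the curve $u\ell=0$, which is either the line $\ell=0$ distinct from $L$ or, when $\ell\propto u$, a doubled $L$. The first is excluded by uniqueness of $L$ and the second by Lemma~\ref{lem:no-double-line}. This is the same argument used at the end of the proof of Theorem~\ref{thm:positive-plane-sections}.

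\textbf{Step 2: the three cases.}

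If $\dim(J_K)_1=3$, then $J_K=\mathfrak m=(s,t,u)$. Hence $uJ_K=u\mathfrak m$, whose saturation is $(u)$, and $Z_K=L$ is reduced.

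If $\dim(J_K)_1=2$, then $J_K$ is the ideal of a point $p$. The point $p$ lies off $L$ exactly when $u\notin(J_K)_1$. In that case change coordinates preserving $L$ so that $J_K=(s,t)$, giving $p=[0:0:1]$ with $u(p)\neq0$. Since $u$ and $(s,t)$ share no factor,
\[
uJ_K=(us,ut)=(u)\cap(s,t),
\]
and this ideal is already saturated. So the section is $L\sqcup\{p\}$ with both pieces reduced.

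If $u\in(J_K)_1$, write $J_K=(u,v)$ with $v$ a linear form independent of $u$. Then
\[
uJ_K=(u^2,uv)=(u)\cap(u^2,v).
\]
This equality is checked by writing $f=ug\in(u^2,v)$ and reducing modulo $v$. It is a primary decomposition whose second component is $(u,v)$-primary with radical the point $u=v=0$ on $L$. The ideal $(u^2,uv)$ is saturated, because its associated primes are $(u)$ and $(u,v)$, neither of which is the irrelevant ideal. So the point is genuinely embedded. Its length contribution is computed locally at the point: with $v=0$ and local coordinate $u$, the quotient of $\mathcal O/(u^2)$ by $\mathcal O/(u)$ is $(u)/(u^2)\simeq\mathbb C$, which has length one.

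\textbf{Step 3: exhaustiveness.} The three cases are mutually exclusive and cover all possibilities by Step 1, since $\dim(J_K)_1\in\{2,3\}$ and the case $2$ splits according to whether $u\in(J_K)_1$. This proves the closing sentence of the theorem.

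\textbf{Main obstacle.} The delicate step is Step 1, specifically ruling out a common quadratic factor without circularity. It relies on Lemma~\ref{lem:no-double-line} and on the uniqueness hypothesis, and not on any realizability question. Whether each case actually occurs is not part of this statement, so no examples are needed here.
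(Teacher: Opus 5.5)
Your proposal is correct and follows the paper's proof essentially step for step. It uses the same exclusion of $\dim(J_K)_1\le1$ via uniqueness of $L$ and Lemma~\ref{lem:no-double-line}, and the same three normal forms $u\mathfrak m$, $u(s,t)=(u)\cap(s,t)$, and $(u^2,uv)=(u)\cap(u^2,v)$. The one difference is how you measure the embedded contribution. The paper uses the graded sequence $0\to (R/(u,v))(-1)\to R/(u^2,uv)\to R/(u)\to0$, which also yields the Hilbert polynomial $d+2$. You compute locally instead, and there the precise statement is that the kernel $(u)/(u^2,uv)\cong\mathcal O/((u^2,uv):u)=\mathcal O/(u,v)\cong\mathbb C$ has length one, rather than an argument phrased as setting $v=0$.
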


\begin{proof}
{If $\dim(J_K)_1=0$, all restricted minors vanish, so the section is the whole plane, contrary to the assumption that $L$ is a one dimensional component. If $\dim(J_K)_1=1$, write $J_K=(\ell)$ with $\ell\ne0$ linear. When $\ell$ is not proportional to $u$, the ideal $I_{\min}(K)=(u\ell)$ defines two distinct Segre lines, contradicting uniqueness of $L$. When $\ell$ is proportional to $u$, this ideal defines a doubled line, contrary to Lemma~\ref{lem:no-double-line}. Hence $\dim(J_K)_1\ge2$.}

If the dimension is three, $J_K=(s,t,u)$ after a linear change of coordinates, and $(uJ_K)^\sat=(u)$. If the dimension is two, $J_K$ is the homogeneous ideal of a unique point $p\in\mathbb P^2$. When $u\notin(J_K)_1$, one has $p\notin L$ and
\[
uJ_K=(u)\cap J_K.
\]
When $u\in(J_K)_1$, write $J_K=(u,v)$ and obtain
\[
uJ_K=(u^2,uv)=(u)\cap(u^2,v).
\]
{
The ideals in both cases with $\dim(J_K)_1=2$ are saturated, so they equal $I(Z_K)$. To make the embedded contribution precise, put $R=\mathbb C[u,v,w]$. Multiplication by $u$ gives an exact sequence of graded $R$-modules
\[
0\longrightarrow (R/(u,v))(-1)
\xrightarrow{\ \cdot u\ } R/(u^2,uv)
\longrightarrow R/(u)\longrightarrow0.
\]
After sheafification, the kernel of the map to $\mathcal O_L$ is the structure sheaf of the point $p=V(u,v)$, twisted by $-1$, and has length one. Thus the extra embedded structure contributes one to the Hilbert polynomial, giving $d+2$. This length is not the length of the primary component $V(u^2,v)$ in the displayed decomposition, which is two.
}
\end{proof}

\chg{The embedded residual point can also be recognized by tangency to the Segre variety.}

\begin{proposition}\label{prop:tangency}
In case (iii) of Theorem~\ref{thm:residual-linear-ideal}, the embedded point $p\in L$ is characterized by
\[
\mathbb P(K)\subset T_p\Sigma_{a,b}.
\]
Conversely, if a plane contains $L$, is not contained in the Segre variety, and is tangent to $\Sigma_{a,b}$ at $p\in L$ without containing a second Segre line, then its \chg{rank one} section is $L$ with {one embedded residual point at $p$ contributing length one}.
\end{proposition}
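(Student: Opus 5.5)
The plan is to compute everything in the normal form of the preceding subsection and to compare two conditions on the lower rows of $A$. Take $L=\mathbb P\Span\{E_{11},E_{12}\}$ and $K=\Span\{E_{11},E_{12},A\}$ with $a_{11}=a_{12}=0$, so that $X(s,t,u)=sE_{11}+tE_{12}+uA$. The second ruling is handled by transposition, as in Lemma~\ref{lem:no-double-line}. A point of $L$ is $p=[e_1\otimes w]$ with $w=s_0e_1+t_0e_2$. The tangent space to the Segre variety there is the standard one,
\[
T_p\Sigma_{a,b}=\mathbb P\bigl(e_1\otimes B+A\otimes w\bigr).
\]
Since $E_{11},E_{12}\in e_1\otimes B$, the condition $\mathbb P(K)\subset T_p\Sigma_{a,b}$ is equivalent to $A\in e_1\otimes B+A\otimes w$. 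Concretely, every row $a_i$ of $A$ with $i\ge2$ must be a multiple of $w^{\mathsf T}$, while the first row is unrestricted.

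Next I would write out the generators of $J_K$. A minor on rows $i,i'\ge2$ equals $u^2$ times a minor of the lower block of $A$, so after dividing by $u$ it contributes a multiple of $u$. A minor on rows $1,i$ and columns $j<k$ contributes $\ell=\lambda_{ijk}(s,t)+c\,u$, where
\[
\lambda_{i12}=s a_{i2}-t a_{i1},\qquad \lambda_{i1k}=s a_{ik},\qquad \lambda_{i2k}=t a_{ik}\quad(k\ge3),
\]
and $\lambda_{ijk}=0$ for $3\le j<k$. The key device is the projection $\pi:(J_K)_1\to R_1/\mathbb Cu\simeq\Span\{s,t\}$. Its image is spanned by the forms $\lambda_{ijk}$. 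A short check on these formulas shows that $\dim\pi((J_K)_1)\le1$ with the image vanishing at $[s_0:t_0]$ if and only if two things hold. First, $a_{ik}=0$ for all $i\ge2$ and $k\ge3$; otherwise both $s$ and $t$ lie in the image. Second, every pair $(a_{i1},a_{i2})$ is proportional to $(s_0,t_0)$. These two conditions together are exactly the tangency condition from the first paragraph.

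For the forward direction I would argue as follows. In case (iii) we have $(J_K)_1=\Span\{u,v\}$, so $\pi((J_K)_1)=\mathbb C\bar v$ is one-dimensional and vanishes at the point $V(u,v)=p$. The criterion above then gives $\mathbb P(K)\subset T_p\Sigma_{a,b}$. For uniqueness of $p$, suppose $\mathbb P(K)$ were also tangent at a second point $p'\ne p$ of $L$. Then the lower rows of $A$ would be multiples of two independent vectors $w,w'$, hence zero. That forces $\mathbb P(K)\subset\Sigma_{a,b}$, which is excluded.

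For the converse I would again use the criterion. Tangency at $p$ gives $\dim\pi((J_K)_1)\le1$, with image vanishing at $p$. Theorem~\ref{thm:residual-linear-ideal} applies because $L$ is the unique one-dimensional component: there is no second Segre line, the plane is not a ruling plane, and a doubled line is ruled out by Lemma~\ref{lem:no-double-line}. That theorem gives $\dim(J_K)_1\ge2$. Comparing dimensions, $\ker\pi=(J_K)_1\cap\mathbb Cu$ must be nonzero, so $u\in J_K$. It also forces $\dim(J_K)_1=2$, since the image of $\pi$ has dimension at most one. We are therefore in case (iii), with $J_K=(u,v)$ and $\bar v$ spanning the image, so $V(u,v)=p$. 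The main obstacle I expect is the bookkeeping in the criterion: the $u$-coefficients coming from the first row of $A$ must not disturb the projection argument. They do not, because $\pi$ discards exactly those coefficients.
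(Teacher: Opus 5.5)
Your proof is correct, and it rests on the same criterion as the paper: for $p\in L$, $\mathbb P(K)\subset T_p\Sigma_{a,b}$ holds exactly when every residual form $\ell_j$ vanishes at $p$, i.e.\ $p\in V(J_K)$. Your condition that $\pi((J_K)_1)$ vanishes at $[s_0:t_0]$ is literally $\ell_j(s_0,t_0,0)=0$ for all $j$.

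What differs is how you derive this criterion. The paper gets it in one line from the product rule. The minors generate the ideal of the smooth variety $\Sigma_{a,b}$, so $T_p\Sigma_{a,b}$ is cut out by their differentials. On the plane, $\dd q_j(p)=\dd(u\ell_j)(p)=\ell_j(p)\,\dd u$ because $u(p)=0$. You instead use the parametric tangent space $e_1\otimes B+\mathbb C^a\otimes w$ and match it against an explicit list of the $\ell_j$ modulo $u$.

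Each route has an advantage. The paper's is coordinate-free and shows at once that only the values $\ell_j(p)$ matter. So the $u$-coefficients coming from the first row of the matrix drop out automatically; this is the bookkeeping you handle by passing to $\pi$. Yours avoids the Jacobian description of the tangent space and makes tangency concrete: the lower rows of the matrix must be proportional to $w^{\mathsf T}$.

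Your converse and uniqueness arguments spell out what the paper compresses into ``this is precisely the embedded case.'' For the converse you use $\dim(J_K)_1=\dim\ker\pi+\dim\operatorname{im}\pi\le2$. For uniqueness you note that tangency at two points of $L$ kills the lower rows. In the paper's language the converse reads as follows. $V(J_K)$ contains $p$, so it is nonempty and $\dim(J_K)_1\le2$, hence $=2$. Then $u\in(J_K)_1$ because $u$ vanishes at the single point $V(J_K)=\{p\}$.

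One notational point: in $e_1\otimes B+A\otimes w$ the letter $A$ must mean $\mathbb C^a$, not the matrix $A$ spanning $K$ with $E_{11},E_{12}$.
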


\begin{proof}
Write the restricted minors as $q_j=u\ell_j$. At $p\in L$,
\[
\dd q_j(p)=\ell_j(p)\,\dd u.
\]
All differentials vanish on the plane exactly when $\ell_j(p)=0$ for every $j$, that is, when $p=V(J_K)$. This is precisely the embedded case.
\end{proof}

The embedded configuration is nonempty already in $M_3$: with
\[
A_{\mathrm{emb}}=
\begin{pmatrix}
0&0&-1\\
-1&-1&0\\
-1&-1&0
\end{pmatrix},
\qquad
K_{\mathrm{emb}}=\Span\{E_{11},E_{12},A_{\mathrm{emb}}\},
\]
one computes $J_{K_{\mathrm{emb}}}=(t-s,u)$, so the embedded point is $[1:1:0]$.

\subsection{\chg{Hilbert strata and a flat degeneration from a secant configuration to a tangent configuration}}

For the \chg{rank one sections of positive dimension}, the possible Hilbert polynomials are
\[
\binom{d+2}{2},\qquad 2d+1,\qquad d+2,\qquad d+1.
\]
Indeed, they correspond respectively to a ruling plane, a smooth or reducible conic, a line with one residual point, reduced or embedded, and a reduced line.

The equality of the two $d+2$ cases reflects a flat degeneration. In $M_3$, let
\[
K_\varepsilon=\Span\{E_{11},E_{12},E_{13}+E_{21}+\varepsilon E_{23}\}.
\]
Then in coordinates $(s,t,u)$ the nonzero minors generate
\[
I_\varepsilon={I_{\min}(K_\varepsilon)}=(ut,u^2-\varepsilon su)=u(t,u-\varepsilon s).
\]
For $\varepsilon\ne0$, this is a reduced line plus the reduced point $[1:0:\varepsilon]$; at $\varepsilon=0$ it is
\[
(ut,u^2)=(u)\cap(t,u^2),
\]
a line with one embedded point. The graded algebra
\[
R=\mathbb C[\varepsilon,s,t,u]/(ut,u^2-\varepsilon su)
\]
has, for every $d\ge1$, the free $\mathbb C[\varepsilon]$ basis
\[
s^d,s^{d-1}t,\ldots,t^d,s^{d-1}u,
\]
while $R_0=\mathbb C[\varepsilon]$. Hence the family is flat.

\subsection{Finite \chg{rank one} schemes in \texorpdfstring{$M_3$}{M3}}\label{subsec:finite}

We now prove Theorem~\ref{thm:finite-main}. The key point is that a general quadratic system in $\mathbb P^2$ may have a base scheme of length four, as recalled in \cite[Lemma~4.1]{GesmundoKeneshlou2026}, but the quadrics arising as $2\times2$ minors of a $3\times3$ linear matrix satisfy the adjugate identity.

\chg{The next result isolates the determinantal mechanism that forces the sharp bound.}

\begin{theorem}\label{thm:length-bound}
Let $A(s,t,u)$ be a $3\times3$ matrix of linear forms whose coefficient matrices span a \chg{three dimensional} space $K\subset M_3$. If the scheme cut out by the $2\times2$ minors of $A$ in $\mathbb P^2$ is \chg{zero dimensional}, then its length is at most three.
\end{theorem}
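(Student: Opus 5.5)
The plan is to argue through the linear system of conics spanned by the restricted $2\times2$ minors, i.e.\ the entries of $\operatorname{adj}A$. Write $Z$ for the zero dimensional scheme and $W\subset R_2$ for the span of these nine quadrics, so that $Z$ is the base scheme of $W$.

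\textbf{Reduction to a pencil.} First I will show that length four already forces $\dim W\le 2$. The dimension of $W$ is at least two, since one conic cuts a curve. If $\dim W\ge 2$ and the base locus is finite, B\'ezout for two members without common component gives $\operatorname{length}Z\le 4$. Suppose $\operatorname{length}Z=4$. Then $Z$ is not contained in a line, since otherwise every conic through $Z$ would contain that line and $Z$ would not be finite. A length four scheme not on a line imposes four independent conditions on conics, so $h^0(\mathcal I_Z(2))=6-4=2$. Hence $W$ is a pencil $\langle q_1,q_2\rangle$, and $q_1,q_2$ have no common factor.

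\textbf{Case $\det A\not\equiv0$.} Write $\operatorname{adj}A=q_1M_1+q_2M_2$ with constant matrices $M_1,M_2$. I will apply the identity $\operatorname{adj}(\operatorname{adj}A)=\det(A)\,A$ for $3\times3$ matrices. The left side is the adjugate of a matrix that is linear in $(q_1,q_2)$, so its entries lie in $\operatorname{span}\{q_1^2,q_1q_2,q_2^2\}$. Since $\dim K=3$, the entries of $A$ span all of $R_1=\langle s,t,u\rangle$. Hence $\det(A)\,s$, $\det(A)\,t$ and $\det(A)\,u$ are three linearly independent quartics in this three dimensional span, and so
\[
\det(A)\cdot R_1=\operatorname{span}\{q_1^2,q_1q_2,q_2^2\}.
\]
The cubic $\det A$ divides every element of the left side. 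On the right side, the greatest common divisor of $q_1^2,q_1q_2,q_2^2$ is $\gcd(q_1,q_2)^2=1$. This contradicts unique factorization, so length four cannot occur when $\det A\not\equiv 0$.

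\textbf{Case $\det A\equiv0$.} Then $A\cdot\operatorname{adj}A=\operatorname{adj}A\cdot A=0$. Also $\operatorname{adj}A\ne0$, because otherwise $Z=\mathbb P^2$. Hence $A$ has generic rank two and $\operatorname{adj}A$ has generic rank one, so $\operatorname{adj}A=xy^{\mathsf T}$ with $x,y$ vectors of forms whose degrees sum to two.
\begin{itemize}
\item If $\deg x=\deg y=1$ and the entries of $x$ span only one linear form $\ell$, then $\ell$ divides every minor, which is impossible for finite $Z$. The same holds for $y$. Otherwise each of $x,y$ spans at least two linear forms, and products of two independent linear forms with two independent linear forms span at least three quadrics. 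So $\dim W\ge3$, and the pencil reduction excludes length four.
\item If $\deg x=0$, then $Ax=0$ for a constant vector $x$. After a change of column basis, $A=(A'\mid 0)$ with $A'$ a $3\times2$ matrix of linear forms. Its $2\times2$ minors are the three maximal minors of $A'$. Since their zero scheme has codimension two, Hilbert--Burch and the Eagon--Northcott resolution $0\to R(-3)^2\to R(-2)^3\to R$ give Hilbert polynomial $3$, so $\operatorname{length}Z=3$.
\item The case $\deg y=0$ is the transpose of the previous one.
\end{itemize}

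The main obstacle is the nondegenerate case: one must make sure that the span of $\det(A)\,s,\det(A)\,t,\det(A)\,u$ really fills the span of $q_1^2,q_1q_2,q_2^2$. This rests on the entries of $A$ spanning $R_1$, which uses $\dim K=3$, and on those three quadratic monomials in $q_1,q_2$ being independent, which holds because $q_1,q_2$ are coprime. I would check both points carefully. I would also verify the B\'ezout bound for a nonreduced base scheme through the complete intersection inclusion $(q_1,q_2)\subset I(Z)$.
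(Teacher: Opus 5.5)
Your proof is correct. After the shared first step (B\'ezout, then length four forcing the minors to span exactly the pencil $\langle q_1,q_2\rangle$), it takes a different route from the paper.

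The paper does not split on $\det A$. It substitutes $\operatorname{adj}A=q_1B_1+q_2B_2$ into $A\operatorname{adj}A=(\det A)I_3$ and uses coprimality and degrees to get $AB_j=\ell_jI_3$. It then kills $\ell_j$ via $\dim K=3$, since otherwise $A=\ell_1B_1^{-1}$, and so $\det A=0$. Finally $AB_j=B_jA=0$ yields a constant right kernel vector and a constant left kernel covector. Hence $A$ lives in a $2\times2$ corner whose minors span at most one quadric, a contradiction.

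You instead rule out the case $\det A\not\equiv0$ with the double adjugate identity $\operatorname{adj}(\operatorname{adj}A)=\det(A)A$. Here $\dim K=3$ enters as the statement that the entries of $A$ span $R_1$. So $\det(A)R_1$ fills $\operatorname{span}\{q_1^2,q_1q_2,q_2^2\}$, and the cubic $\det A$ divides both $q_1^2$ and $q_2^2$. Only the dimension count is needed for this, not independence of the three products. You then handle $\det A\equiv0$ through the factorization $\operatorname{adj}A=xy^{\mathsf T}$ and its degree splittings.

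The paper's argument is more uniform and elementary: it is a single syzygy comparison. It needs no rank one factorization over $\mathbb C[s,t,u]$, which you should justify by the usual primitive vector argument. It needs no Eagon--Northcott resolution. It also needs no independent conditions lemma; you can replace that lemma, as the paper does, by the fact that $Z=V(q_1,q_2)$ is a saturated complete intersection.

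Your argument gives more structural information about the degenerate case. When $x,y$ are linear, the minors span at least three quadrics. When $A$ has a constant one sided kernel vector, the length is exactly three.
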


\begin{proof}
Let $Z$ be the \chg{rank one} scheme. The homogeneous ideal {$I_{\min}(K)$} generated by the restricted minors has no common curve factor. \newrev{Equivalently, the linear system spanned by these quadrics has no fixed curve component. By the standard genericity argument for plane quadratic systems, as in \cite[Lemma~4.1]{GesmundoKeneshlou2026}, two general quadrics $q_1,q_2$ in this span have no common curve component.} Their complete intersection has length four, so B\'ezout gives
\[
\operatorname{length}Z\le4.
\]
Assume for contradiction that the length is four. Since $Z\subset V(q_1,q_2)$ and both \chg{zero dimensional} schemes have length four, the inclusion is an equality \chg{as schemes}. Moreover, $(q_1,q_2)$ is a saturated \chg{complete intersection} ideal, so
\[
I(Z)_2=\Span\{q_1,q_2\}.
\]
Every entry of $\operatorname{adj}A$ is a $2\times2$ minor. \newrev{Since every such minor belongs to $I(Z)_2=\Span\{q_1,q_2\}$,} there are constant matrices $B_1,B_2$ such that
\[
\operatorname{adj}A=q_1B_1+q_2B_2.
\]
Using
\[
A\operatorname{adj}A=(\det A)I_3
\]
and comparing \chg{off diagonal} entries gives
\[
q_1(AB_1)_{ij}+q_2(AB_2)_{ij}=0\qquad(i\ne j).
\]
The two quadrics are coprime, whereas the entries of $AB_1$ and $AB_2$ are linear. \newrev{Indeed, if $q_1L_1+q_2L_2=0$ with $L_1,L_2$ linear, then $q_1\mid L_2$ because $\gcd(q_1,q_2)=1$; the degree inequality forces $L_2=0$, and then $L_1=0$.} Hence all \chg{off diagonal} entries of $AB_1$ and $AB_2$ vanish. Comparing two diagonal entries in the same way shows that
\[
AB_1=\ell_1 I_3,\qquad AB_2=\ell_2 I_3
\]
for linear forms $\ell_1,\ell_2$.

\chg{Suppose $\ell_1$ is not the zero form. Choose a point $x\in\mathbb P^2$ with $\ell_1(x)\ne0$. Evaluating $A B_1=\ell_1I_3$ at $x$ shows that $B_1$ is invertible. The polynomial identity then gives $A=\ell_1B_1^{-1}$, so the coefficient matrices of $A$ span a one dimensional space, contradicting $\dim K=3$. Hence $\ell_1=0$, and the same argument gives $\ell_2=0$.}
The identity $A\operatorname{adj}A=(\det A)I_3$ now gives $\det A=0$. Using the adjugate identity on the other side yields
\[
0=(\operatorname{adj}A)A=q_1(B_1A)+q_2(B_2A).
\]
The same \chg{coprimeness and degree argument}, applied entrywise, gives $B_1A=B_2A=0$.

\chg{At least one of $B_1,B_2$ is nonzero, because otherwise $\operatorname{adj}A=0$, so all $2\times2$ minors would vanish identically, contradicting the choice of two independent quadrics $q_1,q_2$.} Choose a nonzero column $v$ and a nonzero row $\lambda$ of a nonzero $B_j$. Then
\[
Av=0,\qquad \lambda A=0.
\]
Thus $A$ has a common right kernel vector and a common left kernel covector. After independent changes of row and column bases, $A$ is supported in a $2\times2$ corner. Its restricted $2\times2$ minors therefore span a space of dimension at most one. Independent row and column changes preserve the dimension of this span, whereas the original span contains the independent quadrics $q_1,q_2$. This contradiction proves the bound.
\end{proof}

\chg{The comparison with general systems of plane quadrics can now be stated without adding an explanatory title to the formal result.}

\begin{remark}\label{rem:length-four}
Lemma~4.1 of \cite{GesmundoKeneshlou2026} treats \chg{zero dimensional} base loci of general linear systems of plane quadrics and includes a length four case. The preceding proof \newrev{gives a direct intrinsic reason} why this formal stratum disappears for the $2\times2$ minors of a genuine $3\times3$ linear matrix: the adjugate identity forces extra syzygies, and these syzygies exclude length four. \newrev{Thus the contribution here is the determinantal mechanism itself, rather than a claim that the absence of this stratum could not be inferred from existing tensor orbit classifications.} Equivalently, the \chg{length four quadratic base locus case} in the general collineation picture cannot be realized by a \chg{three dimensional} subspace of $M_3$ through its \chg{rank one} determinantal section.
\end{remark}

\chg{The remaining schemes can be realized by explicit linear matrix representatives.}

\begin{proposition}\label{prop:finite-representatives}
The eight types in Theorem~\ref{thm:finite-main} all occur. In homogeneous coordinates $[s:t:u]$ on the matrix net, the following representatives have the indicated \chg{rank one} schemes:
\[
\begin{array}{c|c}
\text{scheme type}&A(s,t,u)\\ \hline
\varnothing&\begin{pmatrix}0&s&t\\-s&0&u\\-t&-u&0\end{pmatrix}\\[2.0ex]
1\text{ point}&\begin{pmatrix}u&s&t\\0&0&s\\0&t&0\end{pmatrix}\\[2.0ex]
2\text{ reduced}&\begin{pmatrix}t&s&0\\-s&u&0\\0&0&s\end{pmatrix}\\[2.0ex]
\text{double point}&\begin{pmatrix}u&s&0\\s&t&0\\0&0&t\end{pmatrix}\\[2.0ex]
3\text{ reduced}&\operatorname{diag}(s,t,u)\\[1.0ex]
2+1&\begin{pmatrix}u&0&s\\0&t&0\\s&0&0\end{pmatrix}\\[2.0ex]
\text{curvilinear length }3&\begin{pmatrix}u&t&s\\t&s&0\\s&0&0\end{pmatrix}\\[2.0ex]
\mathfrak m^2&\begin{pmatrix}u&s&t\\s&0&0\\t&0&0\end{pmatrix}
\end{array}
\]
\end{proposition}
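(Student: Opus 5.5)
The verification is a direct computation, one representative at a time. For each matrix $A(s,t,u)$ in the table I will do four things. First, check that the three coefficient matrices (of $s$, $t$, $u$) are linearly independent, so that they span a three dimensional $K\subset M_3$. Second, list the nonzero $2\times2$ minors; by Theorem~\ref{thm:length-bound} it is enough to work with $I_{\min}(K)$. Third, find their common zero set in $\mathbb P^2$. Fourth, identify the scheme structure locally at each point. Since $Z_K$ is zero dimensional in every case, its length is the sum of the local lengths. The classification of finite subschemes of length at most three in $\mathbb P^2$ is standard: reduced points, curvilinear double points, curvilinear length three, or $\mathfrak m_p^2$. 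So each type is pinned down by the local length together with the Hilbert function of the local algebra, i.e.\ its embedding dimension.

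The reduced cases are quick.
\begin{itemize}
\item For $\operatorname{diag}(s,t,u)$ the minors are $st,su,tu$. They cut out the three coordinate points, and the scheme is reduced because $(st,su,tu)$ is the saturated ideal of those points.
\item For the skew matrix the minors include $s^2,t^2,u^2$ (the diagonal entries of the adjugate, up to sign). The zero set is therefore empty.
\item For the ``1 point'' and ``2 reduced'' representatives I will dehomogenize at each candidate point. I expect the minors to contain two local coordinates to first order, which gives reducedness.
\end{itemize}

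For the nonreduced types I will work in an affine chart centered at the support point and compute the local ideal. As an illustration, take the $\mathfrak m^2$ representative. Its minors include $s^2,st,t^2$ (from rows and columns $1,2$ and $1,3$) and $0$ (from the lower block). Every minor of the lower right block is $0$. The remaining minors of type rows $\{1,i\}$ by columns $\{j,k\}$ with $j,k\ge2$ give products of $s,t$. So $I_{\min}=(s,t)^2$, which is $\mathfrak m_p^2$ at $p=[0:0:1]$.

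The remaining nonreduced cases go the same way:
\begin{itemize}
\item For the curvilinear length three representative (a Hankel-type matrix), the minors should produce $s^2$, $st$, and $t^2-us$. In the chart $u=1$ this gives $(s-t^2,\,t^3)$ modulo higher terms, which is curvilinear of length three. Its span is all of $\mathbb P^2$ because the scheme is not contained in a line: $s-t^2$ is not linear.
\item For the double point and the ``$2+1$'' representatives, I will find the support points and check at the nonreduced one that the local ideal is $(x,y^2)$. For the ``$2+1$'' case this should be at $[0:0:1]$, with the reduced point at $[0:1:0]$, and the two must span the plane.
\end{itemize}

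The only real obstacle is bookkeeping: making sure no extra zeros are missed, and that the local ideal is computed from all minors rather than a subset. To guard against both, in each case I will also compare the total length with the Hilbert polynomial via $I(Z_K)=I_{\min}(K)^{\sat}$. That check is consistent with Theorem~\ref{thm:length-bound}'s bound of three.
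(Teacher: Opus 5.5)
Your proposal is correct and is essentially the paper's own proof. The paper also lists the restricted minors for each representative, passes to the saturation $I(Z_K)$, and reads off the local algebras on the chart $u=1$; for example $(s-t^2,st,s^2)=(s-t^2,t^3)$ and $(tu,st,s^2)=(t,s^2)\cap(s,u)$.

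Two small fixes are needed. First, $Z_K=\operatorname{Proj}(R/I_{\min}(K))$ holds by definition, not by Theorem~\ref{thm:length-bound}. Second, the curvilinear triple spans $\mathbb P^2$ because any linear form $\alpha s+\beta t$ becomes $\alpha t^2+\beta t\neq0$ in $\mathbb C[t]/(t^3)$; the remark that ``$s-t^2$ is not linear'' does not by itself prove this.

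The paper's proof additionally contains the completeness argument that excludes a collinear length three scheme. That argument belongs to Theorem~\ref{thm:finite-main} and is not required for the statement as posed.
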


\begin{proof}
{Direct calculation gives the following ideals $I_{\min}(K)$ generated by the restricted minors. The last column records their saturations $I(Z_K)$.}
\[
\begin{array}{c|c|c}
{\text{scheme type}}&{I_{\min}(K)}&{I(Z_K)}\\ \hline
\varnothing&(s,t,u)^2&{(1)}\\
1\text{ point}&(su,s^2,tu,t^2,st)&{(s,t)}\\
2\text{ reduced}&(s^2,st,su,tu)&{(s,tu)}\\
\text{double point}&(s^2,st,t^2,tu)&{(t,s^2)}\\
3\text{ reduced}&(st,su,tu)&{(st,su,tu)}\\
2+1&(tu,st,s^2)&{(tu,st,s^2)}\\
\text{curvilinear }3&(su-t^2,st,s^2)&{(su-t^2,st,s^2)}\\
\mathfrak m^2&(s^2,st,t^2)&{(s^2,st,t^2)}.
\end{array}
\]
{
The saturated ideals have the asserted scheme structures. On the chart $u=1$, the double point has algebra $\mathbb C[s]/(s^2)$; the curvilinear triple has $s=t^2$ and $st=0$, hence algebra $\mathbb C[t]/(t^3)$; and the planar fat point has algebra $\mathbb C[s,t]/(s,t)^2$. For the $2+1$ configuration,
\[
(tu,st,s^2)=(t,s^2)\cap(s,u),
\]
so the double point at $[0:0:1]$ has tangent line $t=0$, whereas the reduced point $[0:1:0]$ lies off that line.

For completeness, let $Z\subset\mathbb P^2$ be finite of length at most three. Length two gives either two reduced points or a curvilinear double point. For length three, the support multiplicities are $1+1+1$, $2+1$, or $3$. In the last case, the local algebra $A$ has maximal ideal $\mathfrak m$ of vector space dimension two. If $\dim\mathfrak m/\mathfrak m^2=1$, then $A\simeq\mathbb C[\epsilon]/(\epsilon^3)$. If this dimension is two, then $\mathfrak m^2=0$ and $A\simeq\mathbb C[\epsilon,\eta]/(\epsilon,\eta)^2$; as a subscheme of the plane, this is the fat point defined by $\mathfrak m_p^2$.

For a curvilinear triple at $p$, choose affine linear coordinates $x,y$ vanishing at $p$ such that the image of $x$ generates $\mathfrak m$ and the image of $y$ lies in $\mathfrak m^2$. Thus $y=cx^2$ in $A$. If $c=0$, the scheme lies on the line $y=0$. If $c\ne0$, rescaling $y$ gives the local ideal $(y-x^2,x^3)$, whose saturated homogeneous ideal in coordinates $[s:t:u]=[y:x:1]$ is $(su-t^2,st,s^2)$. A double point and a distinct reduced point are contained in a line exactly when the reduced point lies on the tangent line of the double point; otherwise a projective change of coordinates gives the displayed $2+1$ configuration. Three noncollinear reduced points are projectively equivalent to the coordinate points. Consequently, every finite plane scheme of length at most three is projectively equivalent to a listed type or is a scheme of length three contained in a line. Only the latter alternative must be excluded.
} A length three scheme contained in a line cannot occur here: every restricted $2\times2$ minor is a quadric, and a quadratic on $\mathbb P^1$ vanishing on a length three subscheme vanishes identically, forcing that line to be a \chg{rank one component of positive dimension}. This yields exactly the stated list.
\end{proof}

\chg{Combining the positive dimensional and finite classifications gives the complete Hilbert polynomial list in $M_3$.}

\begin{corollary}\label{cor:hilbert-list}
For a \chg{three dimensional} $K\subset M_3$, the Hilbert polynomial of $Z_K=\mathbb P(K)\cap\Sigma_{3,3}$ is one of
\[
0,\quad1,\quad2,\quad3,\quad d+1,\quad d+2,\quad2d+1,\quad\binom{d+2}{2}.
\]
Every polynomial in the list occurs.
\end{corollary}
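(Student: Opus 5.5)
The plan is to split according to $\dim Z_K$ and read off each Hilbert polynomial from classifications already proved in the paper. Realizability is then checked case by case.

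\emph{Zero dimensional case.} Theorem~\ref{thm:finite-main} (via Theorem~\ref{thm:length-bound}) gives $\operatorname{length}Z_K\le 3$. The Hilbert polynomial of a finite scheme is the constant equal to its length, so the only possible values are $0,1,2,3$. By Proposition~\ref{prop:finite-representatives}, each is attained by explicit $3\times3$ linear matrices: the empty scheme gives $0$, one point gives $1$, two reduced points give $2$, and $\operatorname{diag}(s,t,u)$ gives $3$.

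\emph{Positive dimensional case.} By Theorem~\ref{thm:main-positive-dimensional} with $a=b=3$, the scheme $Z_K$ is one of six types, and I compute the Hilbert polynomial of each.

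\begin{itemize}
\item A ruling plane is all of $\mathbb P^2$, giving $\binom{d+2}{2}$.
\item A smooth conic, or two Segre lines from opposite rulings, is a plane conic, cut out by a single quadric. Its polynomial is $\binom{d+2}{2}-\binom{d}{2}=2d+1$.
\item A reduced line has saturated ideal $(u)$ by Theorem~\ref{thm:residual-linear-ideal}(i), giving $d+1$.
\item A line plus a reduced point off it has ideal $(u)\cap J_K$. Since the union is disjoint, the polynomial is $(d+1)+1=d+2$.
\item In the embedded case the exact sequence in the proof of Theorem~\ref{thm:residual-linear-ideal}(iii) gives $(d+1)+1=d+2$.
\end{itemize}

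For realizability, note that $\Sigma_{3,3}$ contains ruling planes, so $\binom{d+2}{2}$ occurs, for instance with $K=\Span\{E_{11},E_{12},E_{13}\}$. The traceless hyperplane of the $2\times2$ corner compression gives $2d+1$ by Theorem~\ref{thm:smooth-conic}. The family $K_\varepsilon$ with $\varepsilon\ne0$, or the example $K_{\mathrm{emb}}$, gives $d+2$.

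The only step that needs any checking is the $d+1$ case: I must exhibit a net in $M_3$ with $\dim(J_K)_1=3$. The natural candidate is $K=\Span\{E_{11},E_{12},E_{22}+E_{33}\}$, the net used in the proof of Theorem~\ref{thm:global-components}. I would verify this directly. With $X=\begin{pmatrix}s&t&0\\0&u&0\\0&0&u\end{pmatrix}$, the minors include
\begin{itemize}
\item rows $1,2$, columns $1,2$: $su$;
\item rows $1,2$, columns $2,3$: $tu$;
\item rows $2,3$, columns $2,3$: $u^2$.
\end{itemize}
Hence $J_K=(s,t,u)$ and the polynomial is $d+1$. This completes the list and shows every polynomial occurs.
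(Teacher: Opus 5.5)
Your argument is correct and is essentially the paper's: you combine the length bound and the representatives of Theorem~\ref{thm:finite-main} and Proposition~\ref{prop:finite-representatives} with the Hilbert polynomials of the six positive dimensional types, realized by a ruling plane, the traceless corner, $K_\varepsilon$ or $K_{\mathrm{emb}}$, and the same net $\Span\{E_{11},E_{12},E_{22}+E_{33}\}$. One small slip: the minor of your $X$ on rows $1,2$ and columns $2,3$ is zero, and $tu$ comes instead from rows $1,3$ and columns $2,3$; this still gives $I_{\min}(K)=(su,tu,u^2)=u(s,t,u)$, so your conclusion $J_K=(s,t,u)$ stands.
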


\section{Positive map realizations and exact separation}\label{sec:positive-maps}

\subsection{A \chg{diagonal type} obstruction}

Let $D_n\subset M_n$ be the diagonal subspace and $O_n$ the \chg{off diagonal} subspace, so $M_n=D_n\oplus O_n$. We say that a linear map $\Phi:M_n\to M_n$ has \chg{diagonal type} with nondegenerate \chg{off diagonal} part when
\[
\Phi(D_n)\subset D_n,\qquad
\Phi(O_n)\subset O_n,\qquad
\Phi|_{O_n}\text{ is invertible}.
\]

\begin{theorem}\label{thm:diagonal-obstruction}
If $\Phi:M_n\to M_n$ has \chg{diagonal type} with nondegenerate \chg{off diagonal} part, then
\[
K_1(\Phi)\subset\{[E_{11}],\ldots,[E_{nn}]\}.
\]
In particular $K_1(\Phi)$ is finite {and reduced}, possibly empty. The same conclusion holds for $\Phi^*$ with respect to the Hilbert--Schmidt pairing.
\end{theorem}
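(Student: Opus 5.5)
The plan is to describe $\ker\Phi$ explicitly, then read off its rank one elements. Write $X=X_D+X_O$ with $X_D\in D_n$ and $X_O\in O_n$. Since $\Phi(D_n)\subset D_n$ and $\Phi(O_n)\subset O_n$, and $D_n\cap O_n=0$, we have $\Phi(X)=0$ exactly when $\Phi(X_D)=0$ and $\Phi(X_O)=0$. Invertibility of $\Phi|_{O_n}$ forces $X_O=0$. Hence $\ker\Phi\subset D_n$, and $\mathbb P(\ker\Phi)\subset\mathbb P(D_n)$.

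Next, a rank one matrix that is diagonal has exactly one nonzero diagonal entry. So set-theoretically
\[
K_1(\Phi)\subset\mathbb P(D_n)\cap\Sigma_{n,n}=\{[E_{11}],\ldots,[E_{nn}]\}.
\]
For the scheme structure, restrict the $2\times2$ minors to $\mathbb P(D_n)$ with coordinates $x_1,\dots,x_n$. The principal minors give $x_ix_j$ for $i\ne j$, and all other minors vanish identically. The ideal $(x_ix_j)_{i<j}$ is the ideal of the $n$ coordinate points, which is radical. So $\mathbb P(D_n)\cap\Sigma_{n,n}$ is already the reduced set of coordinate points. Intersecting further with the linear subspace $\mathbb P(\ker\Phi)$ gives a closed subscheme of a reduced finite scheme. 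Such a subscheme is a union of some of these reduced points, so $K_1(\Phi)$ is finite, reduced, and possibly empty.

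For the adjoint, use that $D_n$ and $O_n$ are mutually orthogonal for the Hilbert--Schmidt pairing. Take $Y\in O_n$ and $X\in D_n$. Then $\langle\Phi^*(Y),X\rangle=\langle Y,\Phi(X)\rangle=0$, because $\Phi(X)\in D_n$. So $\Phi^*(O_n)\subset O_n$, and symmetrically $\Phi^*(D_n)\subset D_n$. Moreover, $\Phi^*|_{O_n}$ is the adjoint of the invertible map $\Phi|_{O_n}$ with respect to the restricted nondegenerate pairing, so it is invertible. This holds whether one uses the sesquilinear or the bilinear convention, since conjugation preserves $O_n$. Thus $\Phi^*$ is also of diagonal type and the first part applies.

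The only point needing real care is the scheme-theoretic reducedness. The argument relies on the restricted minor ideal on $\mathbb P(D_n)$ being exactly the radical ideal $(x_ix_j)$. It also relies on a closed subscheme of a reduced zero-dimensional scheme being reduced, which holds because such a scheme is a disjoint union of reduced points.
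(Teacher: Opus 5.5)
Your proposal is correct and follows essentially the same route as the paper. Both arguments force $\ker\Phi\subset D_n$ by invertibility on $O_n$, observe that the restricted minor ideal $(x_ix_j)_{i<j}$ on $\mathbb P(D_n)$ is the radical ideal of the coordinate points, and handle $\Phi^*$ via Hilbert--Schmidt orthogonality of $D_n$ and $O_n$; you simply spell out a few steps the paper leaves terse, such as checking that $\Phi^*$ preserves both summands and that a closed subscheme of a reduced finite scheme is reduced.
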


\begin{proof}
Write $X=D+O$. If $X\in\ker\Phi$, then $0=\Phi(D)+\Phi(O)$ with the two summands in complementary subspaces, so $\Phi(O)=0$ and hence $O=0$. Thus $\ker\Phi\subset D_n$. A nonzero diagonal matrix has \chg{rank one} exactly when one diagonal coordinate is nonzero. {At the scheme level, the Segre ideal restricts to $(x_ix_j:i<j)$ on $\mathbb P(D_n)$, the radical ideal of the coordinate points. Intersecting this reduced finite scheme with $\mathbb P(\ker\Phi)$ only retains the points contained in that linear space, so the intersection is reduced.} The statement for the adjoint follows because $D_n$ and $O_n$ are orthogonal and the adjoint of the invertible restriction to $O_n$ is invertible.
\end{proof}

\chg{The obstruction immediately separates smooth conic kernels from this large class of Choi type maps.}

\begin{corollary}\label{cor:diagonal-separation}
\newrev{No linear map $\Psi$ for which $K_1(\Psi)$ is a smooth conic spanning $\mathbb P(\ker\Psi)$ can be} \chg{determinantally equivalent at rank one} to a map of \chg{diagonal type} with nondegenerate \chg{off diagonal} part.
\end{corollary}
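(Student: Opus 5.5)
The argument is by contradiction, combining Proposition~\ref{prop:det-equivalence} with Theorem~\ref{thm:diagonal-obstruction}. Suppose $\Psi$ is determinantally equivalent at rank one to a map $\Phi$ of diagonal type with nondegenerate off diagonal part, say $\Phi=S\circ\Psi\circ T$. Here $S$ is invertible and $\mathbb P(T)$ preserves $D_1(n,n)=\Sigma_{n,n}$. By Proposition~\ref{prop:det-equivalence}, $K_1(\Phi)=\mathbb P(T^{-1})(K_1(\Psi))$ is a projective image of $K_1(\Psi)$. A projective linear isomorphism preserves dimension as a scheme invariant, so $K_1(\Phi)$ is one dimensional; indeed it is again a smooth conic spanning $\mathbb P(\ker\Phi)$.

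On the other hand, Theorem~\ref{thm:diagonal-obstruction} says that $K_1(\Phi)$ is contained in the finite set $\{[E_{11}],\ldots,[E_{nn}]\}$. Hence $K_1(\Phi)$ is zero dimensional or empty. A smooth conic is an irreducible curve of dimension one and cannot lie inside a finite set, which is the contradiction.

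The statement uses equivalence only via $\Phi=S\circ\Psi\circ T$, and the relation is symmetric, so the direction of the equivalence is irrelevant. I also need that the scheme structures match. Proposition~\ref{prop:det-equivalence} gives projective isomorphism of the induced determinantal schemes, and even the underlying sets suffice, since a conic has infinitely many points.

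I do not expect a real obstacle; this is a short corollary. The only point to check is that the equivalence in Proposition~\ref{prop:det-equivalence} is taken with the same $r=1$ and with maps $M_n\to M_n$, which matches the setting of Theorem~\ref{thm:diagonal-obstruction}.
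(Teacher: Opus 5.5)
Your proposal is correct and follows the same approach as the paper. Proposition~\ref{prop:det-equivalence} transports the smooth conic $K_1(\Psi)$ projectively onto $K_1(\Phi)$, while Theorem~\ref{thm:diagonal-obstruction} forces $K_1(\Phi)$ to be finite, which gives the contradiction; your remark that underlying point sets already suffice is fine.
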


\begin{proof}
\newrev{By Theorem~\ref{thm:diagonal-obstruction}, the rank one kernel scheme of a map of diagonal type with nondegenerate off diagonal part is finite and reduced, whereas $K_1(\Psi)$ is a positive dimensional smooth conic. Proposition~\ref{prop:det-equivalence} shows that determinantally equivalent maps at rank one have projectively isomorphic rank one kernel schemes, which is impossible.}
\end{proof}

The architecture
\[
\Phi_A(X)=D_A(X)-X,
\]
where $D_A(X)$ is diagonal and depends only on the diagonal of $X$, is automatically of this type. This includes the classical generalized Choi constructions \cite{ChoKyeLee1992,ChruscinskiMarciniakRutkowski2018} and the doubly stochastic extension of \cite{Bera2025}.

\subsection{A positive family realizing the \chg{smooth conic} stratum}\label{subsec:positive-family}

Consider $\Psi_{p,q;r,t}$ from \eqref{eq:Psi}. The Miller and Olkiewicz map appears, after a simple diagonal output scaling, at the boundary point $(p,q,r,t)=(1,1,1,1)$; see \cite{MillerOlkiewicz2015,MarciniakRutkowski2017}. We use the \chg{four parameter} family as an exact realization of the geometric stratum; the construction itself is a \chg{scalar block} specialization of the merging framework and is not claimed as new.

To make the normalization inside the merging construction explicit, take $K_i=H_i=\mathbb C$ for $i=1,2$ in \cite[Definition~3.2 and (3.1)]{MarciniakRutkowski2017}, with
\[
\begin{gathered}
\phi_1(z)=\phi_2(z)=\omega_1(z)=\omega_2(z)=pz,\\
B_1=\frac r{\sqrt q},\qquad C_2=\frac t{\sqrt q},\qquad C_1=B_2=0.
\end{gathered}
\]
If $\phi$ denotes the resulting merging map and $D_q=\operatorname{diag}(1,1,\sqrt q)$, then direct substitution gives
\[
\Psi_{p,q;r,t}(X)=D_q\phi(X)D_q.
\]
For the Miller and Olkiewicz map $\phi_{\mathrm{MO}}$ in \cite[Example~3.4]{MarciniakRutkowski2017}, the corresponding output congruence is
\[
\Psi_{1,1;1,1}(X)=D_{\mathrm{MO}}\phi_{\mathrm{MO}}(X)D_{\mathrm{MO}},
\qquad
D_{\mathrm{MO}}=\operatorname{diag}(\sqrt2,\sqrt2,1).
\]

\chg{The following sum of squares identity gives the exact positivity region directly.}

\begin{theorem}\label{thm:exact-positivity}
For $p,q>0$ and $r,t\ge0$, the map $\Psi_{p,q;r,t}$ is positive if and only if
\[
r^2\le pq,\qquad t^2\le pq.
\]
More precisely, for $x=(a,b,c)^{\mathsf T}$, $y=(u,v,w)^{\mathsf T}$, and $\sigma=|a|^2+|b|^2$,
\begin{align}
 p\sigma\,y^*\Psi_{p,q;r,t}(xx^*)y
 &=|p\sigma u+r a\bar c\,w|^2+|p\sigma v+t c\bar b\,w|^2 \label{eq:sos-positivity}\\
 &\quad +(pq-r^2)|acw|^2+(pq-t^2)|bcw|^2.\nonumber
\end{align}
\end{theorem}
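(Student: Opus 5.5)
Positivity of a linear map on $M_3$ is equivalent to $y^*\Psi(xx^*)y\ge0$ for all vectors $x,y\in\mathbb C^3$, since rank one projections generate the positive cone. The plan is therefore to prove the identity \eqref{eq:sos-positivity} first and then read off both directions of the criterion from it.

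\textbf{Step 1: compute the quadratic form.} For $X=xx^*$ with $x=(a,b,c)^{\mathsf T}$ we have $x_{ij}=x_i\bar x_j$. Hence $x_{11}+x_{22}=\sigma$, $x_{13}=a\bar c$, $x_{32}=c\bar b$, $x_{31}=c\bar a$, $x_{23}=b\bar c$ and $x_{33}=|c|^2$. Substituting into \eqref{eq:Psi} gives
\[
y^*\Psi(xx^*)y=p\sigma(|u|^2+|v|^2)+q|c|^2|w|^2+2\operatorname{Re}\bigl(r a\bar c\,\bar u w\bigr)+2\operatorname{Re}\bigl(t c\bar b\,\bar v w\bigr).
\]
The second cross term comes from the $(2,3)$ entry $t x_{32}$ and its conjugate entry $t x_{23}$ at position $(3,2)$. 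The sign and conjugation pattern in these two cross terms has to be checked carefully against the squares in \eqref{eq:sos-positivity}.

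\textbf{Step 2: complete the squares.} Multiply by $p\sigma$. Expanding $|p\sigma u+ra\bar c w|^2$ produces $p^2\sigma^2|u|^2$, the cross term $p\sigma\cdot 2\operatorname{Re}(ra\bar c\,\bar u w)$, and $r^2|a|^2|c|^2|w|^2$. The second square behaves the same way, with $t$, $c\bar b$ and $v$ in place of $r$, $a\bar c$ and $u$. What remains is
\[
\bigl(pq\sigma-r^2|a|^2-t^2|b|^2\bigr)|c|^2|w|^2
=(pq-r^2)|acw|^2+(pq-t^2)|bcw|^2 ,
\]
which uses $\sigma=|a|^2+|b|^2$. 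This proves the identity.

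\textbf{Step 3: sufficiency.} Assume $r^2\le pq$ and $t^2\le pq$. If $\sigma>0$, every term on the right of \eqref{eq:sos-positivity} is nonnegative and $p\sigma>0$, so $y^*\Psi(xx^*)y\ge0$. If $\sigma=0$, then $a=b=0$ and the form reduces directly to $q|c|^2|w|^2\ge0$; no continuity argument is needed.

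\textbf{Step 4: necessity.} Suppose $r^2>pq$. Take $b=0$ and $a=c=w=1$, so $\sigma=1$, and choose $u=-r/p$ and $v=0$. Then both squares vanish, since the second one carries the factor $b$, and the last term is zero. The right-hand side equals $pq-r^2<0$, so positivity fails. The case $t^2>pq$ is symmetric: take $a=0$ and $b=c=w=1$, and choose $u=0$ and $v=-t/p$.

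\textbf{Main obstacle.} The only delicate point is the conjugation bookkeeping in Step 1. The entry $t x_{32}$ sits at position $(2,3)$ and pairs $\bar v$ with $w$, so the correct square is $|p\sigma v+tc\bar b\,w|^2$ and not a variant with $\bar c b$. I would verify this by expanding both sides explicitly once.
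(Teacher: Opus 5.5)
Your proposal is correct and follows the paper's proof. You verify \eqref{eq:sos-positivity} by direct expansion, and your conjugation bookkeeping in Step~1 checks out. You read sufficiency off the nonnegative terms, handling $\sigma=0$ separately. Necessity comes from specializing to $b=0$ or $a=0$, and your explicit vectors $y=(-r/p,0,1)^{\mathsf T}$ and $y=(0,-t/p,1)^{\mathsf T}$ are the concrete form of the paper's appeal to the corresponding principal $2\times2$ blocks of $\Psi(xx^*)$.
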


\begin{proof}
Expanding the right hand side gives
\[
p^2\sigma^2(|u|^2+|v|^2)+pq\sigma|c|^2|w|^2
+2pr\sigma\Re(\bar u\,a\bar c\,w)
+2pt\sigma\Re(\bar v\,c\bar b\,w),
\]
which is exactly $p\sigma\,y^*\Psi(xx^*)y$. Under the two inequalities the expression is nonnegative; the case $\sigma=0$ is immediate. Conversely, setting $b=0$ and examining the $(1,3)$ principal block of $\Psi(xx^*)$ forces $r^2\le pq$, while setting $a=0$ forces $t^2\le pq$.
\end{proof}

\chg{For {$p,q,r,t>0$}, the kernel does not vary with the parameters and is the normal form identified in Section~\ref{sec:geometry}.}

\begin{proposition}\label{prop:kernel}
If $p,q,r,t>0$, then
\[
\ker\Psi_{p,q;r,t}=K_0=
\left\{
\begin{pmatrix}
z&x&0\\
y&-z&0\\
0&0&0
\end{pmatrix}:x,y,z\in\mathbb C
\right\}.
\]
Consequently its \chg{rank one} kernel scheme is the smooth conic $z^2+xy=0$.
\end{proposition}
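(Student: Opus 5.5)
The plan is to compute the kernel directly from the matrix entries of \eqref{eq:Psi}, and then read off the rank one scheme from the $2\times2$ compression. Since $\Psi_{p,q;r,t}$ acts entrywise on independent groups of coordinates, the equation $\Psi(X)=0$ decouples into scalar conditions:
\begin{itemize}
\item the $(1,1)$ and $(2,2)$ output entries both give $p(x_{11}+x_{22})=0$, hence $x_{22}=-x_{11}$ because $p>0$;
\item the $(3,3)$ entry gives $qx_{33}=0$, so $x_{33}=0$;
\item the $(1,3)$ and $(3,1)$ entries give $rx_{13}=rx_{31}=0$, so $x_{13}=x_{31}=0$ since $r>0$;
\item the $(2,3)$ and $(3,2)$ entries give $tx_{32}=tx_{23}=0$, so $x_{23}=x_{32}=0$ since $t>0$.
\end{itemize}
The entries $x_{12},x_{21}$ do not appear in the output at all, and the outputs at positions $(1,2)$ and $(2,1)$ are identically zero, so these two entries are free. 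Hence $\ker\Psi$ consists exactly of the matrices
\[
\begin{pmatrix}z&x&0\\y&-z&0\\0&0&0\end{pmatrix},
\]
with $z=x_{11}$, $x=x_{12}$, $y=x_{21}$. This space is three dimensional and equals $K_0$. The inclusion $K_0\subset\ker\Psi$ also follows by direct substitution.

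For the scheme statement, restrict the $2\times2$ minors of $M_3$ to $\mathbb P(K_0)$ with coordinates $[x:y:z]$. Every minor that involves row $3$ or column $3$ vanishes identically on $K_0$. The only remaining minor is the upper left one, $-z^2-xy$. Thus $I_{\min}(K_0)=(z^2+xy)$. This ideal is principal, hence saturated, and $z^2+xy$ is a nondegenerate quadratic form (its Hessian has rank three). So $Z_{K_0}$ is the smooth conic $z^2+xy=0$, and it spans $\mathbb P(K_0)$.

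Alternatively, one can observe that $K_0$ is exactly the traceless hyperplane of the compression $U\otimes V$ with $U=V=\Span\{e_1,e_2\}$. This is the normal form of Theorem~\ref{thm:smooth-conic}, and the scheme equality $\mathbb P(K)\cap\Sigma=\mathbb P(K)\cap(\mathbb P(U)\times\mathbb P(V))$ is then immediate.

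No step is a genuine obstacle. The only point needing care is making sure that $x_{12}$ and $x_{21}$ really never appear in $\Psi$. This is visible from \eqref{eq:Psi}: the input entries used there are only $x_{11},x_{22},x_{13},x_{31},x_{23},x_{32},x_{33}$.
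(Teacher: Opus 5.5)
Your proof is correct and follows essentially the same route as the paper's: you solve $\Psi(X)=0$ entrywise to get $x_{13}=x_{31}=x_{23}=x_{32}=x_{33}=0$ and $x_{22}=-x_{11}$ with $x_{12},x_{21}$ free, then observe that $-z^2-xy$, the minor of the $2\times2$ corner, is the only restricted minor that does not vanish identically. Your remarks that the principal ideal $(z^2+xy)$ is saturated and that the quadratic form has rank three make explicit what the paper leaves implicit.
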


\begin{proof}
From $\Psi(X)=0$ one obtains
\[
x_{13}=x_{31}=x_{23}=x_{32}=x_{33}=0,
\qquad x_{11}+x_{22}=0,
\]
while $x_{12},x_{21}$ are free. The determinant of the nonzero $2\times2$ corner is $-z^2-xy$.
\end{proof}

{
The restriction $r,t>0$ is essential for the matrix net conclusion. For $p,q>0$ and $r,t\ge0$, the same kernel equations give
\[
\dim\ker\Psi_{p,q;r,t}
=3+2\mathbf 1_{\{r=0\}}+2\mathbf 1_{\{t=0\}}.
\]
Indeed, setting $r=0$ frees the entries $x_{13},x_{31}$, and setting $t=0$ frees $x_{23},x_{32}$. Thus the kernel has dimension five on either coordinate axis away from the origin, and dimension seven at the origin; these boundary kernels are not matrix nets.
}

\chg{An output congruence removes the dimensional scales $p$ and $q$.}

\begin{proposition}\label{prop:normalization}
Let
\[
R=\frac r{\sqrt{pq}},\qquad T=\frac t{\sqrt{pq}},\qquad
D=\operatorname{diag}(p^{-1/2},p^{-1/2},q^{-1/2}).
\]
Then
\[
\Ad_D\circ\Psi_{p,q;r,t}=\Psi_{1,1;R,T},
\qquad
\Ad_D(Y)=DYD.
\]
Consequently positivity, $k$-positivity, complete positivity, $k$-copositivity, complete copositivity, decomposability, and atomicity are preserved and reflected by this normalization. The kernel is unchanged. Thus, modulo invertible positive output congruence, the \chg{four parameter} family reduces to the two dimensionless parameters $(R,T)$.
\end{proposition}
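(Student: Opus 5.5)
The identity itself is a direct entrywise computation, so I would begin there. Put $d=p^{-1/2}$ and $e=q^{-1/2}$, so that $D=\operatorname{diag}(d,d,e)$. Conjugation gives $(DYD)_{ij}=D_{ii}D_{jj}Y_{ij}$, and each entry of $\Psi_{p,q;r,t}(X)$ in \eqref{eq:Psi} is simply rescaled. The two diagonal entries $p(x_{11}+x_{22})$ are multiplied by $d^2=p^{-1}$ and become $x_{11}+x_{22}$. The entry $qx_{33}$ is multiplied by $e^2=q^{-1}$ and becomes $x_{33}$. The entries $rx_{13}$ and $rx_{31}$ are multiplied by $de=(pq)^{-1/2}$ and become $Rx_{13}$ and $Rx_{31}$; in the same way $tx_{32}$ and $tx_{23}$ become $Tx_{32}$ and $Tx_{23}$. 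The zero entries stay zero. This is precisely $\Psi_{1,1;R,T}(X)$, which proves $\Ad_D\circ\Psi_{p,q;r,t}=\Psi_{1,1;R,T}$.

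For the preservation statements I would use only that $D$ is invertible and positive diagonal, and that $\Ad_D$ and $\Ad_{D^{-1}}$ are mutually inverse completely positive maps. Consider first $k$-positivity. Since $\operatorname{id}_k\otimes\Ad_D=\Ad_{I_k\otimes D}$ preserves the positive cone of $M_k\otimes M_3$, $k$-positivity of $\Psi_{p,q;r,t}$ implies $k$-positivity of $\Psi_{1,1;R,T}$. Applying $\Ad_{D^{-1}}$ gives the converse, so the property is both preserved and reflected. Positivity and complete positivity are the cases $k=1$ and $k$ arbitrary.

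For $k$-copositivity, $\Psi\circ\mathsf T$ is replaced by $\Ad_D\circ\Psi\circ\mathsf T$, and the same argument applies. For decomposability, if $\Psi_{p,q;r,t}=\Phi_1+\Phi_2\circ\mathsf T$ with $\Phi_1,\Phi_2$ completely positive, then composing with $\Ad_D$ yields a decomposition of the same form, and conversely via $\Ad_{D^{-1}}$. Atomicity means the map is not a sum of a $2$-positive and a $2$-copositive map. Any such splitting transports under $\Ad_{D^{\pm1}}$ because $2$-positivity and $2$-copositivity are preserved, so atomicity is preserved and reflected as well.

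The kernel is unchanged because $\ker(\Ad_D\circ\Psi)=\ker\Psi$ when $\Ad_D$ is invertible. Combined with Proposition~\ref{prop:kernel}, this shows the kernel is $K_0$ whenever $r,t>0$. The final reduction to the two parameters $(R,T)$ follows at once. I do not expect a real obstacle: the only point needing care is to record that composing with an invertible completely positive map whose inverse is also completely positive keeps each cone invariant. That is what makes each property reflected and not merely preserved.
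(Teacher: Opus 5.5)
Your proposal is correct and follows essentially the same route as the paper: the entrywise rescaling $(DYD)_{ij}=D_{ii}D_{jj}Y_{ij}$ gives the identity. The remaining claims follow from three facts you also use: $\Ad_D$ and $\Ad_{D^{-1}}$ are mutually inverse completely positive congruences whose ampliations $\Ad_{I_k\otimes D^{\pm1}}$ preserve the relevant cones; this transfers to the copositive versions and to CP$+$coCP and $2$-positive$+2$-copositive splittings; and the kernel is unchanged because $\Ad_D$ is invertible.
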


\begin{proof}
A direct calculation gives
\[
D\Psi_{p,q;r,t}(X)D=
\begin{pmatrix}
x_{11}+x_{22}&0&R x_{13}\\
0&x_{11}+x_{22}&T x_{32}\\
R x_{31}&T x_{23}&x_{33}
\end{pmatrix}.
\]
Both $\Ad_D$ and $\Ad_{D^{-1}}$ are completely positive order isomorphisms. Their ampliations are again invertible congruences, so they preserve and reflect $k$-positivity and complete positivity; the same statement after partial transpose gives the copositive assertions. They also carry CP$+$coCP decompositions, and more generally sums of $2$-positive and $2$-copositive maps, bijectively to such decompositions. Hence decomposability and atomicity are preserved and reflected. Since $D$ is invertible, $\ker(\Ad_D\circ\Psi)=\ker\Psi$.
\end{proof}

{We record the normalized phase picture here. Positivity follows from Proposition~\ref{prop:normalization} and Theorem~\ref{thm:exact-positivity}; the decomposability, atomicity, and edge assertions follow from Theorem~\ref{thm:exact-decomposability} and Corollary~\ref{cor:2positive} below.}

\begin{corollary}\label{cor:normalized-phase}
In the normalized first quadrant $(R,T)$, positivity is exactly the unit square
\[
0\le R\le1,\qquad0\le T\le1,
\]
decomposability is exactly the quarter disk
\[
R^2+T^2\le1,
\]
and the complementary part of the positivity square is atomic. \chg{The edge $T=0$ consists exactly of the completely positive maps in this family, while the edge $R=0$ consists exactly of the completely copositive maps.}
\end{corollary}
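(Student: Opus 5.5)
Corollary~\ref{cor:normalized-phase} is essentially a translation of the four-parameter results into the normalized coordinates, so the plan is to reduce everything to $\Psi_{1,1;R,T}$ via Proposition~\ref{prop:normalization} and then read off each region. First, by Proposition~\ref{prop:normalization}, $\Psi_{p,q;r,t}$ and $\Psi_{1,1;R,T}$ are related by the invertible completely positive congruence $\Ad_D$, which preserves and reflects positivity, complete positivity, complete copositivity, decomposability and atomicity. So it suffices to treat $p=q=1$, where $pq=1$ and $(r,t)=(R,T)$ ranges over the first quadrant.

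Positivity: applying Theorem~\ref{thm:exact-positivity} with $p=q=1$ gives positivity if and only if $R^2\le1$ and $T^2\le1$. Since $R,T\ge0$, this is exactly the unit square $0\le R,T\le1$. Decomposability and atomicity: Theorem~\ref{thm:exact-decomposability} (stated below, and consistent with Theorem~\ref{thm:main-positive-map}) says that inside the positivity region decomposability is equivalent to $r^2+t^2\le pq$, i.e.\ $R^2+T^2\le1$, and that the maps with $r^2+t^2>pq$ are atomic. Intersecting the quarter disk with the square gives the quarter disk itself, because $R^2+T^2\le1$ already forces $R,T\le1$. The complement inside the square is therefore atomic.

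Edges: Corollary~\ref{cor:2positive} gives complete positivity $\Leftrightarrow t=0$ and complete copositivity $\Leftrightarrow r=0$. Under normalization these become $T=0$ and $R=0$ respectively; on these edges $R\le1$ or $T\le1$ is the positivity constraint, consistent with the square.

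The only real content lies in the cited results, especially the decomposability threshold and the atomicity witness of Theorem~\ref{thm:exact-decomposability}. For the corollary itself, the one point to check carefully is that Proposition~\ref{prop:normalization} transfers each property in both directions, which it does explicitly. I expect no obstacle beyond bookkeeping that $R^2+T^2\le1$ lies inside the square.
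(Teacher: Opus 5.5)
Your proposal is correct and follows the paper's route: reduce to $\Psi_{1,1;R,T}$ via Proposition~\ref{prop:normalization}, then take positivity from Theorem~\ref{thm:exact-positivity}, decomposability and atomicity from Theorem~\ref{thm:exact-decomposability}, and the edge statements from Corollary~\ref{cor:2positive}. One point is worth stating explicitly: completely positive and completely copositive maps are automatically positive, so the qualifier ``inside the positivity region'' in Corollary~\ref{cor:2positive} loses nothing, and the two edges are exactly $\{0\le R\le1,\ T=0\}$ and $\{R=0,\ 0\le T\le1\}$.
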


\subsection{Exact decomposability and atomicity phase diagram}

{
We use the conventions
\[
C_\Phi=\sum_{i,j}E_{ij}\otimes\Phi(E_{ij}),
\qquad
\Gamma=\operatorname{id}\otimes\mathsf T,
\]
where $\mathsf T$ is transposition in the standard basis. Thus $\Phi$ is $k$-copositive when $\mathsf T\circ\Phi$ is $k$-positive. A map is decomposable if it is a sum of a completely positive and a completely copositive map. A positive map is called atomic if it cannot be written as a sum of a $2$-positive and a $2$-copositive map.
}

Write $C_\Psi$ for the Choi matrix in the ordered basis
\[
(f_{11},f_{12},f_{13},f_{21},f_{22},f_{23},f_{31},f_{32},f_{33}),
\qquad f_{ij}=e_i\otimes e_j.
\]
Then
\[
C_\Psi=
\begin{pmatrix}
p&0&0&0&0&0&0&0&r\\
0&p&0&0&0&0&0&0&0\\
0&0&0&0&0&0&0&0&0\\
0&0&0&p&0&0&0&0&0\\
0&0&0&0&p&0&0&0&0\\
0&0&0&0&0&0&0&t&0\\
0&0&0&0&0&0&0&0&0\\
0&0&0&0&0&t&0&0&0\\
r&0&0&0&0&0&0&0&q
\end{pmatrix}.
\]

\chg{The decomposable cone meets the family along exactly one quadratic boundary.}

\begin{theorem}\label{thm:exact-decomposability}
Assume $\Psi_{p,q;r,t}$ is positive. Then
\[
\Psi_{p,q;r,t}\text{ is decomposable}
\quad\Longleftrightarrow\quad
r^2+t^2\le pq.
\]
If $r^2+t^2>pq$, then $\Psi_{p,q;r,t}$ is atomic.
\end{theorem}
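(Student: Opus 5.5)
We first reduce to the normalized parameters. By Proposition~\ref{prop:normalization}, $\Ad_D$ preserves and reflects decomposability and atomicity, so it suffices to treat $\Psi_{1,1;R,T}$ with $0\le R,T\le1$ (the positivity square of Theorem~\ref{thm:exact-positivity}). The claim then becomes: decomposable iff $R^2+T^2\le1$, and atomic otherwise. The argument has two halves. An explicit CP$+$coCP splitting of the Choi matrix gives the sufficiency direction. An explicit PPT witness state, of Schmidt number two on both sides, gives the atomicity direction, and atomicity implies nondecomposability.

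\emph{Sufficiency.} Read off the structure of $C_\Psi$ in the displayed basis. The diagonal carries weight $1$ at $f_{11},f_{12},f_{21},f_{22}$ and at $f_{33}$. The coupling $R$ between $f_{11}$ and $f_{33}$ is of completely positive type. The coupling $T$ between $f_{23}$ and $f_{32}$ is of copositive type: under $\Gamma$ one has $E_{23}\otimes E_{32}\mapsto E_{23}\otimes E_{23}$, which is the entry $(f_{22},f_{33})$. I will therefore write $C_\Psi=P+Q^\Gamma$ with
\[
P=\begin{pmatrix}1&R\\R&R^2\end{pmatrix}_{\{f_{11},f_{33}\}}+E_{f_{12}}+E_{f_{21}}+(1-R^2-T^2)E_{f_{33}},\qquad
Q=\begin{pmatrix}1&T\\T&T^2\end{pmatrix}_{\{f_{22},f_{33}\}}.
\]
Both $P$ and $Q$ are positive semidefinite exactly when $1-R^2-T^2\ge0$. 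Since $Q^\Gamma$ has the entry $T$ at $(f_{23},f_{32})$ and the same diagonal, this reproduces $C_\Psi$. The real content is the diagonal budget: the single weight at $f_{33}$ has to be shared between the two $2\times2$ blocks, and this sharing is precisely where the quarter-disk inequality comes from.

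\emph{Atomicity for $R^2+T^2>1$.} I will construct a state $\rho$ and verify four properties:
\begin{enumerate}
\item $\rho\ge0$ and $\rho^\Gamma\ge0$;
\item $\rho$ and $\rho^\Gamma$ are each a sum of positive operators of Schmidt rank at most two;
\item consequently $\Tr(C_\Phi\rho)\ge0$ for every $2$-positive $\Phi$ and every $2$-copositive $\Phi$;
\item $\Tr(C_\Psi\rho)<0$.
\end{enumerate}
For the ansatz, dualize the splitting above. Put diagonal weights $x$ on $f_{11}$, $y$ on $f_{22}$, $z$ on $f_{33}$, and equal weights $w$ on $f_{23},f_{32}$. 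Add negative off-diagonals $-c$ at $(f_{11},f_{33})$ and $-d$ at $(f_{23},f_{32})$, where the second one turns into an $(f_{22},f_{33})$ coupling after $\Gamma$. The supports $\{f_{11},f_{22},f_{33},f_{23},f_{32}\}$ on both sides give birank $(5,5)$. The positivity conditions are $c^2\le xz$ and $d\le w$ for $\rho$, and $c\le w$-type and $d^2\le yz$ for $\rho^\Gamma$. The pairing is
\[
\Tr(C_\Psi\rho)=x+y+z-2Rc-2Td.
\]
Choosing $(x,y,z)\propto(R^2,T^2,1)$ together with $c\propto R$ and $d\propto T$, the extremal choice, makes the pairing proportional to $1-R^2-T^2<0$. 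The remaining parameters are then tuned so that all four $2\times2$ positivity constraints hold.

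\emph{Main obstacle.} The hardest step is the Schmidt number bound in item (ii). Positivity of $\rho^\Gamma$ alone only excludes decomposability, whereas atomicity requires $\rho$ and $\rho^\Gamma$ to have Schmidt number at most two. My plan is to write $\rho$ explicitly as a sum of rank-one projectors onto vectors such as $\alpha f_{11}-\beta f_{33}$ and $\gamma f_{23}-\delta f_{32}$, each of Schmidt rank two, plus diagonal product terms, and to do the same for $\rho^\Gamma$ with the vector $f_{22}-\epsilon f_{33}$. If the off-diagonal couplings cannot be absorbed in this way, I will enlarge the support by coupling along the conic directions of $K_0$ and recompute. This adjustment may also be what is needed to make the state genuinely entangled rather than merely a witness.
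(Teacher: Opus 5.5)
Your sufficiency half is the paper's certificate in normalized coordinates: your $Q$ is $|v\rangle\langle v|$ with $v=f_{22}+Tf_{33}$, and your $P$ is the paper's $C_\Psi-Q^\Gamma$. Only $P$, not $Q$, carries the condition $1-R^2-T^2\ge0$.

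\textbf{Gap: the witness ansatz is not PPT.} The atomicity half follows the paper's strategy, but your state fails as written. Under $\Gamma$ the coupling $-c$ at $(f_{11},f_{33})$ moves to $(f_{13},f_{31})$, because $E_{13}\otimes E_{13}\mapsto E_{13}\otimes E_{31}$. Your $\rho$ is supported on $\{f_{11},f_{22},f_{33},f_{23},f_{32}\}$, so $\rho^\Gamma$ has zero diagonal at $f_{13},f_{31}$ and the entry $-c$ between them. Hence $\rho^\Gamma\succeq0$ forces $c=0$. With $c=0$ the pairing satisfies $x+y+z-2Td\ge y+z-2T\sqrt{yz}\ge0$, since $d^2\le yz$ and $T\le1$, so no witness is possible.

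Your ``$c\le w$-type'' condition has no place in the ansatz. The weight $w$ sits at $f_{23},f_{32}$, which is where $d$ lives, not $c$. The repair is to add diagonal mass $w'\ge c$ at both $f_{13}$ and $f_{31}$. This does not change the pairing, because $C_\Psi$ vanishes there. Taking $x=R^2$, $y=T^2$, $z=1$, $c=w'=R$, $d=w=T$ recovers exactly the paper's $\rho_{p,r,t}$ with $\alpha=R$, $\beta=T$, and gives $\Tr(C_\Psi\rho)=1-R^2-T^2$. For the same reason your birank count is off. The support has seven coordinates, and rank five comes from the $2\times2$ blocks being rank one at this extremal choice, not from a five-element support.

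\textbf{The Schmidt-number step is not an obstacle.} Once $\rho$ is corrected, both $\rho$ and $\rho^\Gamma$ are direct sums of $1\times1$ blocks and $2\times2$ blocks on pairs of product basis vectors. Every vector in such a span, for instance $\lambda f_{11}+\mu f_{33}$, has Schmidt rank at most two. So the regroupings into rank-one terms, as in the paper's two displayed decompositions, give $\SN\le2$ on both sides directly. Your fallback of coupling along $K_0$ is unnecessary. Entanglement of $\rho$ is also automatic, since its negative pairing with the block-positive $C_\Psi$ already certifies it. It is not needed for the theorem in any case: your steps (iii) and (iv) already give atomicity, and hence nondecomposability.
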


\begin{proof}
Assume first $r^2+t^2\le pq$. Put
\[
v=\sqrt p\,f_{22}+\frac t{\sqrt p}f_{33},
\qquad
Q=|v\rangle\langle v|\succeq0.
\]
Partial transpose on the second tensor factor converts the cross term in $Q$ into the $t$ block on $\Span\{f_{23},f_{32}\}$. Hence
\[
P:=C_\Psi-Q^\Gamma
\]
is the direct sum of the scalar $p$ blocks at $f_{12}$ and $f_{21}$, zero blocks, and the nontrivial block
\[
\begin{pmatrix}
p&r\\ r&q-t^2/p
\end{pmatrix}
\]
on $\Span\{f_{11},f_{33}\}$. {Here $p>0$ and $q-t^2/p\ge r^2/p\ge0$, while the determinant is $pq-r^2-t^2\ge0$. Hence $P\succeq0$, including the equality case $r^2+t^2=pq$.} Thus
\[
C_\Psi=P+Q^\Gamma
\]
is a completely positive plus completely copositive decomposition.

For the converse, let
\[
\alpha=\frac rp,\qquad \beta=\frac tp
\]
and define
\begin{align}
\rho_{p,r,t}
&=|\alpha f_{11}-f_{33}\rangle\langle\alpha f_{11}-f_{33}| \label{eq:rho}\\
&\quad+\beta|f_{23}-f_{32}\rangle\langle f_{23}-f_{32}|\nonumber\\
&\quad+\alpha\bigl(|f_{13}\rangle\langle f_{13}|+|f_{31}\rangle\langle f_{31}|\bigr)
+\beta^2|f_{22}\rangle\langle f_{22}|.\nonumber
\end{align}
Clearly $\rho_{p,r,t}\succeq0$. Its partial transpose can be regrouped as
\begin{align}
\rho_{p,r,t}^\Gamma
&=\alpha|f_{13}-f_{31}\rangle\langle f_{13}-f_{31}| \label{eq:rhoGamma}\\
&\quad+|\beta f_{22}-f_{33}\rangle\langle\beta f_{22}-f_{33}|\nonumber\\
&\quad+\alpha^2|f_{11}\rangle\langle f_{11}|
+\beta\bigl(|f_{23}\rangle\langle f_{23}|+|f_{32}\rangle\langle f_{32}|\bigr),\nonumber
\end{align}
so $\rho_{p,r,t}$ is PPT.\@ Direct evaluation gives
\begin{equation}\label{eq:pairing}
\Tr(C_\Psi\rho_{p,r,t})
=q-\frac{r^2+t^2}{p}
=\frac{pq-r^2-t^2}{p}.
\end{equation}
Therefore $r^2+t^2>pq$ is incompatible with decomposability.

More is true. Every vector in the decomposition \eqref{eq:rho} has Schmidt rank at most two, so $\rho_{p,r,t}$ has Schmidt number at most two. The decomposition \eqref{eq:rhoGamma} shows the same for $\rho_{p,r,t}^\Gamma$. \newrev{Suppose that $\Psi=\Phi_1+\Phi_2$, where $\Phi_1$ is $2$-positive and $\Phi_2$ is $2$-copositive. Then $C_{\Phi_1}$ and $C_{\Phi_2}^\Gamma$ are $2$-block-positive. Since $\SN(\rho_{p,r,t})\le2$ and $\SN(\rho_{p,r,t}^\Gamma)\le2$, the Choi pairing satisfies}
\[
\newrev{\Tr(C_\Psi\rho_{p,r,t})=\Tr(C_{\Phi_1}\rho_{p,r,t})+\Tr(C_{\Phi_2}^\Gamma\rho_{p,r,t}^\Gamma)\ge0.}
\]
\newrev{This contradicts \eqref{eq:pairing} when $r^2+t^2>pq$.} Hence the map is atomic.
\end{proof}

{
At $r^2+t^2=pq$, the pairing in \eqref{eq:pairing} is zero, whereas decomposability follows from the explicit decomposition $C_\Psi=P+Q^\Gamma$ above. Vanishing of this single pairing does not imply separability of $\rho_{p,r,t}$.
}

\chg{The separating operator in the proof is itself an entangled state with controlled ranks and Schmidt numbers.}

\begin{corollary}\label{cor:adapted-PPT}
Assume $\Psi_{p,q;r,t}$ is positive and $r^2+t^2>pq$. Then $r,t>0$, and the normalized operator
\[
\widehat\rho_{p,r,t}=\frac{\rho_{p,r,t}}{\Tr\rho_{p,r,t}}
\]
is a PPT entangled state satisfying
\[
\rank\widehat\rho_{p,r,t}=\rank\widehat\rho_{p,r,t}^\Gamma=5,
\qquad
\SN(\widehat\rho_{p,r,t})=\SN(\widehat\rho_{p,r,t}^\Gamma)=2.
\]
In particular the atomic region carries an explicit family of PPT entangled states of birank $(5,5)$ and \chg{Schmidt numbers $(2,2)$ for the state and its partial transpose}. If $p,r,t$ are rational, then $\widehat\rho_{p,r,t}$ has rational matrix entries.
\end{corollary}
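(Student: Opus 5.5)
From Theorem~\ref{thm:exact-decomposability} we already have that $\rho_{p,r,t}$ is positive semidefinite and PPT, that it has Schmidt number at most two on both sides, and that $\Tr(C_\Psi\rho_{p,r,t})=(pq-r^2-t^2)/p<0$. The plan is to read off the remaining claims directly from the decompositions \eqref{eq:rho} and \eqref{eq:rhoGamma}.

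\emph{Step 1: the parameters are positive.} Positivity gives $r^2\le pq$ and $t^2\le pq$ by Theorem~\ref{thm:exact-positivity}. If $r=0$, then $r^2+t^2=t^2\le pq$, which contradicts $r^2+t^2>pq$. The same argument rules out $t=0$. Hence $r,t>0$, so $\alpha,\beta>0$.

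\emph{Step 2: PPT entanglement.} $\Tr\rho_{p,r,t}=\alpha^2+1+2\beta+2\alpha+\beta^2>0$, so normalization is legitimate. The normalized operator is PPT by \eqref{eq:rhoGamma}. Suppose it were separable. Every product vector $x\otimes y$ satisfies $\Tr(C_\Psi\,xx^*\otimes yy^*)=y^{\mathsf T}$-type evaluation of $\Psi(\bar x\bar x^{*})$, which is nonnegative because $\Psi$ is positive. Taking convex combinations, a separable $\rho$ would give $\Tr(C_\Psi\rho)\ge0$, contradicting \eqref{eq:pairing}. Equivalently, $C_\Psi$ is a block-positive witness.

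\emph{Step 3: ranks.} All $\alpha,\beta>0$, so the five vectors in \eqref{eq:rho} carry nonzero coefficients. These vectors are $\alpha f_{11}-f_{33}$, $f_{23}-f_{32}$, $f_{13}$, $f_{31}$ and $f_{22}$. They have pairwise disjoint supports in the standard basis, so they are linearly independent and $\rank\rho=5$. In the same way, the vectors $f_{13}-f_{31}$, $\beta f_{22}-f_{33}$, $f_{11}$, $f_{23}$ and $f_{32}$ in \eqref{eq:rhoGamma} have disjoint supports, so $\rank\rho^\Gamma=5$.

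\emph{Step 4: Schmidt numbers.} The upper bound $\SN\le2$ for both $\rho$ and $\rho^\Gamma$ comes from the theorem. For the lower bound, $\SN=1$ means separable. The state is entangled by Step~2. The partial transpose of a separable state is separable, so $\rho^\Gamma$ is entangled as well. Both Schmidt numbers are therefore exactly $2$.

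\emph{Step 5: rationality.} The entries of $\rho$ are polynomials in $\alpha=r/p$ and $\beta=t/p$ with integer coefficients, and the same holds for the trace. Hence, if $p,r,t$ are rational, every entry of $\widehat\rho_{p,r,t}$ is rational.

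The only step that needs care is the pairing inequality in Step~2. It requires checking that the Choi-matrix convention $C_\Psi=\sum E_{ij}\otimes\Psi(E_{ij})$ makes $\Tr(C_\Psi(xx^*\otimes yy^*))$ equal to a quadratic form of $\Psi$ applied to a rank one positive matrix, up to complex conjugation. This is standard, and complex conjugation preserves positivity.
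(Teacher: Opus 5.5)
Your proposal is correct and follows the paper's proof essentially step by step. Block positivity of $C_\Psi$ together with the negative pairing \eqref{eq:pairing} gives entanglement, and the disjoint supports in \eqref{eq:rho} and \eqref{eq:rhoGamma} give rank five. The Schmidt rank two decompositions plus entanglement give $\SN=2$ on both sides, and rationality of $\alpha,\beta$ gives rational entries.

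You also make explicit two points the paper leaves implicit: the derivation of $r,t>0$ from $r^2,t^2\le pq$, and the fact that the rank count needs $\alpha,\beta>0$. One correction: the product-vector pairing is exactly $y^*\Psi(\bar x\bar x^*)y$, not a ``$y^{\mathsf T}$-type'' evaluation.
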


\begin{proof}
Equations~\eqref{eq:rho}--\eqref{eq:rhoGamma} show that $\rho_{p,r,t}$ and its partial transpose are positive. Equation~\eqref{eq:pairing} is negative in the atomic region, and $C_\Psi$ is block positive because $\Psi$ is positive. Hence $\rho_{p,r,t}$ is entangled. Separability is preserved by partial transpose, so $\rho_{p,r,t}^\Gamma$ is entangled as well.

The five nonzero vectors appearing in \eqref{eq:rho} have mutually independent supports, and the same is true for the five vectors in \eqref{eq:rhoGamma}; therefore both operators have rank five. Those decompositions use vectors of Schmidt rank at most two, so both Schmidt numbers are at most two. Entanglement makes them at least two, hence they are exactly two. Finally, for rational $p,r,t$ the coefficients $\alpha=r/p$ and $\beta=t/p$ are rational, and normalization divides by a positive rational trace.
\end{proof}

\chg{Higher positivity collapses to complete positivity on the coordinate edges of this family.}

\begin{corollary}\label{cor:2positive}
Inside the positivity region,
\[
\Psi_{p,q;r,t}\text{ is $2$-positive}
\Longleftrightarrow
\Psi_{p,q;r,t}\text{ is completely positive}
\Longleftrightarrow t=0,
\]
and
\[
\Psi_{p,q;r,t}\text{ is $2$-copositive}
\Longleftrightarrow
\Psi_{p,q;r,t}\text{ is completely copositive}
\Longleftrightarrow r=0.
\]
\end{corollary}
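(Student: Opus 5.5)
The plan is to handle the two chains of equivalences separately, using the Choi matrix $C_\Psi$ displayed above and the partial transpose $\Gamma$. The two chains are related by the symmetry $r\leftrightarrow t$ combined with transposition, so most of the work is in the first chain.

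\emph{The direction $t=0\Rightarrow$ complete positivity.} When $t=0$, the matrix $C_\Psi$ is a direct sum of four blocks. These are the scalar $p$ blocks at $f_{12},f_{21},f_{22}$, zero blocks, and the block $\begin{pmatrix}p&r\\ r&q\end{pmatrix}$ on $\Span\{f_{11},f_{33}\}$. Inside the positivity region we have $r^2\le pq$ by Theorem~\ref{thm:exact-positivity}, so this last block is positive semidefinite. Hence $C_\Psi\succeq0$ and $\Psi$ is completely positive. Trivially, complete positivity implies $2$-positivity.

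\emph{The direction $2$-positivity $\Rightarrow t=0$.} Here I will exhibit a vector of Schmidt rank two that detects the $t$ block. The block of $C_\Psi$ on $\Span\{f_{23},f_{32}\}$ is $\begin{pmatrix}0&t\\ t&0\end{pmatrix}$. Take $\xi=f_{23}-f_{32}$, whose Schmidt rank is two. Then $\langle\xi,C_\Psi\xi\rangle=-2t$. By $2$-positivity, $C_\Psi$ is $2$-block-positive, that is, $\langle\xi,C_\Psi\xi\rangle\ge0$ for every $\xi$ of Schmidt rank at most two. This forces $t\le0$, and since $t\ge0$ we get $t=0$.

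As a cross-check on conventions, one can instead apply $\operatorname{id}_2\otimes\Psi$ to $\sum_{i,j\in\{2,3\}}E_{ij}\otimes E_{ij}$ restricted to the $\{2,3\}$ indices. This yields a $4\times4$ matrix with a zero diagonal entry paired with the off-diagonal entry $t$, so positivity again forces $t=0$. This is the main point to verify carefully: I must confirm that the $(f_{23},f_{32})$ entries of $C_\Psi$ come from $\Psi(E_{23})$ and $\Psi(E_{32})$ with zero diagonal partners $f_{23},f_{32}$. The displayed $C_\Psi$ confirms this, since rows $6$ and $8$ have zero diagonal.

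\emph{The copositive chain.} I will work with $C_\Psi^\Gamma$, which swaps the roles of the off-diagonal entries. The $r$ entry, which sits at $(f_{11},f_{33})$, moves to $(f_{13},f_{31})$, where both diagonal entries are zero. The $t$ entry, at $(f_{23},f_{32})$, moves to $(f_{22},f_{33})$, forming the block $\begin{pmatrix}p&t\\ t&q\end{pmatrix}$.

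\emph{The direction $r=0\Rightarrow$ complete copositivity.} When $r=0$, the matrix $C_\Psi^\Gamma$ is a direct sum of nonnegative scalars and the block $\begin{pmatrix}p&t\\ t&q\end{pmatrix}$. This block is positive semidefinite because $t^2\le pq$, so $\Psi$ is completely copositive.

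\emph{The direction $2$-copositivity $\Rightarrow r=0$.} Take $\xi=f_{13}-f_{31}$. Then $\langle\xi,C_\Psi^\Gamma\xi\rangle=-2r$, and $2$-copositivity means $C_\Psi^\Gamma$ is $2$-block-positive, so $r=0$.

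I expect no real obstacle beyond bookkeeping of the partial transpose conventions.
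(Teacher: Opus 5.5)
Your proof is correct and matches the paper's argument: when $t=0$ the Choi matrix $C_\Psi$ is positive semidefinite because $r^2\le pq$, and when $t>0$ the Schmidt rank two vector $f_{23}-f_{32}$ gives $-2t<0$; the copositive chain is handled the same way through $C_\Psi^\Gamma$ and $f_{13}-f_{31}$. Your explicit bookkeeping of where the $r$ and $t$ entries land under $\Gamma$ only spells out what the paper compresses into ``by partial transpose.''
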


\begin{proof}
If $t=0$, positivity gives $r^2\le pq$, and the displayed Choi matrix is positive semidefinite, so the map is completely positive. If $t>0$, the \chg{Schmidt rank two} vector $f_{23}-f_{32}$ satisfies
\[
\langle f_{23}-f_{32},C_\Psi(f_{23}-f_{32})\rangle=-2t<0,
\]
so the map is not $2$-positive. The copositive assertion follows by partial transpose, using $f_{13}-f_{31}$ and the parameter $r$.
\end{proof}

The general merging construction of \cite{MarciniakRutkowski2017} already supplies broad structural criteria for complete positivity, $2$-positivity, decomposability, and nondecomposability. Thus the point of Corollary~\ref{cor:2positive} is not a new universal collapse phenomenon; for the scalar slice \eqref{eq:Psi}, the contribution is the transparent direct proof together with the exact matching decomposability boundary in Theorem~\ref{thm:exact-decomposability}.

There are several useful recent points of comparison. Ho, Le, Le, and Osaka characterize decomposable Choi polynomials through the Gram cone $C_\Phi=Q+R^\Gamma$ with $Q,R\succeq0$ \cite{HoLeLeOsaka2026}; the first half of the proof of Theorem~\ref{thm:exact-decomposability} is an explicit \chg{single block Gram cone} certificate of precisely this form, while \eqref{eq:sos-positivity} is the complementary \chg{block positivity} certificate. \chg{Poderini et al.\ recently exhibited other sparse qutrit families with exact positivity and nondecomposability thresholds and PPT entanglement transitions \cite{PoderiniEtAl2026}.} \newrev{Sacchi subsequently obtained an anisotropic bistochastic two parameter family with an exact square positivity region, a circular decomposability boundary, and explicit PPT entangled states, including rank type $(5,5)$ at a distinguished endpoint \cite{Sacchi2026}. These comparisons make it important to separate phase diagram novelty from geometric novelty. In the present paper the map family is used as a realization of a previously classified three dimensional domain kernel stratum, namely the smooth conic; the exact boundary is therefore attached to the projective determinantal geometry rather than derived from sparsity alone.}

\subsection{A fixed integral PPT certificate}

The adapted witness gives the exact boundary, but a single PPT state \chg{independent of the parameters} remains useful as an arithmetic certificate. Let
\begin{align}
\rho&=|f_{11}-f_{33}\rangle\langle f_{11}-f_{33}|
+|f_{23}-f_{32}\rangle\langle f_{23}-f_{32}|\label{eq:rho-fixed}\\
&\quad+|f_{13}\rangle\langle f_{13}|+|f_{22}\rangle\langle f_{22}|+|f_{31}\rangle\langle f_{31}|.\nonumber
\end{align}
Then
\[
\begin{aligned}
\rho^\Gamma
&=|f_{13}-f_{31}\rangle\langle f_{13}-f_{31}|
+|f_{22}-f_{33}\rangle\langle f_{22}-f_{33}|\\
&\quad+|f_{11}\rangle\langle f_{11}|+|f_{23}\rangle\langle f_{23}|+|f_{32}\rangle\langle f_{32}|.
\end{aligned}
\]
so $\rho$ is PPT and $\widehat\rho=\rho/7$ is a rational PPT state.

\begin{proposition}\label{prop:fixed-certificate}
For all parameters,
\[
\Tr(C_\Psi\rho)=2p+q-2(r+t).
\]
Consequently, within the positivity region,
\[
2(r+t)>2p+q
\]
is a \chg{sufficient atomicity certificate furnished by the fixed PPT state $\rho$}, and
\[
\Tr(C_\Psi\widehat\rho)=\frac{2p+q-2(r+t)}7.
\]
\end{proposition}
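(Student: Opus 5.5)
The identity is a direct evaluation of the Hilbert--Schmidt pairing of the explicit Choi matrix $C_\Psi$ displayed before Theorem~\ref{thm:exact-decomposability} against the five rank one terms of \eqref{eq:rho-fixed}. Atomicity then follows by repeating the Schmidt number argument from the proof of Theorem~\ref{thm:exact-decomposability}, with the adapted state $\rho_{p,r,t}$ replaced by the fixed state $\rho$.

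\emph{Step 1: the pairing.} For a vector $w$ write $\Tr(C_\Psi|w\rangle\langle w|)=\langle w,C_\Psi w\rangle$, and read off the relevant entries of $C_\Psi$ in the ordered basis $(f_{11},\dots,f_{33})$. The five contributions are:
\begin{itemize}
\item $f_{11}-f_{33}$ gives $(C_\Psi)_{11}+(C_\Psi)_{99}-2(C_\Psi)_{19}=p+q-2r$;
\item $f_{23}-f_{32}$ gives $(C_\Psi)_{66}+(C_\Psi)_{88}-2(C_\Psi)_{68}=-2t$, since both diagonal entries vanish;
\item $f_{13}$, $f_{22}$ and $f_{31}$ give the diagonal entries $0$, $p$ and $0$.
\end{itemize}
Summing yields $2p+q-2(r+t)$. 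This holds for all parameters, because only the displayed matrix entries are used.

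\emph{Step 2: normalization.} The trace of $\rho$ is $2+2+1+1+1=7$. Hence $\widehat\rho=\rho/7$ is a state with rational entries, and dividing the identity by $7$ gives the stated formula for $\Tr(C_\Psi\widehat\rho)$. The PPT property is already recorded by the displayed decomposition of $\rho^\Gamma$ into positive rank one terms.

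\emph{Step 3: atomicity certificate.} Suppose $\Psi$ is positive and $2(r+t)>2p+q$. Every vector in \eqref{eq:rho-fixed} and in the decomposition of $\rho^\Gamma$ has Schmidt rank at most two, so $\SN(\rho)\le2$ and $\SN(\rho^\Gamma)\le2$. Now assume $\Psi=\Phi_1+\Phi_2$ with $\Phi_1$ $2$-positive and $\Phi_2$ $2$-copositive. Then $C_{\Phi_1}$ and $C_{\Phi_2}^\Gamma$ are $2$-block-positive, and so
\[
\Tr(C_\Psi\rho)=\Tr(C_{\Phi_1}\rho)+\Tr(C_{\Phi_2}^\Gamma\rho^\Gamma)\ge0.
\]
This contradicts Step 1. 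Hence $\Psi$ is atomic, and in particular nondecomposable.

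The only point needing care is sign and index bookkeeping in Step 1: one must check that the off-diagonal entries $r$ at positions $(1,9),(9,1)$ and $t$ at $(6,8),(8,6)$ enter with coefficient $-1$ each, giving $-2r$ and $-2t$. Once that is checked, the rest is routine.
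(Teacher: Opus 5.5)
Your proposal is correct and follows the paper's proof. The paper evaluates $C_\Psi$ on the same five rank one summands, obtaining $p+q-2r$, $-2t$, $0$, $p$, $0$, and gets atomicity directly from the Schmidt rank two decompositions of $\rho$ and $\rho^\Gamma$ (it also notes that Theorem~\ref{thm:exact-decomposability} gives this conclusion); you simply write out the $2$-block-positivity pairing that the paper leaves implicit.
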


\begin{proof}
Evaluating $C_\Psi$ on the five \chg{rank one} summands in \eqref{eq:rho-fixed} gives respectively
\[
p+q-2r,\qquad -2t,\qquad0,\qquad p,\qquad0.
\]
The sum is the asserted affine expression. The atomic conclusion follows either from Theorem~\ref{thm:exact-decomposability} or directly from the \chg{Schmidt rank two} decompositions of $\rho$ and $\rho^\Gamma$.
\end{proof}

At the rational point
\[
(p,q,r,t)=\left(1,1,\frac45,\frac45\right),
\]
one has $r^2=t^2=16/25<1$ and
\[
\Tr(C_\Psi\widehat\rho)=-\frac1{35}.
\]
The Hermitian certificate \eqref{eq:sos-positivity} is defined over $\mathbb Q(i)$ using rational square coefficients because
\[
1-(4/5)^2=(3/5)^2.
\]

\subsection{Separation from the Miller and Olkiewicz boundary orbit}

\begin{proposition}\label{prop:rank-separation}
Assume $p,q,r,t>0$ and $r^2<pq$, $t^2<pq$. Then
\[
\rank C_{\Psi_{p,q;r,t}}=7,
\qquad
\rank C_{\Psi_{p,q;r,t}}^\Gamma=7.
\]
At $(p,q,r,t)=(1,1,1,1)$ both ranks equal six.
\end{proposition}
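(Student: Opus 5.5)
The Choi matrix $C_\Psi$ displayed before Theorem~\ref{thm:exact-decomposability} is block diagonal after reordering the basis, so both rank claims reduce to reading off block sizes and determinants.

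\textbf{Rank of $C_\Psi$.} The nonzero structure of $C_\Psi$ consists of:
\begin{itemize}
\item four scalar blocks $p$ at $f_{11}$'s companions $f_{12}$, $f_{21}$, $f_{22}$; more precisely, the diagonal entries $p$ sit at positions $f_{12},f_{21},f_{22}$, while $f_{11}$ is coupled to $f_{33}$;
\item the $2\times2$ block $\begin{pmatrix}p&r\\ r&q\end{pmatrix}$ on $\Span\{f_{11},f_{33}\}$;
\item the off-diagonal block $\begin{pmatrix}0&t\\ t&0\end{pmatrix}$ on $\Span\{f_{23},f_{32}\}$;
\item zeros at $f_{13}$ and $f_{31}$.
\end{itemize}
The rank is therefore $3+\rank\begin{pmatrix}p&r\\ r&q\end{pmatrix}+\rank\begin{pmatrix}0&t\\ t&0\end{pmatrix}$. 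For $t>0$ the last term is $2$. The middle block has determinant $pq-r^2$, which is positive when $r^2<pq$, so its rank is $2$. This gives $3+2+2=7$.

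\textbf{Rank of $C_\Psi^\Gamma$.} Partial transpose on the second factor sends $E_{ij}\otimes E_{kl}\mapsto E_{ij}\otimes E_{lk}$. Tracking the two coupling entries:
\begin{itemize}
\item the entry $r$ between $f_{11}$ and $f_{33}$ moves to a coupling between $f_{13}$ and $f_{31}$, giving the block $\begin{pmatrix}0&r\\ r&0\end{pmatrix}$ there;
\item the $t$ entries between $f_{23}$ and $f_{32}$ move to a coupling between $f_{22}$ and $f_{33}$, giving the block $\begin{pmatrix}p&t\\ t&q\end{pmatrix}$ on $\Span\{f_{22},f_{33}\}$;
\item the diagonal entries are unchanged, leaving scalar blocks $p$ at $f_{11},f_{12},f_{21}$, with $f_{23},f_{32}$ now zero.
\end{itemize}
The rank is $3+2+2=7$, using $r>0$ and $t^2<pq$. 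This is consistent with the transposition symmetry $r\leftrightarrow t$ already visible in Corollary~\ref{cor:2positive}.

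\textbf{The boundary point.} At $(1,1,1,1)$ the $r$-block $\begin{pmatrix}1&1\\1&1\end{pmatrix}$ of $C_\Psi$ has rank one, so $\rank C_\Psi=6$. Likewise the $t$-block of $C_\Psi^\Gamma$ is singular, so $\rank C_\Psi^\Gamma=6$.

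The main obstacle is bookkeeping: correctly identifying where the partial transpose relocates each off-diagonal entry, and confirming that the resulting blocks occupy disjoint basis vectors so the ranks add. I would verify the relocation directly from $C_\Psi=\sum E_{ij}\otimes\Psi(E_{ij})$, for example by checking that the $r x_{13}$ entry of $\Psi(E_{13})$ contributes $r\,E_{13}\otimes E_{13}$, which $\Gamma$ sends to $E_{13}\otimes E_{31}$, that is, the coupling $f_{11}\to f_{33}$ becomes $f_{13}\to f_{31}$ as claimed.
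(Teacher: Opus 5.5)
Your proof is correct and follows the paper's argument: you read off the block decomposition of $C_\Psi$ (three scalar $p$ blocks, the block on $\{f_{11},f_{33}\}$, the off-diagonal $t$ block, two zero coordinates), and then observe that partial transpose interchanges the roles of $r$ and $t$, which you verify explicitly entry by entry. The only blemish is the phrase ``four scalar blocks'' in your first bullet, which should read ``three'', as your own count $3+2+2$ correctly uses.
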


\begin{proof}
The Choi matrix has three scalar $p$ blocks, one block
\[
\begin{pmatrix}p&r\\r&q\end{pmatrix},
\]
one \chg{off diagonal} block
\[
\begin{pmatrix}0&t\\t&0\end{pmatrix},
\]
and two zero coordinates, giving rank seven in the strict region and rank six at the boundary. After partial transpose the roles of $r$ and $t$ in the two nontrivial block types are interchanged, giving the same conclusion.
\end{proof}

The unordered pair $\{\rank C_\Phi,\rank C_\Phi^\Gamma\}$ is preserved by invertible local congruences and merely interchanged by input or output transposition. Hence strict points are not in the \chg{orbit obtained from the Miller and Olkiewicz boundary representative by local filtering and transposition}.

\section{Consequences for Choi constructions and related questions}\label{sec:consequences}

The geometric theory above is independent of positive map terminology. Generalized Choi maps in the constructions of Cho, Kye, and Lee and of Chru\'sci\'nski, Marciniak, and Rutkowski have diagonal output depending on the diagonal of $X$ and act nondegenerately on \chg{off diagonal} matrix units \cite{ChoKyeLee1992,ChruscinskiMarciniakRutkowski2018}. The later doubly stochastic family of Bera, Scala, Sarbicki, and Chru\'sci\'nski has the same relevant architecture \cite{Bera2025}; recent optimality results for generalized Choi maps appear in \cite{ScalaEtAl2024}. \chg{Osaka's extremal family \cite{Osaka1992} belongs to the same historical line of positive map constructions, but no application of Theorem~\ref{thm:diagonal-obstruction} to that family is needed here.} Hence Theorem~\ref{thm:diagonal-obstruction} gives a \chg{parameter free} separation for the \chg{diagonal type} families just described.

\begin{corollary}\label{cor:choi-separation}
\chg{Every $\Psi_{p,q;r,t}$ with {$p,q,r,t>0$} lies outside the rank one determinantal equivalence class of every map of diagonal type with nondegenerate off diagonal part. In particular, it is not determinantally equivalent to the generalized Choi families cited above.}
\end{corollary}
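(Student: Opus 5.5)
The plan is to reduce the statement to Corollary~\ref{cor:diagonal-separation}, using Proposition~\ref{prop:kernel} for the geometric input and Theorem~\ref{thm:diagonal-obstruction} for the obstruction. Fix $p,q,r,t>0$. By Proposition~\ref{prop:kernel}, $\ker\Psi_{p,q;r,t}=K_0$. This is a three dimensional space sitting inside the upper left $2\times2$ compression. Its rank one kernel scheme $K_1(\Psi_{p,q;r,t})$ is the smooth conic $z^2+xy=0$ in $\mathbb P(K_0)\simeq\mathbb P^2$.

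The first thing to confirm is that this equality holds scheme-theoretically and that the conic spans the plane. The only nonzero restricted $2\times2$ minor is $-z^2-xy$, up to sign. The other minors involve the zero third row or column and vanish identically on $K_0$. Hence $I_{\min}(K_0)=(z^2+xy)$, which is already saturated. This is exactly the traceless normal form of Theorem~\ref{thm:smooth-conic}. An irreducible conic is not contained in a line, so it spans $\mathbb P(K_0)$.

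Next, suppose for contradiction that $\Psi_{p,q;r,t}$ is determinantally equivalent at rank one to a map $\Phi$ of diagonal type with nondegenerate off diagonal part. By Proposition~\ref{prop:det-equivalence}, $K_1(\Psi_{p,q;r,t})$ and $K_1(\Phi)$ are projectively isomorphic. Theorem~\ref{thm:diagonal-obstruction}, however, makes $K_1(\Phi)$ finite and reduced, while $K_1(\Psi_{p,q;r,t})$ has dimension one. Dimension is a projective invariant, so this is a contradiction. This is precisely Corollary~\ref{cor:diagonal-separation} applied with $\Psi=\Psi_{p,q;r,t}$.

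For the ``in particular'' clause, I will use the remark preceding the corollary: every map of the form $\Phi_A(X)=D_A(X)-X$ is of diagonal type with nondegenerate off diagonal part. Indeed, $D_A(X)-X$ preserves $D_n$, and on $O_n$ it acts as $-\operatorname{id}$, which is invertible. So I will check that each cited generalized Choi family (Cho--Kye--Lee, Chru\'sci\'nski--Marciniak--Rutkowski, Bera et al.) has this form. That amounts to a check of the cited formulas: diagonal output depending only on the diagonal input, and a nonzero scalar multiple of the identity on the off diagonal units.

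The one point needing care is that a family member might have off diagonal coefficient zero. Such a member would fail nondegeneracy, and I would exclude it explicitly. Determinantal equivalence in the broad sense implies rank one equivalence, so it too is ruled out. I expect no real obstacle here. The only delicate step is the scheme-level claim that the rank one kernel is exactly the conic, with no extra embedded or isolated points, and that is settled by the explicit single-generator ideal above.
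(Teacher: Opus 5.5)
Your proposal is correct and follows the paper's proof: Proposition~\ref{prop:kernel} supplies the smooth conic spanning $\mathbb P(\ker\Psi_{p,q;r,t})$. Corollary~\ref{cor:diagonal-separation}, which you re-derive inline from Proposition~\ref{prop:det-equivalence} and Theorem~\ref{thm:diagonal-obstruction}, gives the first assertion, and the diagonal type form of $D_A(X)-X$ gives the second. Your explicit check that $I_{\min}(K_0)=(z^2+xy)$ is already saturated is a useful detail the paper leaves implicit, while your concern about a vanishing off diagonal coefficient is moot, since that architecture always acts as $-\operatorname{id}$ on $O_n$.
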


\begin{proof}
\newrev{For $p,q,r,t>0$, Proposition~\ref{prop:kernel} identifies $K_1(\Psi_{p,q;r,t})$ with a smooth conic spanning $\mathbb P(\ker\Psi_{p,q;r,t})$. The first assertion follows from Corollary~\ref{cor:diagonal-separation}. The generalized Choi families cited in the preceding paragraph satisfy the diagonal type hypothesis, so the final assertion follows as well.}
\end{proof}

Osaka's March 2026 VIASM lectures on detecting entanglement by positive linear maps provide background for this discussion \cite{OsakaLectures2026}. Here we compare maps through explicitly defined equivalence relations. Formula~\eqref{eq:Psi} is itself a specialization of the merging construction \cite{MarciniakRutkowski2017} followed by the positive diagonal output congruence described above. The precise separation results are the determinantal separation in Corollary~\ref{cor:choi-separation} and the \chg{separation under local congruence and transposition} in Proposition~\ref{prop:rank-separation}. \newrev{This fixes the scope of our comparison with the related positive map and entanglement witness constructions studied in} \cite{ChoKyeLee1992,Ha1998,SenguptaArvind2011,SenguptaArvind2013}.

Two related questions lie beyond the results of this paper. The question whether every $2$-positive map on $M_4(\mathbb C)$ is decomposable extends the $M_3(\mathbb C)$ theorem of Yang, Leung, and Tang \cite{YangLeungTang2016}. Our $M_4$ Grassmannian result concerns \chg{rank one} geometry of \chg{three dimensional} domain kernels, whereas $2$-positivity is a \chg{condition involving vectors of Schmidt rank at most two} on the Choi matrix; moreover Corollary~\ref{cor:2positive} shows that the family \eqref{eq:Psi} cannot produce a counterexample. Likewise, a \chg{classification based on ranges} of PPT edge states in $M_4(\mathbb C)\otimes M_4(\mathbb C)$, motivated by the $3\otimes3$ classification in \cite{KyeOsaka2012}, requires compatible information for a state and its partial transpose. Choi polynomial methods give a complementary route to PPT entanglement and edge state questions \cite{HoLeLeOsaka2026}. The present work classifies one kernel scheme at a time and therefore supplies geometric input rather than such a classification.

Recent \chg{exact threshold} results for sparse qutrit maps \chg{\cite{PoderiniEtAl2026}} \newrev{and the closely related anisotropic phase geometry in \cite{Sacchi2026}} make this distinction useful: sparse Choi structure can itself lead to sharp semialgebraic phase boundaries. \newrev{What is specific here is that an exact boundary is placed on the smooth conic component singled out in advance by the determinantal classification.} In normalized coordinates $(R,T)$, the resulting picture is the unit positivity square cut by the decomposability quarter circle $R^2+T^2=1$. Corollary~\ref{cor:adapted-PPT} further shows that the atomic side of this geometric boundary comes equipped with a canonical \chg{PPT entangled family of birank $(5,5)$}.

The fixed rational certificate also gives a uniform construction of PPT entangled states on $\mathbb C^m\otimes\mathbb C^n$ for $n\ge m\ge3$.

\begin{corollary}\label{cor:full-rank-rational}
Let $n\ge m\ge3$. There exists a \chg{full rank} PPT entangled state on $\mathbb C^m\otimes\mathbb C^n$ with rational matrix entries. Let $W=C_{\Psi_{1,1;4/5,4/5}}$, let $\rho$ be the integral PPT operator in \eqref{eq:rho-fixed}, choose rational $0<\delta<1/25$ and rational $\eta>0$, and let
\[
J:\mathbb C^3\otimes\mathbb C^3\longrightarrow\mathbb C^m\otimes\mathbb C^n
\]
be the standard coordinate isometry. Then
\[
R_{m,n}=J(\rho+\delta I_9)J^*+\eta(I_{mn}-JJ^*)
\]
is positive definite, $R_{m,n}^\Gamma$ is positive definite, and $R_{m,n}/\Tr R_{m,n}$ is entangled.
\end{corollary}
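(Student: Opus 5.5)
The plan is to check three things for $R_{m,n}$ separately: positive definiteness of $R_{m,n}$, positive definiteness of $R_{m,n}^\Gamma$, and entanglement. The first two come from block structure. Entanglement comes from pairing with the block-positive witness $W$ pulled up along $J$, using Proposition~\ref{prop:fixed-certificate}.

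\emph{Positivity of $R_{m,n}$.} The isometry $J$ is the tensor product $J=J_1\otimes J_2$ of the coordinate inclusions $J_1:\mathbb C^3\to\mathbb C^m$ and $J_2:\mathbb C^3\to\mathbb C^n$. Hence $JJ^*$ is the diagonal coordinate projection onto $\Span\{e_i\otimes e_j:i,j\le3\}$, and $I_{mn}-JJ^*$ is the complementary diagonal projection. So $R_{m,n}$ is block diagonal with respect to this orthogonal coordinate splitting. Its blocks are $\rho+\delta I_9\succeq\delta I_9\succ0$ (because $\rho\succeq0$) and $\eta I\succ0$, so $R_{m,n}\succ0$.

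\emph{Positivity of $R_{m,n}^\Gamma$.} Partial transposition on the second factor commutes with conjugation by $J_1\otimes J_2$, since $J_2$ is a real coordinate inclusion. This gives $(J X J^*)^\Gamma=J X^\Gamma J^*$. The projection $I_{mn}-JJ^*$ is diagonal in the product basis, so it is fixed by $\Gamma$. Therefore
\[
R_{m,n}^\Gamma=J(\rho^\Gamma+\delta I_9)J^*+\eta(I_{mn}-JJ^*).
\]
The displayed decomposition of $\rho^\Gamma$ shows $\rho^\Gamma\succeq0$, so the same block argument gives $R_{m,n}^\Gamma\succ0$.

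\emph{Entanglement.} Put $W'=JWJ^*$. For product vectors we have $J^*(x\otimes y)=(J_1^*x)\otimes(J_2^*y)$, which is again a product vector. Since $\Psi_{1,1;4/5,4/5}$ is positive by Theorem~\ref{thm:exact-positivity} (because $(4/5)^2<1$), $W$ is block positive, and therefore so is $W'$. Consequently $\Tr(W'\sigma)\ge0$ for every separable $\sigma$.

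Now compute the pairing, using $J^*J=I_9$ and $J^*(I_{mn}-JJ^*)J=0$:
\[
\Tr(W'R_{m,n})=\Tr\bigl(W(\rho+\delta I_9)\bigr)=\Tr(W\rho)+\delta\Tr W.
\]
Proposition~\ref{prop:fixed-certificate} gives $\Tr(W\rho)=2+1-\tfrac{16}{5}=-\tfrac15$. The displayed Choi matrix gives $\Tr W=4p+q=5$. Hence
\[
\Tr(W'R_{m,n})=5\delta-\tfrac15<0 \quad\text{exactly when } \delta<\tfrac1{25}.
\]
So $R_{m,n}/\Tr R_{m,n}$ is entangled. Together with the two positivity statements, it is a full rank PPT entangled state.

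\emph{Rationality.} The entries of $\rho$ are integers, and $\delta$, $\eta$ are rational. The trace $\Tr R_{m,n}=7+9\delta+\eta(mn-9)$ is a positive rational, so the normalized state has rational entries.

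The only step needing care is the compatibility of $J$ with partial transposition and with product vectors. Both follow because $J$ is a tensor product of real coordinate inclusions; the rest is bookkeeping.
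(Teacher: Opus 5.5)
Your proposal is correct and follows essentially the same route as the paper. You get positive definiteness of $R_{m,n}$ and $R_{m,n}^\Gamma$ from the real tensor-product structure of $J$, block positivity of $JWJ^*$ via product vectors, and entanglement from $\Tr(JWJ^*R_{m,n})=\Tr(W\rho)+\delta\Tr W=-\tfrac15+5\delta<0$ using Proposition~\ref{prop:fixed-certificate}; your explicit computation $\Tr R_{m,n}=7+9\delta+\eta(mn-9)$ for the rationality claim is a detail the paper leaves implicit.
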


\begin{proof}
By Proposition~\ref{prop:fixed-certificate}, $\Tr(W\rho)=-1/5$ and $\Tr W=5$, so
\[
\Tr\bigl(W(\rho+\delta I_9)\bigr)=-\frac15+5\delta<0.
\]
The operators $\rho+\delta I_9$ and its partial transpose are positive definite. Write $J=J_A\otimes J_B$, where $J_A,J_B$ are the standard coordinate isometries. Their entries are real, so
\[
(JXJ^*)^\Gamma=JX^\Gamma J^*
\]
for every $X\in M_9$. For the standard coordinate embedding, $JJ^*$ is a sum of product coordinate projections and is fixed by partial transpose; hence $R_{m,n}$ and $R_{m,n}^\Gamma$ are positive definite. \newrev{Moreover, for every product vector $x\otimes y$,}
\[
\newrev{\langle x\otimes y,JWJ^*(x\otimes y)\rangle=\langle J_A^*x\otimes J_B^*y,W(J_A^*x\otimes J_B^*y)\rangle\ge0,}
\]
\newrev{so $JWJ^*$ is block positive.} Therefore
\[
\Tr(JWJ^*R_{m,n})=\Tr\bigl(W(\rho+\delta I_9)\bigr)<0.
\]
Thus $R_{m,n}$ is entangled. Rational $\delta$ and $\eta$ give rational entries.
\end{proof}

This construction uses a $3\times3$ corner embedding with positive padding. It gives explicit \chg{full rank} PPT entangled states with rational entries in every pair of dimensions $n\ge m\ge3$. \chg{This is an explicit rational construction rather than a claim that the existence of full rank PPT entangled states is new.} The main geometric results and the phase diagram of Section~\ref{sec:positive-maps} are independent of this construction. For future realizability questions under stronger positivity assumptions, the recent characterizations and generation methods in \cite{vomEndeKhatriDenisov2025} provide a complementary \chg{operator theoretic} viewpoint.

\medskip
\noindent{\textbf{Conclusion.}
The paper establishes a geometric classification of \chg{rank one} determinantal kernel schemes for matrix nets and connects one of the resulting strata with an exact positive map phase diagram. In arbitrary rectangular size, every \chg{positive dimensional} \chg{rank one} section is one of the six schemes in Theorem~\ref{thm:main-positive-dimensional}, and for $a,b\ge3$ these sections form exactly three irreducible Grassmannian components with explicitly determined dimensions and intersections. In $M_3$, the adjugate identity gives the sharp bound $\operatorname{length}Z_K\le3$ and leads to a complete list of finite scheme types and Hilbert polynomials. On the operator side, the merging slice $\Psi_{p,q;r,t}$ realizes the smooth conic stratum {for $r,t>0$} and has exact positivity, decomposability, and atomicity boundaries; the atomic region is separated by explicit PPT states with controlled birank and Schmidt number. Natural next questions are to study higher determinantal kernel profiles $K_r(L)$, matrix spaces of dimension greater than three, and the finite \chg{rank one} geometry of nets in larger square matrix spaces. A second direction is to determine which determinantal strata can occur under $k$-positivity or other positivity constraints, where the relevant Choi geometry involves Schmidt rank at most $k$ rather than \chg{rank one} alone. These problems may provide a bridge between the projective geometry of determinantal sections and structural questions on decomposable, atomic, and higher positive maps. \chg{The comparison with the recent sparse qutrit phase diagrams in \newrev{\cite{PoderiniEtAl2026,Sacchi2026}} also suggests asking which exact semialgebraic boundary phenomena are forced by a determinantal kernel stratum and which arise from sparsity alone.}}

\end{document}